\def\ps@pprintTitle{%
 \let\@oddhead\@empty
 \let\@evenhead\@empty
 \def\@oddfoot{\centerline{\thepage}}%
 \let\@evenfoot\@oddfoot}
\def\FF{\mathbb F}
\def\rank{\mathop{\mathrm {rank}}\nolimits}
\def\tr{\mathop{\top}\nolimits}
\def\Tr{\mathop{\mathrm{Tr}}\nolimits}
\def\sgl{{\mathop{SGL}\nolimits}_n(\mathbb{F}_{2})}
\def\ronetzero{\mathcal{R}_1^{\Tr_0}}
\def\diam{\mathop{\mathrm {diam}}\nolimits}
\theoremstyle{definition}
\newtheorem{defi}{Definition}[section]
\newtheorem{exa}[defi]{Example}
\theoremstyle{plain}
\newtheorem{thm}[defi]{Theorem}
\newtheorem{lemma}[defi]{Lemma}
\newtheorem{cor}[defi]{Corollary}
\newtheorem{prop}[defi]{Proposition}
\theoremstyle{remark}
\newtheorem{remark}[defi]{Remark}
\newcounter{myenumi}
\newsavebox{\cmm}
\savebox{\cmm}{\indent}
\newenvironment{myenumerate}[1]{
\begin{list}{
{\bf #1~\themyenumi}. } {\labelwidth=0pt
\labelsep=0pt\leftmargin=0pt\usecounter{myenumi}} }{\end{list}}
\newcounter{myenumiB}
\newenvironment{myenumerateB}[1]{
\begin{list}{
{\emph #1~\themyenumiB}. } {\labelwidth=0pt
\labelsep=0pt\leftmargin=0pt\usecounter{myenumiB}} }{\end{list}}
\begin{document}

\begin{frontmatter}

\title{The distance function on Coxeter-like graphs and self-dual codes}

\author[labelFAMNIT,labelIAM,labelIMFM]{Marko Orel\corref{cor1}}
\ead{marko.orel@upr.si}
\author[labelIAM]{Dra\v{z}enka Vi\v{s}nji\'{c}}
\ead{drazenka.visnjic@iam.upr.si}

\address[labelFAMNIT]{University of Primorska, FAMNIT, Glagolja\v{s}ka 8, 6000 Koper, Slovenia}
\address[labelIAM]{University of Primorska, IAM, Muzejski trg 2, 6000 Koper, Slovenia}
\address[labelIMFM]{IMFM, Jadranska 19, 1000 Ljubljana, Slovenia}

\cortext[cor1]{Corresponding author}

\begin{abstract}
Let $\sgl$ be the set of all invertible $n\times n$ symmetric matrices over the binary field $\FF_2$.
Let $\Gamma_n$ be the graph with the vertex set $\sgl$ where a pair of matrices $\{A,B\}$ form an edge if and only if $\rank(A-B)=1$. In particular, $\Gamma_3$ is the well-known Coxeter graph. The distance function $d(A,B)$ in $\Gamma_n$ is described for all matrices $A,B\in\sgl$. The diameter of $\Gamma_n$ is computed. For odd $n\geq 3$, it is shown that each matrix $A\in\sgl$ such that $d(A,I)=\frac{n+5}{2}$ and $\rank(A-I)=\frac{n+1}{2}$ where $I$ is the identity matrix induces a self-dual code in $\FF_2^{n+1}$. Conversely, each self-dual code $C$ induces a family ${\cal F}_C$ of such matrices $A$. The families given by distinct self-dual codes are disjoint. The identification $C\leftrightarrow {\cal F}_C$ provides a graph theoretical description of self-dual codes. A result of Janusz (2007) is reproved and strengthened by showing that the orthogonal group ${\cal O}_n(\FF_2)$ acts transitively on the set of all self-dual codes in $\FF_2^{n+1}$.
\end{abstract}

\begin{keyword}
Coxeter graph \sep invertible symmetric matrices \sep binary field \sep rank \sep distance in graphs \sep alternate matrices \sep self-dual codes

\MSC[2020] 15B33\sep 05C12\sep 94B05 \sep 15A03\sep 05C50 \sep 15B57

\end{keyword}

\end{frontmatter}

\section{Introduction}

Vector spaces formed by matrices/bilinear forms over a (finite) field, equipped with the metric $(A,B)\mapsto \rank(A-B)$, are studied in multiple research areas. In algebraic combinatorics, they are investigated within association schemes and distance-regular graphs~\cite{bannai,BCN,ivanov1,schmidt1}. In coding theory, they appear in the context of rank-metric codes~\cite{bence,delsarte,gabidulin,ravagnani,sheekey,silva}. In matrix theory, they are explored in preserver problems \cite{PazzisSemrl,HuangPravokotne,HuangSemrl2,semrl,wan}. This research field mixes also with the study of graph homomorphisms and cores~\cite{FFA,JACO,NOVA}. Often, the full set of matrices is replaced by a subset of hermitian, alternate, or symmetric matrices \cite{pavese,gabidulin2,Lavrauw,HuangAlter,HuangSemrl,FFA,JACO}. If we consider the graph $\Gamma$ with the vertex set formed by all rectangular $m\times n$ matrices with coefficients from a finite field~$\FF_q$ where a pair $\{A,B\}$ of matrices form an edge if and only if $\rank(A-B)=1$, then it is well known and easy to see that the distance in this graph equals $d_{\Gamma}(A,B)=\rank(A-B)$ for all matrices $A,B$ (cf.~\cite[Proposition~3.5]{wan}). Similar claims are true for analogous graphs formed by hermitian, alternate, or symmetric matrices (cf.~\cite{wan}). In all these cases, the simplicity of providing a distance formula for $d_{\Gamma}(A,B)$ lies in the fact that the vertex sets are vector spaces. The distance function as well as many other properties of the corresponding subgraphs, which are induced by invertible matrices, are much more difficult to investigate because the vertex sets are not closed under the addition. The only properties/results about these subgraphs we are aware of are contained in the papers \cite{LAA2016I,LAA2016II,E-JC} and in the survey~\cite[Examples~3.11-3.18]{NOVA}.

In this paper, we focus on graph $\Gamma_n$ where the vertex set is $\sgl$, i.e. the set of all $n\times n$ invertible binary symmetric matrices, and where $\{A,B\}$ is an edge if and only if $\rank(A-B)=1$. There are multiple reasons to focus on graph $\Gamma_n$. As observed by the first author in~\cite{E-JC}, it generalizes the well-known Coxeter graph~\cite{coxeter}, which is obtained if $n=3$ (see Figure~\ref{slika}).
\begin{figure}\label{slika}
\centering
\includegraphics[width=\textwidth]{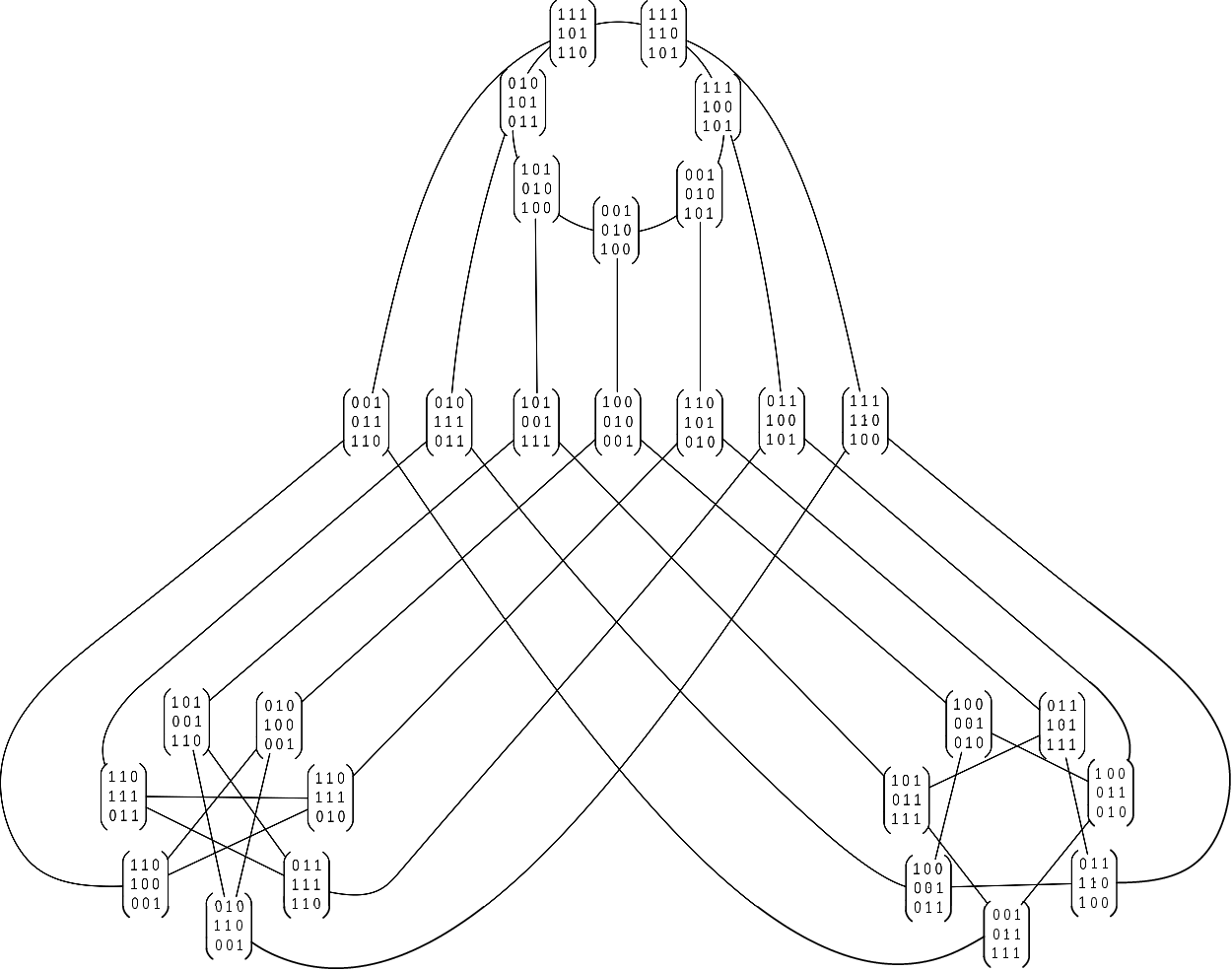}
\caption{Graph $\Gamma_3$ is the Coxeter graph.}
\end{figure}
Similarly, the graph obtained by the set $HGL_n(\FF_4)$ of all $n\times n$ invertible hermitian matrices over $\FF_4$ generalizes the well-known Petersen graph. These two graphs are very important as they are among the five (currently) known vertex-transitive graphs that do not have a Hamiltonian cycle. Hamiltonicity of vertex-transitive graphs is an active research area with a long tradition (see for example~\cite{babai-handbook,bermond,DMM,draganklavdija2009,lovasz-problem,marusic1983b} and the references therein). In~\cite{E-JC}, it was posed as an open problem if the graph~$\Gamma_n$ (and the corresponding graph formed by $HGL_n(\FF_4)$) has a Hamiltonian cycle. Here, we do not solve this problem. However, all of the above indicate that the infinite family of graphs $\Gamma_n$ $(n\in\mathbb{N})$ should be studied extensively. Some properties of graphs $\Gamma_n$ were obtained already in~\cite{E-JC}. In particular, it was shown that these graphs are cores for $n\geq 3$, that is, all their endomorphisms are automorphisms. In this paper, we describe the distance function $d(A,B):=d_{\Gamma_n}(A,B)$ in $\Gamma_n$ and hope to report about a similar result on $HGL_n(\FF_4)$ in the future. We also need to mention that it is expected that the computation of the diameter of $\Gamma_n$ (Corollary~\ref{diameter}) will help us to describe all automorphisms of $\Gamma_n$ as well as some related homomorphisms~\cite{inPreparation}.

To understand better the difficulty of the computation of the distance function in the context of invertible matrices, and to provide an additional reason why to focus on the binary field $\FF_2$, consider the graph $\widehat{\Gamma}_n$ with the vertex set formed by the set $S_n(\FF_2)$ of all $n\times n$ binary symmetric matrices where edges are defined in the same way as for the graph $\Gamma_n$. It is well known that the distance in $\widehat{\Gamma}_n$ is given by $d_{\widehat{\Gamma}_n}(A,B)=\rank(A-B)$ if $A-B$ is nonalternate or zero, and $d_{\widehat{\Gamma}_n}(A,B)=\rank(A-B)+1$ if $A-B$ is a nonzero alternate matrix (see~\cite[Proposition~5.5]{wan}). In fact, if $A-B$ is nonalternate of rank $r$, then $B=A+\sum_{i=1}^r {\bf x}_i{\bf x}_i^{\tr}$ for some linearly independent column vectors ${\bf x}_1,\ldots, {\bf x}_r$. Consequently, if $C_j=A+\sum_{i=1}^j {\bf x}_i{\bf x}_i^{\tr}$ for $j\in\{0,1,\ldots,r\}$, then $A=C_0,C_1,\ldots,C_r=B$ is a path of length $r$ between $A$ and $B$. Similarly we can find a path of length $r+1$ if $A-B$ is alternate of rank $r>0$ (see Lemma~\ref{alternate-canonical}). It is easy to see that there are no shorter paths of this kind. Now, if we consider the case $A,B\in \sgl$, then it is certainly possible that some of the matrices $C_j$ are not invertible. Hence, some distances in graph $\Gamma_n$, which is induced by the set $\sgl$, could be larger than $d_{\widehat{\Gamma}_n}(A,B)$. This seems quite likely (and turns out to be true) because the proportion of the invertible symmetric matrices $\frac{|\sgl|}{|S_n(\FF_2)|}$ is quite low (see Table~\ref{tabela}; value $|\sgl|$ can be found in~\cite[Lemma~9.5.9]{BCN} or~\cite{E-JC}).
\begin{table}\label{tabela}
\begin{center}
\renewcommand{\arraystretch}{1.4}
\begin{tabular}{|c|c|c|c|c|c|c|c|}
  \hline
  $n$ & 2& 3&  4 &5 & 6 & 7\\
  \hline
 $\frac{|\sgl|}{|S_n(\FF_2)|}\doteq $ & 0.5& 0.4375 &  0.4238& 0.4205 & 0.4197 & 0.4195\\
  \hline
 \end{tabular}
 \caption{$\lim_{n\to\infty}\frac{|\sgl|}{|S_n(\FF_2)|}=\prod_{i=1}^{\infty}\left(1-\frac{1}{2^{2i-1}}\right)=0.4194224417951075\ldots$}
\end{center}
\end{table}
This fact provides us another reason to focus on the binary field $\FF_2$ because the corresponding proportions over larger finite fields $\FF_q$ are much higher and converge to 1 as $q\to\infty$.

Lastly, we mention that the investigation of $\Gamma_n$ represent a novel graph theoretical technique to study linear codes, self-dual codes in particular. In fact, we show that for odd $n$ a certain subset of matrices in $\sgl$, which is defined by the distance and the rank function in $\Gamma_n$, correspond to self-dual codes in $\FF_2^{n+1}$ and vice versa. Moreover, the linear codes that are self-dual are completely determined by two graph parameters, namely $d(A,I)$ and $d_{\widehat{\Gamma}_n}(A,I)$ where $I$ is the identity matrix and $A\in \sgl$ is a matrix associated to the code. A well-known and intractable open problem in coding theory is to determine the number of permutation inequivalent self-dual codes in $\FF_2^{n+1}$. Its value for small $n$ and its asymptotical behavior are determined in \cite[Table~V]{conway1}\footnote{The value for $n+1=30$ is corrected in~\cite{conway2}.} and~\cite{hou}, respectively.  In~\cite[Theorem~10]{janusz}, Janusz proved that all self-dual codes are `orthogonally equivalent'. As a corollary, he was able to show that the enumeration of permutation inequivalent self-dual codes in $\FF_2^{n+1}$ is equivalent to the enumeration of certain double cosets in the orthogonal group~${\cal O}_{n+1}(\FF_2)$. Our technique enable us to improve~\cite[Theorem~10]{janusz} by showing that already the group ${\cal O}_n(\FF_2)$ acts transitively on the set of all self-dual codes in $\FF_2^{n+1}$.

The rest of the paper is organized as follows. In Section~\ref{s2}, we describe the necessary notation and state the main results of this paper, i.e. the computation of the distance function $d(A,B)$ (Theorems~\ref{thm-nonalter} and~\ref{thm-alter}). Sections~\ref{s3} and~\ref{s4} include auxiliary lemmas that are needed in the proofs. The results in Section~\ref{s3} could be interesting on their own from perspective of linear algebra. Sections~\ref{s5} and~\ref{s6} contain the proofs of Theorems~\ref{thm-nonalter} and~\ref{thm-alter}, respectively. In Section~\ref{s7}, we compute the diameter of graph $\Gamma_n$ (Corollary~\ref{diameter}). The connection of graph $\Gamma_n$ with self-dual codes is described in Section~\ref{s8} (Theorems~\ref{particija}, \ref{thm-distances}, \ref{thm-delovanjeinverza}, \ref{thm-code}).

\section{Notation and the statements of the main results}
\label{s2}

Throughout the paper, $\FF$ is a field and $\FF_2=\{0,1\}$ is the binary field. Given positive integers $n$ and $m$, $M_{n\times m}(\FF)$ denotes the set of all $n\times m$ matrices with coefficients from the field $\FF$. Similarly, $GL_n(\FF)$ denotes the subset in $M_{n\times n}(\FF)$, which is formed by all invertible matrices. We use $I=I_n$ to denote the identity matrix in $GL_n(\FF)$, while $J=J_{n\times m}$ and $O=O_{n\times m}$ are the matrices in $M_{n\times m}(\FF)$ with all coefficients equal to $1$ and $0$, respectively. The determinant, the rank, the transpose, and the trace of a matrix $A$ are denoted by $\det A$, $\rank A$, $A^{\tr}$, and $\Tr A$, respectively. The elements of $\FF^n:=M_{n\times 1}(\FF)$, i.e. the column vectors, are written in bold style, like ${\bf x}$. In particular, ${\bf j}_n=J_{n\times 1}$, while ${\bf e}_i=(0,\ldots,0,1,0,\ldots,0)^{\tr}\in \FF^n$ where $1$ appears as the $i$-th entry. The linear span of column vectors ${\bf x}_1,\ldots,{\bf x}_r\in \FF^n$ is denoted by $\langle {\bf x}_1,\ldots, {\bf x}_r\rangle$. The subset $\{A\in M_{n\times n}(\FF): A^{\tr}=A\}$ of all symmetric matrices is denoted by $S_n(\FF)$, while $SGL_n(\FF):=S_n(\FF)\cap GL_n(\FF)$ and $\ronetzero:=\{A\in S_n(\FF):\ \rank A=1,\ \Tr A=0\}$. Given a binary column vector ${\bf x}\in \FF_2^n$, let ${\bf x}^2:={\bf x}{\bf x}^{\tr}\in S_n(\FF_2)$ denote the corresponding rank-one matrix. Similarly, for ${\bf x}, {\bf y}\in \FF_2^n$, let ${\bf x}\circ {\bf y}:={\bf x}{\bf y}^{\tr}+{\bf y}{\bf x}^{\tr}\in S_n(\FF_2)$. Observe that ${\bf x}\circ {\bf y}={\bf y}\circ {\bf x}$, ${\bf x}\circ ({\bf y}+{\bf z})={\bf x}\circ {\bf y}+{\bf x}\circ {\bf z}$, and $({\bf x}+{\bf y})^2={\bf x}^2+{\bf y}^2+{\bf x}\circ {\bf y}$ for all ${\bf x},{\bf y},{\bf z}\in \FF_2^n$. Recall that a matrix $A=[a_{ij}]_{i,j\in \{1,\ldots,n\}}\in M_{n\times n}(\FF_2)$ is \emph{alternate} if both $A^{\tr}=A$ and $a_{ii}=0$ for all~$i$, or equivalently ${\bf x}^{\tr}A{\bf x}=0$ for all ${\bf x}\in \FF_2^n$. It is well known that the rank $r$ of an alternate matrix $A\in S_n(\FF_2)$ is even, and $A={\bf y}_1\circ {\bf y}_2+\cdots+{\bf y}_{r-1}\circ {\bf y}_{r}$ for some linearly independent vectors ${\bf y}_1,\ldots, {\bf y}_r\in\FF_2^n$ (cf.~\cite[Proposition~1.34]{wan}). On the other hand, each nonalternate matrix $A\in S_n(\FF_2)$ of rank $r$ can be written as $A=\sum_{i=1}^r {\bf x}_i^{2}$ for some linearly independent ${\bf x}_1,\ldots, {\bf x}_r\in \FF_2^n$ (cf.~\cite[Corollary~1.36]{wan}). In particular, each nonalternate $A\in \sgl$ can be written as $A=PP^{\tr}$ where matrix $P\in GL_n(\FF_2)$ has ${\bf x}_i$ as the $i$-th column.

Recall that a vector subspace $C\subseteq \FF_2^n$ is a (binary) \emph{linear code} of \emph{length} $n$ and \emph{dimension} $\dim C$. It is \emph{self-orthogonal} if $C\subseteq C^{\bot}:=\{{\bf x}\in \FF_2^n : {\bf x}^{\tr}{\bf c}=0\ \textrm{for all}\ {\bf c}\in C\}$ (cf.~\cite{plessknjiga}). In this case, $\dim C\leq \lfloor\frac{n}{2}\rfloor$. If $C=C^{\bot}$, then the code is \emph{self-dual}, $n$ is even, and $\dim C =\frac{n}{2}$.

We use $d(A,B)$ to denote the distance between vertices $A,B\in \sgl$ in graph $\Gamma_n$. We write $A\sim B$ if $d(A,B)=1$. Since $\Gamma_n$ is an induced subgraph in $\widehat{\Gamma}_n$, it follows that $d(A,B)\geq d_{\widehat{\Gamma}_n}(A,B)$ for all $A,B\in\sgl$, that is,
\begin{equation}\label{eq3}
d(A,B)\geq \left\{\begin{array}{ll}\rank(A-B),&\textrm{if}\ A-B\ \textrm{is nonalternate or zero};\\
\rank(A-B)+1,&\textrm{if}\ A-B\ \textrm{is alternate and nonzero}.
\end{array}\right.
\end{equation}
Theorem~\ref{thm-nonalter} describes $d(A,B)$ for all $A,B\in\sgl$ where $A-B$ is an arbitrary nonalternate matrix of rank $r$.
\begin{thm}\label{thm-nonalter}
Let $A,B\in \sgl$ be such that $B-A=\sum_{i=1}^r {\bf x}_i^2$ for some linearly independent column vectors ${\bf x}_1,\ldots,{\bf x}_r\in\FF_2^n$.
\begin{enumerate}
\item\label{nonalter_i} If ${\bf x}_i^{\tr}A^{-1}{\bf x}_i=1$ for all $i$, then
$$d(A,B)=\left\{\begin{array}{ll}r+2,& \textrm{if}\ {\bf x}_i^{\tr}A^{-1}{\bf x}_j=1\ \textrm{for all}\ i,j;\\
r+1, & \textrm{otherwise}.\end{array}\right.$$
\item\label{nonalter_ii} If $[{\bf x}_i^{\tr}A^{-1}{\bf x}_j]_{i,j=1}^r\in \ronetzero$, then $d(A,B)=r+2$.
\item\label{nonalter_iii} Otherwise, $d(A,B)=r$.
\end{enumerate}
\end{thm}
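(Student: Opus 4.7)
The overall strategy is to reduce the problem to a question about the $r\times r$ symmetric binary matrix $M:=[{\bf x}_i^{\tr}A^{-1}{\bf x}_j]_{i,j=1}^r$. Since every rank-one symmetric matrix over $\FF_2$ has the form ${\bf y}^2$, a length-$s$ path $A=C_0,\ldots,C_s=B$ in $\Gamma_n$ amounts to a decomposition $B-A=\sum_{k=1}^s {\bf y}_k^2$ with every partial sum $C_j=A+\sum_{k\le j}{\bf y}_k^2$ lying in $\sgl$. When $s=r$, a rank-and-column-space comparison of $Y'Y'^{\tr}=YY^{\tr}$ forces the ${\bf y}_k$ to be linearly independent and to lie in $\langle {\bf x}_1,\ldots,{\bf x}_r\rangle$; writing ${\bf y}_k=Yq_k$ and stacking the $q_k^{\tr}$ as the rows of a matrix $Q$, the identity $Y Q^{\tr} Q Y^{\tr}=YY^{\tr}$ (plus $Y$ having full column rank) yields $Q\in\mathcal{O}_r(\FF_2)$. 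An iterated matrix determinant lemma then gives $C_j\in\sgl$ iff the leading $j\times j$ principal minor of $I_r+QMQ^{\tr}$ equals $1$. Combined with the trivial bound $d(A,B)\geq r$ from \eqref{eq3}, this reduces the theorem to: for which $M$ can we choose $Q\in\mathcal{O}_r(\FF_2)$ satisfying all $r$ principal-minor conditions?

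For the lower bounds in (i) and (ii), the first-step condition $q_1^{\tr}M q_1=0$ can be rewritten using the $\FF_2$-identity $q_1^{\tr}M q_1=\sum_i m_{ii} q_{1i}$ (off-diagonal contributions appear twice and vanish) as $q_1\perp d$, where $d$ is the diagonal vector of $M$; orthonormality forces $q_1^{\tr}q_1=1$, i.e.\ $q_1$ has odd weight. In case~(i), $d=(1,\ldots,1)^{\tr}$, so the two conditions are incompatible and $d(A,B)\geq r+1$. In case~(ii), $m_{ii}=z_i^2=z_i$ so $d=z$, and a direct induction using the $\FF_2$-Sherman--Morrison formula $C_j^{-1}=A^{-1}+\sum_{k\le j} A^{-1}{\bf y}_k{\bf y}_k^{\tr}A^{-1}$ shows that invertibility of every $C_j$ forces $q_j\perp z$ for all $j$, contradicting that $\{q_j\}$ spans $\FF_2^r$. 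To strengthen to $d(A,B)\geq r+2$ in the subcases $M=J_r$ and $M\in\ronetzero$, the same kind of diagonal/weight analysis has to be extended to length-$(r+1)$ paths, where the linear dependence among the ${\bf y}_k$'s introduces one extra scalar parameter; this parameter is still annihilated in the rigid subcases but not otherwise.

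For the upper bound in case~(iii), where some $m_{ii}=0$ and $M\notin\ronetzero$, I would build $q_1,\ldots,q_r$ inductively: take $q_1={\bf e}_i$ for an index with $m_{ii}=0$, reduce via a Schur-complement-type update to a smaller instance with matrix $\widetilde{M}$ on the orthogonal complement, and verify that $\widetilde{M}$ still avoids the exceptional classes~(i) and~(ii). For the upper bounds $r+1$ in case~(i)-otherwise and $r+2$ in the cases $M=J_r$ and $M\in\ronetzero$, the plan is to prepend one or two auxiliary rank-one steps: choose ${\bf z}$ with ${\bf z}^{\tr}A^{-1}{\bf z}=0$ so that $A':=A+{\bf z}^2\in\sgl$ and such that the matrix associated with $(A',B)$ now falls into case~(iii) or case~(i)-otherwise. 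Existence of such ${\bf z}$ should be supplied by a counting argument for the null vectors of the quadratic form ${\bf x}\mapsto {\bf x}^{\tr}A^{-1}{\bf x}$, presumably one of the auxiliary lemmas in Section~\ref{s3}.

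The main obstacle will be the inductive step in case~(iii): propagating the hypothesis ``$M\notin$ (i) or (ii)'' through the Schur-type reduction is delicate, because over $\FF_2$ the exceptional classes are unusually rigid and can interact with the reduction in subtle ways (for instance, after eliminating ${\bf e}_i$ the new matrix can acquire an all-ones diagonal row even if the original did not). I anticipate the argument to split into subcases based on $\rank M$, $\Tr M$, and the pattern of zero diagonal entries, with careful bookkeeping of which positions to strip off first, leaning on the linear-algebraic auxiliary lemmas from Section~\ref{s3}.
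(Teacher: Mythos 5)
Your framing — reduce everything to the Gram matrix $M=[{\bf x}_i^{\tr}A^{-1}{\bf x}_j]_{i,j=1}^r$, observe that a length-$r$ path forces the step vectors into $\langle {\bf x}_1,\ldots,{\bf x}_r\rangle$ with an orthogonal change of basis $Q$, and read off the lower bound in case~(i) from the parity clash between $q_1^{\tr}q_1=1$ and $q_1\perp\mathrm{diag}(M)={\bf j}_r$ — is sound and matches the mechanism behind the paper's Lemmas on lower bounds. But the proposal stops at the point where the real work begins, and at least two of the steps you defer would fail as described. First, for the upper bounds in cases (i) and (ii) you propose to prepend a step $A'=A+{\bf z}^2$ with ${\bf z}^{\tr}A^{-1}{\bf z}=0$ found ``by a counting argument for the null vectors of the quadratic form.'' This is not enough: since $(A')^{-1}=A^{-1}+A^{-1}{\bf z}{\bf z}^{\tr}A^{-1}$, the new Gram matrix has entries ${\bf x}_i^{\tr}A^{-1}{\bf x}_j+({\bf x}_i^{\tr}A^{-1}{\bf z})({\bf z}^{\tr}A^{-1}{\bf x}_j)$, so whether $(A',B)$ lands in case (iii) depends on the whole vector of inner products ${\bf z}^{\tr}A^{-1}{\bf x}_i$, not just on ${\bf z}^{\tr}A^{-1}{\bf z}$. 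The paper has to invoke a Witt extension theorem for nonalternate symmetric forms in characteristic two (Lemma~\ref{witt-osnovna}, via Lemmas~\ref{witt1} and~\ref{witt2}) to produce a ${\bf w}$ with \emph{all} these inner products prescribed, and this is only possible after verifying dimension constraints such as $n\geq 5$ in the $r=3$ subcase; moreover, in the ``(i) but not all ones'' subcase the paper does not prepend an outside vector at all but recombines inside $\langle{\bf x}_1,\ldots,{\bf x}_r\rangle$ into $r+1$ dependent squares and uses a Laplace-expansion parity identity $1=\sum_i d_{ii}D_{ii}$ to certify that some intermediate vertex is invertible — an idea with no counterpart in your plan.

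Second, the case~(iii) upper bound, which you correctly identify as the main obstacle, is the bulk of the paper's proof and your proposed route through it is doubtful. A ``Schur-complement-type update on the orthogonal complement'' of $q_1={\bf e}_i$ is exactly the kind of step that degenerates over $\FF_2$: the complement of an isotropic vector contains the vector itself, the restricted form can become alternate or drop rank, and (as you note) the reduced matrix can re-enter the exceptional classes. The paper instead runs a global induction on $r$, using Lemma~\ref{lemma7} to reorder the decomposition so that two consecutive truncations $A+\sum_{i\le r-2}{\bf y}_i^2$ and $A+\sum_{i\le r-1}{\bf y}_i^2$ are invertible, and then a three-way case analysis on the type of $(A,B')$ with further sub-subcases (including a delicate argument around the vector $(b_1,\ldots,b_r)$ and Lemma~\ref{lemica}); nothing in your sketch substitutes for this. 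Two smaller issues: your iterated update formula $C_j^{-1}=A^{-1}+\sum_{k\le j}A^{-1}{\bf y}_k{\bf y}_k^{\tr}A^{-1}$ is wrong because each Sherman--Morrison step must use $C_{k-1}^{-1}$, not $A^{-1}$, so cross terms appear (the paper avoids this by working with determinants of $I_s+[{\bf y}_i^{\tr}A^{-1}{\bf y}_j]$ instead); and the extension of the case~(ii) lower bound to length-$(r+1)$ paths is asserted in one sentence but is precisely where Lemma~\ref{lemma6} has to work with a spanning but dependent family ${\bf y}_1,\ldots,{\bf y}_{r+1}$. As it stands the proposal is a plausible roadmap with the correct lower-bound mechanism, but the constructions that establish every upper bound are missing or rest on steps that do not go through.
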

\begin{remark}\label{opomba2}
Observe that cases (i), (ii), (iii) are all distinct, except if $r$ is even and ${\bf x}_i^{\tr}A^{-1}{\bf x}_j=1$ for all $i,j$. In this case $A,B$ fit the assumptions in (i) and~(ii).
\end{remark}

Theorem~\ref{thm-alter} describes $d(A,B)$ for all $A,B\in\sgl$ where $A-B$ is an arbitrary alternate matrix of rank $r>0$ (see Lemma~\ref{alternate-canonical}).
\begin{thm}\label{thm-alter}
Let $A,B\in \sgl$ be such that $B-A=\sum_{i=1}^{r+1} {\bf x}_i^2$ for some linearly independent ${\bf x}_1,\ldots,{\bf x}_r\in\FF_2^n$ where $r>0$ is even and ${\bf x}_{r+1}={\sum}_{i=1}^r {\bf x}_i$.
\begin{enumerate}
\item\label{alter_i} If $[{\bf x}_i^{\tr} A^{-1}{\bf x}_j]_{i,j=1}^{r+1}$ is not of rank one, then $d(A,B)=r+1$.
\item\label{alter_ii} If $[{\bf x}_i^{\tr} A^{-1}{\bf x}_j]_{i,j=1}^{r+1}$ is of rank one, then $d(A,B)=r+2$.
\end{enumerate}
\end{thm}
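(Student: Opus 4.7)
The lower bound $d(A,B)\ge r+1$ is immediate from (3) since $B-A$ is a nonzero alternate matrix of rank $r$. The nontrivial work is (a) proving $d(A,B)\ge r+2$ in case (ii) and (b) constructing explicit paths of the claimed lengths. I would set $\tilde X=[{\bf x}_1,\ldots,{\bf x}_r]$, $U=\langle{\bf x}_1,\ldots,{\bf x}_r\rangle$, and $\tilde M=\tilde X^{\tr}A^{-1}\tilde X$, and work throughout with $\tilde M$, using $\rank M=\rank\tilde M$ and $B-A=\tilde X(I_r+J_{r\times r})\tilde X^{\tr}$ with $I_r+J_{r\times r}$ invertible over $\FF_2$ because $r$ is even.

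For the case (ii) lower bound (the heart of the argument) I argue by contradiction. Assume a path $A=C_0\sim\cdots\sim C_{r+1}=B$ exists, with $C_j-C_{j-1}={\bf y}_j{\bf y}_j^{\tr}$. A kernel-dimension count applied to the identity $YY^{\tr}=XX^{\tr}$ (of rank $r$), through $\ker(YY^{\tr})=\{{\bf v}:Y^{\tr}{\bf v}\in\ker Y\}$, rules out $\rank Y=r+1$ and forces $\rank Y=r$ with $\mathrm{col}\,Y=U$. Hence each ${\bf y}_j=\tilde X{\bf w}_j$, and $WW^{\tr}=I+J$ with $\rank W=r$. Writing $\tilde M={\bf a}{\bf a}^{\tr}$ (the rank-one assumption), the Sherman--Morrison update for $\tilde M_j=\tilde X^{\tr}C_j^{-1}\tilde X$ reads $\tilde M_j=\tilde M_{j-1}+(\tilde M_{j-1}{\bf w}_j)(\tilde M_{j-1}{\bf w}_j)^{\tr}$, while invertibility requires ${\bf w}_j^{\tr}\tilde M_{j-1}{\bf w}_j=0$. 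For $\tilde M_0={\bf a}{\bf a}^{\tr}$ this is $({\bf a}^{\tr}{\bf w}_1)^2=0$, hence ${\bf a}^{\tr}{\bf w}_1=0$, and then $\tilde M_1=\tilde M_0$. Inductively $\tilde M_j=\tilde M$ and ${\bf a}^{\tr}{\bf w}_j=0$ for every $j$, so every ${\bf w}_j$ lies in the hyperplane ${\bf a}^{\perp}\subset\FF_2^r$. This forces $\rank W\le r-1$, contradicting $\rank W=r$.

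The upper bounds are constructive. In case (i) with $\tilde M=0$, the naive path $A,A+{\bf x}_1^{2},\ldots,B$ has all intermediate matrices invertible by a direct Sherman--Morrison induction (the condition ${\bf x}_{j+1}^{\tr}C_j^{-1}{\bf x}_{j+1}=0$ propagates from $\tilde M=0$). In case (i) with $\rank\tilde M\ge 2$ I pick a nonzero ${\bf w}\in\FF_2^r$ with ${\bf w}^{\tr}\tilde M{\bf w}=0$ (such ${\bf w}$ exists because ${\bf w}\mapsto\sum_i\tilde M_{ii}w_i$ is a linear form on $\FF_2^r$), set $A'=A+(\tilde X{\bf w})^{2}\in\sgl$, and check that $B-A'=\tilde X(I+J+{\bf w}{\bf w}^{\tr})\tilde X^{\tr}$ is nonalternate of rank $r$ (using $(I+J)^{2}=I$ and the identity ${\bf u}^{\tr}(I+J){\bf u}=0$ on $\FF_2^r$). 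A short case check on the Gram matrix of a suitable decomposition places $(A',B)$ in Theorem~\ref{thm-nonalter}(iii), giving $d(A',B)=r$ and hence $d(A,B)\le r+1$. For the case (ii) upper bound, I use the identity $B-A={\bf z}^{2}+\sum_{i=1}^{r+1}({\bf x}_i+{\bf z})^{2}$, valid for any ${\bf z}$ because $\sum_{i=1}^{r+1}{\bf x}_i=0$ and $r+1$ is odd, choosing ${\bf z}\notin U$ with ${\bf z}^{\tr}A^{-1}{\bf z}=0$; after the first step $A\to A+{\bf z}^{2}$ the rigidity mechanism of the previous paragraph breaks (as ${\bf z}\notin U$), and a length-$(r+1)$ continuation analogous to case (i) completes the path.

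I expect the rigidity argument in the second paragraph to be the main obstacle: the preservation $\tilde M_j=\tilde M$ under all valid steps, combined with the hyperplane obstruction for $W$, is the conceptual core of the theorem. The remaining upper-bound constructions are then a mixture of Sherman--Morrison bookkeeping and reduction to Theorem~\ref{thm-nonalter}.
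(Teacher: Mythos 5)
Your rigidity argument for the lower bound in case~\eqref{alter_ii} is correct and is genuinely different from the paper's. The paper's Step~1 only inspects the last edge of a hypothetical geodesic of length $r+1$: it shows the penultimate vertex $B'$ satisfies $B'-A=\sum_{i=1}^r{\bf w}_i^2$ with $[{\bf w}_i^{\tr}A^{-1}{\bf w}_j]\in\ronetzero$ (via Lemmas~\ref{l1} and~\ref{l3}) and then invokes the already-proved Theorem~\ref{thm-nonalter}~\eqref{nonalter_ii} to get $d(A,B')=r+2$. You instead track the whole path, show every step vector lies in $U=\langle {\bf x}_1,\ldots,{\bf x}_r\rangle$, and prove that $\widetilde{M}_j=\widetilde{X}^{\tr}C_j^{-1}\widetilde{X}$ is constant along the path while every coefficient vector ${\bf w}_j$ is trapped in the hyperplane ${\bf a}^{\perp}$, contradicting $\rank W=r$. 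I checked each step; this is a valid, self-contained alternative that does not lean on Theorem~\ref{thm-nonalter} at all.

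The upper bounds, however, have genuine gaps, located exactly where you wrote ``a short case check.'' In case~\eqref{alter_i} with $\rank\widetilde{M}\geq 2$, it is not true that every admissible ${\bf w}$ puts $(A',B)$ into Theorem~\ref{thm-nonalter}~\eqref{nonalter_iii}: condition~\eqref{nonalter_ii} is excluded by your rank computation, but condition~\eqref{nonalter_i} can hold. Writing $N=I+J+{\bf w}{\bf w}^{\tr}$ and ${\bf d}'=\mathrm{diag}(\widetilde{M}')$, one checks that \emph{every} decomposition of $B-A'$ has all diagonal Gram entries equal to $1$ precisely when $N{\bf d}'=\mathrm{diag}(N)$. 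This happens: take $n\geq 6$, $A=I_n$, ${\bf x}_1={\bf e}_1$, ${\bf x}_2={\bf e}_1+{\bf e}_2+{\bf e}_3$, ${\bf x}_3={\bf e}_1+{\bf e}_4$, ${\bf x}_4={\bf e}_1+{\bf e}_4+{\bf e}_5+{\bf e}_6$ (so $\widetilde{M}$ has two all-ones rows and two rows $(1,1,0,0)$, rank $2$, and $B\in\sgl$); the admissible choice ${\bf w}=(1,1,0,0)^{\tr}$ gives $N{\bf d}'=\mathrm{diag}(N)=(1,1,0,0)^{\tr}$, so $(A',B)$ satisfies condition~\eqref{nonalter_i}, $d(A',B)\geq r+1$ by Lemma~\ref{lemma66}, and your path has length at least $r+2$. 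A good ${\bf w}$ does exist here (e.g.\ $(0,0,1,1)^{\tr}$), but proving that a good choice always exists is the real content of the paper's argument (the Laplace-expansion identity $1=\sum_i d_{ii}D_{ii}$ producing an index with $d_{ii}=1=D_{ii}$ after first normalizing $|S_0|\geq 2$), and it is absent from yours.

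The case~\eqref{alter_ii} upper bound has the same defect. With your normalization ${\bf z}^{\tr}A^{-1}{\bf z}=0$, the Gram matrix of $(A+{\bf z}^2,B)$ with respect to ${\bf x}_1+{\bf z},\ldots,{\bf x}_{r+1}+{\bf z}$ equals ${\bf a}{\bf a}^{\tr}+({\bf c}+{\bf j})({\bf c}+{\bf j})^{\tr}+{\bf j}{\bf j}^{\tr}$ where $c_i={\bf x}_i^{\tr}A^{-1}{\bf z}$, and its trace is always $0$. If ${\bf c}=0$ (which your conditions ${\bf z}\notin U$, ${\bf z}^{\tr}A^{-1}{\bf z}=0$ do not exclude, and which is realizable for $n$ large relative to $r$), this Gram matrix is ${\bf a}{\bf a}^{\tr}\in\ronetzero$, so $(A+{\bf z}^2,B)$ lands in Theorem~\ref{thm-nonalter}~\eqref{nonalter_ii} and $d(A+{\bf z}^2,B)=r+3$. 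You must additionally arrange ${\bf c}\neq 0$ and prove such a ${\bf z}$ exists; this is what the paper's Step~2 (nonalternateness of $B^{-1}-A^{-1}$) is for. Note also that the paper perturbs $B$ rather than $A$ and deliberately arranges ${\bf v}^{\tr}A^{-1}{\bf v}=1$, so that the resulting trace is $1$ and membership in $\ronetzero$ is excluded for free --- an option your choice of sign forecloses.
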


\begin{remark}\label{opomba1}
Observe that ${\bf x}_{r+1}={\sum}_{i=1}^r {\bf x}_i$ implies that $\Tr([{\bf x}_i^{\tr} A^{-1}{\bf x}_j]_{i,j=1}^{r+1})=0$ and $\rank([{\bf x}_i^{\tr} A^{-1}{\bf x}_j]_{i,j=1}^{r+1})=\rank([{\bf x}_i^{\tr} A^{-1}{\bf x}_j]_{i,j=1}^{r})$.
\end{remark}

\begin{remark}
In Theorems~\ref{thm-nonalter} and \ref{thm-alter}, $B=A+XX^{\tr}$ where $X$ is the $n\times r$ matrix or the $n\times (r+1)$ matrix with ${\bf x}_i$ as the $i$-th column. By Lemma~\ref{l2},
$$X^{\tr}B^{-1}X=XA^{-1}X-X^{\tr}A^{-1}X(I+X^{\tr}A^{-1}X)^{-1}X^{\tr}A^{-1}X.$$
Therefore, if $[{\bf x}_i^{\tr}A^{-1}{\bf x}_j]_{i,j}=X^{\tr}A^{-1}X\in \ronetzero$,
then Proposition~\ref{prop0} implies that $X^{\tr}B^{-1}X=XA^{-1}X$. In particular, $[{\bf x}_i^{\tr}B^{-1}{\bf x}_j]_{i,j}\in \ronetzero$. Likewise, if in Theorem~\ref{thm-nonalter}, ${\bf x}_i^{\tr}A^{-1}{\bf x}_i=1$ for all $i$, then ${\bf x}_i^{\tr}B^{-1}{\bf x}_i=1$ for all $i$ by Corollary~\ref{c1}. Hence, matrices $A$ and $B$ really occur symmetrically in Theorems~\ref{thm-nonalter} and~\ref{thm-alter} (observe that, by Lemma~\ref{lemma3}, the case ${\bf x}_i^{\tr}A^{-1}{\bf x}_j=1$ for all $i,j$
in Theorem~\ref{thm-nonalter}~\eqref{nonalter_i} is equivalent to $[{\bf x}_i^{\tr}A^{-1}{\bf x}_j]_{i,j}\in \ronetzero$ and ${\bf x}_i^{\tr}A^{-1}{\bf x}_i=1$ for all $i$).
\end{remark}

\section{Auxiliary results from linear algebra}
\label{s3}

Lemma~\ref{schur} can be found in~\cite{handbook}. Lemma~\ref{l2} appears in~\cite[Corollary~2.9]{NOVA} or~\cite{zhang} where it is stated in a slightly different form (see also~\cite{handbook}).
\begin{lemma}\label{schur}
Let $\FF$ be any field. If $A\in M_{n\times n}(\FF)$ is partitioned in blocks as
$$A=\left(\begin{array}{cc}A_{11}&A_{12}\\
A_{21}&A_{22}\end{array}\right)$$
where $A_{22}$ is invertible, then $\det A=\det A_{22}\cdot \det(A_{11}-A_{12}(A_{22})^{-1}A_{21})$.
\end{lemma}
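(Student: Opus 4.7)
The plan is to reduce the determinant of the full block matrix to the determinant of a block triangular one by a single block row-operation, which is the standard route to the Schur-complement identity. Since $A_{22}$ is invertible, the matrix $A_{22}^{-1}A_{21}$ is well defined, and a direct multiplication shows
$$\begin{pmatrix} A_{11} & A_{12} \\ A_{21} & A_{22} \end{pmatrix}\begin{pmatrix} I & 0 \\ -A_{22}^{-1}A_{21} & I \end{pmatrix}=\begin{pmatrix} A_{11}-A_{12}A_{22}^{-1}A_{21} & A_{12} \\ 0 & A_{22} \end{pmatrix}.$$

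Taking determinants on both sides finishes the proof. The second factor on the left is unit lower block triangular, so its determinant equals $1$; hence $\det A$ equals the determinant of the right-hand side. The right-hand side is block upper triangular with diagonal blocks $A_{11}-A_{12}A_{22}^{-1}A_{21}$ and $A_{22}$, so its determinant is the product $\det(A_{11}-A_{12}A_{22}^{-1}A_{21})\cdot \det A_{22}$, which is exactly the assertion.

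The only ingredients invoked are multiplicativity of the determinant, the standard formula for the determinant of a block triangular matrix (itself a Laplace-expansion exercise), and the invertibility of $A_{22}$; each is valid over an arbitrary field $\FF$, so there is no genuine obstacle. The conceptual content of the lemma is simply that performing block Gaussian elimination with pivot $A_{22}$ produces the Schur complement in the $(1,1)$-position, and one then reads off the determinant from the resulting triangular form.
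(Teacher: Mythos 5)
Your proof is correct: the block elimination identity is verified by direct multiplication (the $(2,1)$ block becomes $A_{21}-A_{22}A_{22}^{-1}A_{21}=0$), and the conclusion follows from multiplicativity of the determinant together with the block-triangular determinant formula, all valid over any field. The paper does not prove this lemma at all — it simply cites it from a handbook reference — so there is nothing to compare against; your argument is the standard and complete one.
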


\begin{lemma}\label{l2}
Let $\FF$ be any field, $A\in GL_n(\FF)$, and $X,Y\in M_{n\times r}(\FF)$. Then,
$$\det(A+XY^{\tr})=\det(A)\cdot \det(I_r+Y^{\tr}A^{-1}X).$$
If $A+XY^{\tr}\in GL_n(\FF)$, then
$$(A+XY^{\tr})^{-1}=A^{-1}-A^{-1}X(I_r+Y^{\tr}A^{-1}X)^{-1} Y^{\tr}A^{-1}.$$
\end{lemma}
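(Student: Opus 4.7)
The plan is to prove the determinant identity via a single block-matrix computation and then to verify the inverse formula directly. For the determinant, I would introduce the $(n+r)\times(n+r)$ block matrix
\[
M=\begin{pmatrix} A & -X \\ Y^{\tr} & I_r \end{pmatrix}
\]
and evaluate $\det M$ in two ways by means of Lemma~\ref{schur}. Taking $A_{22}=I_r$ (trivially invertible) gives $\det M=\det I_r\cdot\det\bigl(A-(-X)I_r^{-1}Y^{\tr}\bigr)=\det(A+XY^{\tr})$. Conjugating $M$ by the block permutation that swaps the two diagonal blocks (same permutation on rows and columns, so the two sign contributions cancel) brings $A$ to the lower-right position, and a second application of Lemma~\ref{schur} yields $\det M=\det(A)\cdot\det(I_r+Y^{\tr}A^{-1}X)$. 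Equating the two expressions proves the first identity.

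For the inverse formula, assume $A+XY^{\tr}\in GL_n(\FF)$. The determinant identity just established forces $\det(I_r+Y^{\tr}A^{-1}X)\neq 0$, so $I_r+Y^{\tr}A^{-1}X$ is invertible and the right-hand side of the asserted formula is well defined. I would verify the formula by a direct multiplication: expanding
\[
(A+XY^{\tr})\bigl(A^{-1}-A^{-1}X(I_r+Y^{\tr}A^{-1}X)^{-1}Y^{\tr}A^{-1}\bigr),
\]
the cross terms collapse via the telescoping identity
\[
(A+XY^{\tr})A^{-1}X=X+XY^{\tr}A^{-1}X=X(I_r+Y^{\tr}A^{-1}X),
\]
which cancels the inner factor $(I_r+Y^{\tr}A^{-1}X)^{-1}$ and leaves $I_n$.

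The two statements are classical (the matrix-determinant lemma and the Woodbury identity), and no deeper input is needed beyond Lemma~\ref{schur} and one algebraic cancellation. The only real obstacle is bookkeeping in the block-permutation step, where one must check that the sign contributions from permuting block rows and block columns indeed cancel; once those sign conventions are in place, both claims follow with essentially no further work.
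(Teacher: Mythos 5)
Your proposal is correct. Note, however, that the paper does not prove Lemma~\ref{l2} at all: it is quoted as a known result (the matrix--determinant lemma and the Woodbury/Sherman--Morrison--Woodbury identity), with references given in the sentence preceding Lemma~\ref{schur}. Your argument is therefore a self-contained substitute rather than a variant of the paper's proof. Both halves check out: the block matrix $\left(\begin{smallmatrix} A & -X \\ Y^{\tr} & I_r\end{smallmatrix}\right)$ evaluated via Lemma~\ref{schur} with $A_{22}=I_r$ gives $\det(A+XY^{\tr})$, and after conjugating by the block swap (whose two determinant signs cancel, as you say) the same lemma with $A_{22}=A$ gives $\det(A)\det(I_r+Y^{\tr}A^{-1}X)$; the inverse formula then follows from the telescoping identity $(A+XY^{\tr})A^{-1}X=X(I_r+Y^{\tr}A^{-1}X)$, with invertibility of $I_r+Y^{\tr}A^{-1}X$ guaranteed by the determinant identity just proved. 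For completeness you might also multiply on the other side (or note that a one-sided inverse of a square matrix over a field is two-sided), but this is cosmetic.
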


\begin{cor}\label{c1}
Let $\FF$ be any field of characteristic two. Suppose ${\bf x}_1,\ldots,{\bf x}_r\in \FF^n$,
and assume that $A, B:=A+\sum_{i=1}^r {\bf x}_i^2\in SGL_n(\FF)$. Then ${\bf x}_i^{\tr} A^{-1} {\bf x}_i=1$ for all $i$ if and only if  ${\bf x}_i^{\tr} B^{-1} {\bf x}_i=1$ for all $i$.
\end{cor}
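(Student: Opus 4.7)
The plan is to apply Lemma~\ref{l2} to express $B^{-1}$ in terms of $A^{-1}$ and thereby extract a clean relation between the Gram-type matrices $M := X^{\tr}A^{-1}X$ and $N := X^{\tr}B^{-1}X$, where $X\in M_{n\times r}(\FF)$ is the matrix whose columns are ${\bf x}_1,\ldots,{\bf x}_r$. Since $B = A + XX^{\tr}$, Lemma~\ref{l2} forces $I_r+M$ to be invertible (because $\det B=\det A\cdot \det(I_r+M)$) and gives a formula for $B^{-1}$. Multiplying by $X^{\tr}$ on the left and $X$ on the right and simplifying in characteristic two yields $N = M(I_r+M)^{-1}$. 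Note that the diagonal of $M$ collects the scalars ${\bf x}_i^{\tr}A^{-1}{\bf x}_i$, and likewise for $N$.

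The main step is then the following. Assume that all diagonal entries of $M$ equal $1$. Write $M = I_r + M'$; the matrix $M'$ is symmetric and has zero diagonal, i.e.\ it is \emph{alternate} (we are in characteristic two). The previous paragraph ensures that $M' = I_r+M$ is invertible. Moreover, the inverse of an invertible alternate matrix is again alternate in characteristic two: if $M'{\bf y} = {\bf x}$, then ${\bf x}^{\tr}(M')^{-1}{\bf x} = {\bf y}^{\tr}M'{\bf y} = 0$, so $(M')^{-1}$ has zero diagonal. Therefore
$$N \;=\; M(I_r+M)^{-1} \;=\; (I_r+M')(M')^{-1} \;=\; I_r + (M')^{-1},$$
whose diagonal entries are all equal to $1$.

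For the converse, one observes that in characteristic two the setup is completely symmetric in $A$ and $B$: from $B = A + \sum_{i=1}^r {\bf x}_i^2$ one also has $A = B + \sum_{i=1}^r {\bf x}_i^2$. Applying the argument above with the roles of $A$ and $B$ exchanged gives the reverse implication.

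The only potentially delicate point is that the inverse of an alternate matrix remains alternate, but this is immediate in characteristic two from the computation above. (As a byproduct, the argument also reveals that the hypothesis all ${\bf x}_i^{\tr}A^{-1}{\bf x}_i=1$ combined with invertibility of $B$ forces $r$ to be even, since an invertible alternate matrix has even rank; but this is not needed for the corollary itself.) Everything else amounts to the algebraic manipulations permitted by Lemma~\ref{l2}.
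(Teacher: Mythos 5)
Your proposal is correct and follows essentially the same route as the paper: both invoke Lemma~\ref{l2} for the inverse of $B=A+XX^{\tr}$, observe that $I_r+X^{\tr}A^{-1}X$ is alternate when all ${\bf x}_i^{\tr}A^{-1}{\bf x}_i=1$, use that the inverse of an invertible alternate matrix is alternate, and conclude by the symmetry of $A$ and $B$ in characteristic two. The only cosmetic difference is that you compute $X^{\tr}B^{-1}X=M(I_r+M)^{-1}$ explicitly, whereas the paper argues directly that the correction term $A^{-1}X(I_r+X^{\tr}A^{-1}X)^{-1}X^{\tr}A^{-1}$ is alternate.
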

\begin{proof}
Let $X$ be the $n\times r$ matrix with ${\bf x}_i$ as its $i$-th column. Since $B=A+XX^{\tr}$,
$$B^{-1}=A^{-1}-A^{-1}X(I_r+X^{\tr}A^{-1}X)^{-1} X^{\tr}A^{-1}$$
by Lemma~\ref{l2}. If ${\bf x}_i^{\tr} A^{-1}{\bf x}_i=1$ for all $i$, then $I_r+X^{\tr}A^{-1}X$ is an alternate matrix. Hence, $C:=A^{-1}X(I_r+X^{\tr}A^{-1}X)^{-1} X^{\tr}A^{-1}$ is also alternate and ${\bf x}_i^{\tr}B^{-1} {\bf x}_i={\bf x}_i^{\tr}A^{-1} {\bf x}_i-{\bf x}_i^{\tr}C {\bf x}_i=1+0=1$. The converse is symmetric.
\end{proof}

\begin{lemma}\label{lemma3}
Let $r$ be an odd integer such that $1\leq r\leq n$. If ${\bf x}_1,\ldots,{\bf x}_r\in\FF_2^n$ and $A,A+\sum_{i=1}^r {\bf x}_i^2\in \sgl$, then there exists $i$ such that ${\bf x}_i^{\tr} A^{-1} {\bf x}_i=0$.
\end{lemma}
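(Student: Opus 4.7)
The plan is to argue by contradiction: suppose that ${\bf x}_i^{\tr} A^{-1} {\bf x}_i = 1$ for every $i \in \{1,\ldots,r\}$, and derive that $B = A + \sum_{i=1}^r {\bf x}_i^2$ cannot be invertible. This will contradict the hypothesis $B \in \sgl$.

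First I would package the rank-one updates into a single matrix: let $X \in M_{n\times r}(\FF_2)$ be the matrix whose $i$-th column is ${\bf x}_i$, so that $B = A + XX^{\tr}$. Lemma~\ref{l2} then gives
\[
\det B \;=\; \det A \cdot \det\bigl(I_r + X^{\tr}A^{-1}X\bigr).
\]
Since $A, B \in \sgl$ both have determinant $1$ over $\FF_2$, the $r\times r$ matrix $M := I_r + X^{\tr}A^{-1}X$ must be invertible.

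The key observation is then that the contradiction assumption forces $M$ to be alternate. Indeed, $X^{\tr}A^{-1}X$ is symmetric because $A^{-1}$ is symmetric, so $M$ is symmetric; and its diagonal entries are $1 + {\bf x}_i^{\tr}A^{-1}{\bf x}_i = 1 + 1 = 0$ in $\FF_2$ by the contradiction assumption. Therefore $M$ is an alternate matrix in $S_r(\FF_2)$. Since any alternate matrix over $\FF_2$ has even rank (recalled in Section~\ref{s2}), and $M$ is $r\times r$ with $r$ odd, $M$ cannot be invertible. This contradicts what we derived from Lemma~\ref{l2}, so some ${\bf x}_i^{\tr}A^{-1}{\bf x}_i$ must equal $0$. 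There is no real obstacle here; the only thing to be careful about is checking that the symmetry of $A^{-1}$ (which follows from $A \in \sgl$) really does make $M$ symmetric so that the parity-of-rank principle for alternate matrices applies.
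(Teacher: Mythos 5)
Your proof is correct and is essentially the paper's own argument: both reduce to $\det(I_r+X^{\tr}A^{-1}X)=1$ via Lemma~\ref{l2} and then invoke the fact that alternate matrices have even rank, so the invertible symmetric $r\times r$ matrix with $r$ odd cannot have an all-zero diagonal. You merely phrase it as a contradiction where the paper argues directly.
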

\begin{proof}
Let ${\bf x}_i$ be the $i$-th column of $X\in M_{n\times r}(\FF_2)$. By Lemma~\ref{l2},
$$1=\det\left(A+\sum_{i=1}^r {\bf x}_i^2\right)=\det(A+XX^{\tr})=\det(I_r+X^{\tr}A^{-1}X).$$
Since $r$ is odd, and alternate matrices have even rank, the symmetric matrix $I_r+X^{\tr}A^{-1}X$ is not alternate, i.e. some of its diagonal entries equals one.
\end{proof}

\begin{lemma}\label{lemica}
Let $a\in \FF_2$ be 1 if and only if the positive integer $m$ is odd. Then,
\begin{equation}\label{eq2}
\left(\begin{array}{ccc}
a&1+a&{\bf j}_m^{\tr}\\
1+a&1+a&{\bf j}_m^{\tr}\\
{\bf j}_m&{\bf j}_m&I_m
\end{array}\right)\in SGL_{m+2}(\FF_2).
\end{equation}
\end{lemma}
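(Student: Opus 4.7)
The matrix is manifestly symmetric, so the only content of the lemma is that its determinant equals $1$ in $\FF_2$. The plan is to compute the determinant by a Schur-complement reduction using Lemma~\ref{schur} with the clean invertible block $A_{22}=I_m$ in the lower-right corner.

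Writing the matrix in \eqref{eq2} as
$$\left(\begin{array}{cc} A_{11} & A_{12} \\ A_{21} & I_m \end{array}\right),\qquad A_{11}=\left(\begin{array}{cc} a & 1+a \\ 1+a & 1+a\end{array}\right),\quad A_{12}=A_{21}^{\tr}=\left(\begin{array}{c} {\bf j}_m^{\tr} \\ {\bf j}_m^{\tr}\end{array}\right),$$
Lemma~\ref{schur} gives $\det M = \det I_m\cdot \det\bigl(A_{11}-A_{12}A_{21}\bigr)$. The product $A_{12}A_{21}$ is the $2\times 2$ matrix all of whose entries equal ${\bf j}_m^{\tr}{\bf j}_m = m$. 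By the definition of $a$, the scalar $m$ reduces to $a$ modulo $2$, so $A_{12}A_{21}$ is the $2\times 2$ matrix with every entry equal to $a$.

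Adding this to $A_{11}$ in characteristic two yields
$$A_{11}+A_{12}A_{21}=\left(\begin{array}{cc} 0 & 1 \\ 1 & 1 \end{array}\right),$$
whose determinant is $1$ in $\FF_2$. Hence $\det M=1$, and since $M$ is obviously symmetric, $M\in SGL_{m+2}(\FF_2)$.

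There is no real obstacle here; the only thing to be careful about is the reduction $m\equiv a\pmod 2$, which is exactly the content of the hypothesis on $a$, and the sign-free arithmetic in characteristic two that turns $A_{11}-A_{12}A_{21}$ into $A_{11}+A_{12}A_{21}$.
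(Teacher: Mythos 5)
Your proof is correct and follows essentially the same route as the paper: a Schur-complement reduction via Lemma~\ref{schur} against the invertible block $I_m$, using ${\bf j}_m^{\tr}{\bf j}_m=m\equiv a$ to reduce the $2\times 2$ complement to $\left(\begin{smallmatrix}0&1\\1&1\end{smallmatrix}\right)$ with determinant $1$. Nothing to add.
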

\begin{proof}
Let $A$ be the $2\times 2$ top-left corner of the matrix in \eqref{eq2}. By Lemma~\ref{schur},
\begin{align*}
\det\left(\begin{array}{cc}A&J_{2\times m}\\
J_{2\times m}^{\tr}&I_m\end{array}\right)&=\det(A-J_{2\times m}J_{2\times m}^{\tr})\\
&=\det(A-aJ_{2\times 2})
=\det\left(\begin{array}{cc}
0&1\\
1&1\end{array}\right)=1.\tag*{\qedhere}
\end{align*}
\end{proof}

\begin{prop}\label{prop0}
Let $A\in S_n(\FF_2)$. Then $A\in \ronetzero$ if and only if there exist a permutation matrix $Q\in GL_n(\FF_2)$ and even $k\in \{2,\ldots,n\}$
such that
\begin{equation}\label{eq8}
A=Q\left(\begin{array}{cc}J_{k\times k}& O\\O&O\end{array}\right)Q^{\tr}
\end{equation}
In this case, $I+A\in \sgl$ and $(I+A)^{-1}=I+A$.
\end{prop}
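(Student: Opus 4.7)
The plan is to prove both directions separately and then verify the involution property $(I+A)^{-1}=I+A$ by a direct computation.

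For the forward direction, I would first show that every symmetric rank-one matrix over $\FF_2$ is of the form ${\bf x}^2={\bf x}{\bf x}^{\tr}$ for some nonzero ${\bf x}\in\FF_2^n$. This follows because a rank-one matrix factors as ${\bf x}{\bf y}^{\tr}$, and symmetry forces ${\bf y}\in\langle{\bf x}\rangle$, so over $\FF_2$ we must have ${\bf y}={\bf x}$. Next I would translate the condition $\Tr A=0$: since $\Tr({\bf x}{\bf x}^{\tr})={\bf x}^{\tr}{\bf x}=\sum_i x_i$ in $\FF_2$, this is exactly the statement that the Hamming weight $k$ of ${\bf x}$ is even. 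Because ${\bf x}\ne{\bf 0}$, we have $k\in\{2,4,\ldots\}\cap\{2,\ldots,n\}$.

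Then I would choose a permutation matrix $Q$ that moves the support of ${\bf x}$ to the first $k$ coordinates, i.e.\ $Q^{\tr}{\bf x}=\binom{{\bf j}_k}{{\bf 0}}$. A quick block computation gives
\[
Q^{\tr}AQ=(Q^{\tr}{\bf x})(Q^{\tr}{\bf x})^{\tr}=\left(\begin{array}{cc}J_{k\times k}& O\\ O&O\end{array}\right),
\]
which yields the required form. The converse direction is immediate: for $A$ written as in \eqref{eq8} we have $\rank A=\rank J_{k\times k}=1$ and $\Tr A=\Tr(J_{k\times k})=k\equiv 0\pmod 2$, so $A\in\ronetzero$.

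For the final claim about $I+A$, the key observation is that $J_{k\times k}^2={\bf j}_k({\bf j}_k^{\tr}{\bf j}_k){\bf j}_k^{\tr}=k\,J_{k\times k}$, which vanishes over $\FF_2$ precisely because $k$ is even. Conjugating by $Q$ gives $A^2=O$, so
\[
(I+A)^2=I+2A+A^2=I,
\]
which shows both that $I+A\in\sgl$ and that $(I+A)^{-1}=I+A$. I do not expect any real obstacle here; the only step that requires a moment of care is the classification of symmetric rank-one binary matrices, and the parity of $k$ neatly enters both the trace condition and the nilpotency of $A$.
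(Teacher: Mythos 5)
Your proof is correct and follows essentially the same route as the paper: identify the (even-sized) support of the diagonal, permute it to the front, and verify $(I+A)^2=I$ using that $J_{k\times k}^2=kJ_{k\times k}=O$ for even $k$. The only cosmetic difference is that you first factor $A={\bf x}{\bf x}^{\tr}$ and work with the vector ${\bf x}$, whereas the paper derives the block form directly from the entry identity $b_{ij}=b_{ii}b_{jj}$ coming from the vanishing $2\times 2$ minors; both arguments are equally valid.
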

\begin{proof}
Since $Q^{\tr}=Q^{-1}$ and $\Tr$ is invariant under the similarity operation, it is obvious that matrix \eqref{eq8} is in $\ronetzero$.
Moreover, since $k$ is even, we deduce that
$$(I+A)^2=Q\left(\begin{array}{cc}(J_{k\times k}+I_k)^2& O\\O&I_{n-k}^{2}\end{array}\right)Q^{\tr}=I.$$ Hence, $I+A$ is invertible with $(I+A)^{-1}=I+A$.

Suppose now that $A=[a_{ij}]_{i,j=1}^n\in \ronetzero$ is arbitrary. Let $S_1=\{i\in \{1,\ldots,n\} : a_{ii}=1\}$ and $k=|S_1|$. Since $A$ has zero trace, it follows that $k$ is even. Being of rank one, the matrix $A$ is nonalternate and therefore $k>0$. Pick a permutation matrix $Q$ such that matrix $[b_{ij}]_{i,j=1}^n=Q^{\tr}AQ$ satisfies $\{i\in \{1,\ldots,n\} : b_{ii}=1\}=\{1,\ldots,k\}$. Since matrix $[b_{ij}]_{i,j=1}^n$ is symmetric and of rank one, it follows that $b_{ij}=b_{ij}^2=b_{ij}b_{ji}=b_{ii}b_{jj}$, i.e.
$$[b_{ij}]_{i,j=1}^n=\left(\begin{array}{cc}J_{k\times k}& O\\O&O\end{array}\right).\qedhere$$
\end{proof}

\begin{lemma}\label{alternate-canonical}
If $A\in S_n(\FF_2)$ is an alternate matrix of rank $r>0$, then there exist linearly independent column vectors ${\bf x}_1,\ldots, {\bf x}_r\in \FF_2^n$ such that
$$A={\bf x}_1^2+\cdots +{\bf x}_r^2+({\bf x}_1+\cdots+{\bf x}_r)^2.$$
Moreover, if $A={\bf y}_1\circ {\bf y}_2+\cdots+{\bf y}_{r-1}\circ {\bf y}_{r}$ for some linearly independent vectors ${\bf y}_1,\ldots, {\bf y}_r$ where $r$ is even, then the vectors
\begin{align*}
{\bf x}_1&:={\bf y}_1,\\
{\bf x}_2&:={\bf y}_2,\\
{\bf x}_{2k-1}&:={\bf y}_1+\cdots +{\bf y}_{2k-2}+{\bf y}_{2k-1}, \qquad &(2\leq k\leq r/2),\\
{\bf x}_{2k}&:={\bf y}_1+\cdots +{\bf y}_{2k-2}+{\bf y}_{2k}, \qquad &(2\leq k\leq r/2).
\end{align*}
have the desired property.
\end{lemma}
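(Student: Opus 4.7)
The first assertion is a consequence of the second, since the paper already cites \cite[Proposition~1.34]{wan} to guarantee that an alternate matrix of rank $r$ admits a decomposition $A={\bf y}_1\circ {\bf y}_2+\cdots+{\bf y}_{r-1}\circ {\bf y}_r$ with linearly independent ${\bf y}_i$'s. So my plan reduces to verifying the ``moreover'' part: given such a ${\bf y}$-decomposition, check that the explicit ${\bf x}_i$'s are linearly independent and satisfy ${\bf x}_1^2+\cdots+{\bf x}_r^2+({\bf x}_1+\cdots+{\bf x}_r)^2=A$.

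\emph{Linear independence.} I would introduce the partial sums $S_j:={\bf y}_1+\cdots+{\bf y}_j$ (with $S_0={\bf 0}$), so that uniformly for $1\le k\le r/2$ one has ${\bf x}_{2k-1}=S_{2k-2}+{\bf y}_{2k-1}$ and ${\bf x}_{2k}=S_{2k-2}+{\bf y}_{2k}$. An easy induction on $k$ then gives ${\bf y}_{2k-1}, {\bf y}_{2k}\in \langle {\bf x}_1,\ldots, {\bf x}_{2k}\rangle$, so $\langle {\bf x}_1,\ldots,{\bf x}_r\rangle=\langle {\bf y}_1,\ldots,{\bf y}_r\rangle$ and the ${\bf x}_i$'s are linearly independent.

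\emph{The identity.} Using the uniform description and the characteristic-two identity $({\bf u}+{\bf v})^2={\bf u}^2+{\bf v}^2+{\bf u}\circ{\bf v}$, I compute
\begin{equation*}
{\bf x}_{2k-1}^2+{\bf x}_{2k}^2={\bf y}_{2k-1}^2+{\bf y}_{2k}^2+S_{2k-2}\circ({\bf y}_{2k-1}+{\bf y}_{2k}),
\end{equation*}
since $2S_{2k-2}^2=0$. Summing over $k$ and combining with $\sum_i{\bf x}_i=S_r$ and
\begin{equation*}
S_r^2=\sum_{i=1}^r {\bf y}_i^2+\sum_{1\le i<j\le r}{\bf y}_i\circ {\bf y}_j,
\end{equation*}
the diagonal terms $\sum_i{\bf y}_i^2$ cancel (each appearing twice), and the task reduces to showing
\begin{equation*}
\sum_{k=1}^{r/2}S_{2k-2}\circ({\bf y}_{2k-1}+{\bf y}_{2k})+\sum_{1\le i<j\le r}{\bf y}_i\circ {\bf y}_j=\sum_{k=1}^{r/2}{\bf y}_{2k-1}\circ {\bf y}_{2k}.
\end{equation*}
A straightforward bookkeeping shows that the left double sum expands to $\sum_{i<j}{\bf y}_i\circ{\bf y}_j$ minus precisely those pairs $(i,j)=(2k-1,2k)$; in characteristic two this gives $\sum_{i<j}{\bf y}_i\circ{\bf y}_j+\sum_{k}{\bf y}_{2k-1}\circ{\bf y}_{2k}$, and adding the second sum the cross terms cancel, leaving exactly $A$.

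The only real obstacle is the combinatorial check in the last displayed identity: one must carefully count, for each pair $i<j$, when the term ${\bf y}_i\circ{\bf y}_j$ arises in the expansion of $\sum_k S_{2k-2}\circ({\bf y}_{2k-1}+{\bf y}_{2k})$. The answer is that it appears in every pair except $(2k-1,2k)$, because ${\bf y}_l\circ{\bf y}_{2k-1}$ with $l<2k-1$ always contributes (as $l\le 2k-2$), while ${\bf y}_l\circ {\bf y}_{2k}$ with $l<2k$ contributes exactly when $l\ne 2k-1$. Once this observation is recorded, the identity follows from a one-line cancellation in characteristic two.
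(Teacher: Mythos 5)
Your proof is correct. The reduction of the first assertion to the ``moreover'' part via the cited decomposition $A={\bf y}_1\circ{\bf y}_2+\cdots+{\bf y}_{r-1}\circ{\bf y}_r$ is exactly what the paper intends, your span argument for linear independence matches the paper's observation that the ${\bf y}_i$'s can be recovered from the ${\bf x}_i$'s, and your pair-counting in the expansion of $\sum_k S_{2k-2}\circ({\bf y}_{2k-1}+{\bf y}_{2k})$ checks out: the pair $(i,j)$ with $i<j$ is picked up exactly once unless $(i,j)=(2k-1,2k)$, so adding $\sum_{i<j}{\bf y}_i\circ{\bf y}_j$ leaves precisely $\sum_k{\bf y}_{2k-1}\circ{\bf y}_{2k}=A$ in characteristic two.

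The organization differs from the paper's, though the ingredients are identical (the identity $({\bf u}+{\bf v})^2={\bf u}^2+{\bf v}^2+{\bf u}\circ{\bf v}$ throughout). The paper proves the displayed equality by induction on $r$, peeling off ${\bf y}_{r-1}\circ{\bf y}_r$ at each step and rewriting it as $\bigl(({\bf x}_1+\cdots+{\bf x}_{r-2})+{\bf x}_{r-1}\bigr)\circ\bigl(({\bf x}_1+\cdots+{\bf x}_{r-2})+{\bf x}_r\bigr)$; this trades your global double-sum bookkeeping for a short local computation repeated inductively, and never needs the explicit expansion of $S_r^2$ into $\sum_i{\bf y}_i^2+\sum_{i<j}{\bf y}_i\circ{\bf y}_j$. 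Your direct expansion makes the cancellation structure visible all at once, at the cost of the careful ``which pairs occur'' count that you correctly flag as the one delicate point. Both arguments are of comparable length and either would serve.
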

\begin{proof}
Observe that ${\bf x}_{2k-1}+{\bf x}_{2k}={\bf y}_{2k-1}+{\bf y}_{2k}$ for each $k$, and consequently
\begin{align*}
{\bf y}_{2k-1}&={\bf x}_1+\cdots +{\bf x}_{2k-2}+{\bf x}_{2k-1}, \qquad &(2\leq k\leq r/2),\\
{\bf y}_{2k}&={\bf x}_1+\cdots +{\bf x}_{2k-2}+{\bf x}_{2k}, \qquad &(2\leq k\leq r/2).
\end{align*}
We prove the equality
$${\bf y}_1\circ {\bf y}_2+\cdots+{\bf y}_{r-1}\circ {\bf y}_{r}={\bf x}_1^2+\cdots +{\bf x}_r^2+({\bf x}_1+\cdots+{\bf x}_r)^2$$
by induction on $r$. If $r=2$, the claim is obvious. Let $r\geq 4$ and
assume that $${\bf y}_1\circ {\bf y}_2+\cdots+{\bf y}_{r-3}\circ {\bf y}_{r-2}={\bf x}_1^2+\cdots +{\bf x}_{r-2}^2+({\bf x}_1+\cdots+{\bf x}_{r-2})^2.$$
Then,
\begin{align*}
A&={\bf x}_1^2+\cdots +{\bf x}_{r-2}^2+({\bf x}_1+\cdots+{\bf x}_{r-2})^2+{\bf y}_{r-1}\circ {\bf y}_{r}\\
&={\bf x}_1^2+\cdots +{\bf x}_{r-2}^2+({\bf x}_1+\cdots+{\bf x}_{r-2})^2\\
&+\big(({\bf x}_1+\cdots+{\bf x}_{r-2})+{\bf x}_{r-1}\big)\circ \big(({\bf x}_1+\cdots+{\bf x}_{r-2})+{\bf x}_{r}\big)\\
&={\bf x}_1^2+\cdots +{\bf x}_{r-2}^2+({\bf x}_1+\cdots+{\bf x}_{r-2})^2\\
&+0+{\bf x}_{r-1}\circ {\bf x}_{r}+({\bf x}_1+\cdots+{\bf x}_{r-2})\circ({\bf x}_{r-1}+{\bf x}_{r})\\
&={\bf x}_1^2+\cdots +{\bf x}_{r-2}^2+({\bf x}_1+\cdots+{\bf x}_{r})^2+({\bf x}_{r-1}+{\bf x}_{r})^2+{\bf x}_{r-1}\circ {\bf x}_{r}\\
&={\bf x}_1^2+\cdots +{\bf x}_{r}^2+({\bf x}_1+\cdots+{\bf x}_{r})^2.\tag*{\qedhere}
\end{align*}
\end{proof}

\begin{lemma}\label{l1}
Let $1\leq s<r\leq n$ where $s$ is an odd integer, and let ${\bf x}_1,\ldots,{\bf x}_r\in\FF_2^n$. Suppose $\{i_1,\ldots,i_s\}\subseteq \{1,\ldots,r\}$ is a subset with $s$ elements and complement $\{i_{s+1},\ldots,i_{r}\}=\{1,\ldots,r\}\backslash\{i_1,\ldots,i_s\}$. If
\begin{align*}
{\bf w}_j&={\bf x}_{i_j}+ {\bf x}_{i_{s+1}}+ {\bf x}_{i_{s+2}}+\cdots + {\bf x}_{i_{r}}\qquad &(j=1,\ldots,s),\\
{\bf w}_k&={\bf x}_{i_k}  &(k=s+1,\ldots,r),
\end{align*}
then
$${\bf x}_1^2+\cdots +{\bf x}_r^2+({\bf x}_1+\cdots+{\bf x}_r)^2+({\bf x}_{i_1}+\cdots+{\bf x}_{i_s})^2={\bf w}_1^2+\cdots +{\bf w}_r^2.$$
Moreover, for all $C\in S_n(\FF_2)$ we have
\begin{equation}\label{eq35}
\rank\big( [{\bf w}_{i}^{\tr}C{\bf w}_{j}]_{i,j\in\{1,\ldots,r\}}\big)=\rank\big( [{\bf x}_{i}^{\tr}C{\bf x}_{j}]_{i,j\in\{1,\ldots,r\}}\big)
\end{equation}
and
$$\Tr [{\bf w}_{i}^{\tr}C{\bf w}_{j}]_{i,j\in\{1,\ldots,r\}}=\sum_{j=1}^s {\bf x}_{i_j}^{\tr}C{\bf x}_{i_j}.$$
\end{lemma}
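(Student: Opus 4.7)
My plan is to prove all three assertions by direct computation in characteristic two. I would first set the abbreviations $a_j:={\bf x}_{i_j}$, $P:=\sum_{j=1}^s a_j$, and $S:=\sum_{j=s+1}^r a_j$, so that ${\bf w}_j=a_j+S$ for $j\leq s$ and ${\bf w}_j=a_j$ for $j>s$. Note that $\sum_{i=1}^r {\bf x}_i^2=\sum_{j=1}^r a_j^2$ and $({\bf x}_1+\cdots+{\bf x}_r)^2=(P+S)^2$, while $({\bf x}_{i_1}+\cdots+{\bf x}_{i_s})^2=P^2$.

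For the first identity, I would expand $(a_j+S)^2=a_j^2+S^2+a_j\circ S$, sum over $j=1,\ldots,s$, and use the parity of $s$ (so that $sS^2=S^2$) together with the bilinearity of $\circ$ to obtain
$$\sum_{i=1}^r {\bf w}_i^2=\sum_{j=1}^r a_j^2+S^2+P\circ S.$$
Comparing this with the right-hand side rewritten as $\sum a_j^2+(P+S)^2+P^2=\sum a_j^2+S^2+P\circ S$ (using $P^2+P^2=0$) gives the equality.

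For the rank equality \eqref{eq35} I would collect the substitution into matrix form. Letting $W:=[{\bf w}_1\ \cdots\ {\bf w}_r]$ and $X':=[{\bf x}_{i_1}\ \cdots\ {\bf x}_{i_r}]$, there is an invertible block-lower-triangular matrix $T\in GL_r(\FF_2)$ (with identity diagonal blocks and an off-diagonal $J$ block) such that $W=X'T$. Then $W^{\tr}CW=T^{\tr}(X')^{\tr}CX'T$ has the same rank as $(X')^{\tr}CX'$, which in turn has the same rank as $[{\bf x}_i^{\tr}C{\bf x}_j]_{i,j=1}^r$ because these two matrices differ only by a simultaneous row/column permutation.

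For the trace formula, the symmetry of $C$ kills all cross terms in characteristic two: ${\bf w}_i^{\tr}C{\bf w}_i=a_i^{\tr}Ca_i+S^{\tr}CS$ for $i\leq s$ and ${\bf w}_i^{\tr}C{\bf w}_i=a_i^{\tr}Ca_i$ otherwise, while expanding $S^{\tr}CS$ and pairing off-diagonal terms yields $S^{\tr}CS=\sum_{k=s+1}^r a_k^{\tr}Ca_k$. Summing, using that $s$ is odd, each term $a_k^{\tr}Ca_k$ for $k>s$ appears twice and cancels, leaving exactly $\sum_{j=1}^s a_j^{\tr}Ca_j=\sum_{j=1}^s {\bf x}_{i_j}^{\tr}C{\bf x}_{i_j}$. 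I do not anticipate any real obstacle: the whole lemma is bookkeeping with the identity $({\bf x}+{\bf y})^2={\bf x}^2+{\bf y}^2+{\bf x}\circ{\bf y}$, the symmetry of $C$, and the parity of $s$.
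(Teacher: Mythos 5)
Your proposal is correct and follows essentially the same route as the paper: expand $(a_j+S)^2=a_j^2+S^2+a_j\circ S$ and use the parity of $s$ for the first identity, write $W$ as $X$ times an invertible matrix for the rank claim, and cancel the cross terms via symmetry of $C$ together with the parity of $s+1$ for the trace. The only cosmetic difference is that you pass through the permuted matrix $X'=[{\bf x}_{i_1}\ \cdots\ {\bf x}_{i_r}]$ before comparing ranks, whereas the paper absorbs the permutation into the single invertible factor $R$ with $W=XR$; this changes nothing.
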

\begin{proof}
By applying the equality $({\bf a} + {\bf b})^2={\bf a}^2+{\bf b}^2+{\bf a}\circ {\bf b}$ where ${\bf b}={\bf x}_{i_{s+1}}+\cdots+{\bf x}_{i_r}$ several times, we deduce that
\begin{align*}
{\bf w}_1^2+\cdots +{\bf w}_r^2&=\sum_{j=1}^r {\bf x}_{i_j}^2 + ({\bf x}_{i_{s+1}}+\cdots+{\bf x}_{i_r})^2\cdot s\\
&\qquad\qquad\ \! + ({\bf x}_{i_1}+\cdots+{\bf x}_{i_s})\circ ({\bf x}_{i_{s+1}}+\cdots+{\bf x}_{i_r})\\
&=\sum_{j=1}^r {\bf x}_{i_{j}}^2 + ({\bf x}_{i_1}+\cdots+{\bf x}_{i_r})^2+ ({\bf x}_{i_1}+\cdots+{\bf x}_{i_s})^2\\
&=\sum_{j=1}^r {\bf x}_{j}^2 + ({\bf x}_{1}+\cdots+{\bf x}_{r})^2+ ({\bf x}_{i_1}+\cdots+{\bf x}_{i_s})^2.
\end{align*}
Let $X$ and $W$ be the $n\times r$ matrices with ${\bf x}_i$ and ${\bf w}_i$ as the $i$-th column, respectively. Then, $W=XR$ for some $R \in GL_r(\FF _2)$. Therefore,
$$[{\bf w}_{i}^{\tr}C{\bf w}_{j}]_{i,j=1}^r=W^{\tr} CW=R^{\tr}X^{\tr} C XR= R^{\tr}[{\bf x}_{i}^{\tr}C{\bf x}_{j}]_{i,j=1}^r R$$
and $\rank\big( [{\bf w}_{i}^{\tr}C{\bf w}_{j}]_{i,j=1}^r \big)=\rank\big( [{\bf x}_{i}^{\tr}C{\bf x}_{j}]_{i,j=1}^r \big)$. Since $s$ is odd, we deduce
\begin{align*}
\Tr [{\bf w}_{i}^{\tr}C{\bf w}_{j}]_{i,j=1}^r&= \sum_{i=1}^r {\bf w}_{i}^{\tr}C{\bf w}_{i}\\
&=\sum_{j=1}^s({\bf x}_{i_j} + {\bf x}_{i_{s+1}} + \cdots + {\bf x}_{i_r})^{\tr}C ({\bf x}_{i_j} + {\bf x}_{i_{s+1}}+ \cdots + {\bf x}_{i_r})\\
&+ \sum_{j=s+1} ^r {\bf x}_{i_j}^{\tr}C {\bf x}_{i_j}\\
&= \sum_{j=1}^s {\bf x}_{i_j}^{\tr} C{\bf x}_{i_j} + s \cdot \sum_{j=s+1}^r {\bf x}_{i_j}^{\tr} C {\bf x}_{i_j}+ \sum_{j=s+1}^{r} {\bf x}_{i_j}^{\tr} C{\bf x}_{i_j}\\
&=\sum_{j=1}^s{\bf x}_{i_j}^{\tr} C {\bf x}_{i_j}.\tag*{\qedhere}
\end{align*}
\end{proof}

\begin{lemma}\label{l3}
Let $1\leq s<r\leq n$ where $s$ and $r$ are even integers, and let ${\bf x}_1,\ldots,{\bf x}_r\in\FF_2^n$. Suppose $\{i_1,\ldots,i_s\}\subseteq \{1,\ldots,r\}$ is a subset with $s$ elements and $\{i_{s+1},\ldots,i_{r}\}=\{1,\ldots,r\}\backslash\{i_1,\ldots,i_s\}$ is its complement. If
\begin{align*}
{\bf w}_j&={\bf x}_{i_j}+ {\bf x}_{i_{s+1}}+ {\bf x}_{i_{s+2}}+\cdots + {\bf x}_{i_{r}}\qquad &(j=1,\ldots,s-1),\\
{\bf w}_s&={\bf x}_{i_s},\\
{\bf w}_k&={\bf x}_{i_s}+{\bf x}_{i_k}  &(k=s+1,\ldots,r),
\end{align*}
then the same claim as in Lemma~\ref{l1} is true.
\end{lemma}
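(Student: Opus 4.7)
The plan is to adapt the proof of Lemma~\ref{l1} by re-doing the three computations (the sum-of-squares identity, the rank equality, and the trace formula) with the new definition of the ${\bf w}_i$'s and with the new parity data (both $s$ and $r$ now even). The definition of the ${\bf w}_i$'s in this lemma is chosen precisely so that the parity counts that made Lemma~\ref{l1} work continue to balance: the index~$s$ is isolated, and the substitutions ``$+{\bf u}$'' for $j<s$ and ``$+{\bf x}_{i_s}$'' for $k>s$ compensate for the change in parity of~$s$.

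For the sum-of-squares identity, I would write ${\bf u}:={\bf x}_{i_{s+1}}+\cdots+{\bf x}_{i_r}$ and ${\bf v}:={\bf x}_1+\cdots+{\bf x}_r$, expand ${\bf w}_1^2+\cdots+{\bf w}_r^2$ using $({\bf a}+{\bf b})^2={\bf a}^2+{\bf b}^2+{\bf a}\circ{\bf b}$, and collect. The first $s-1$ terms yield $\sum_{j=1}^{s-1}{\bf x}_{i_j}^2+(s-1){\bf u}^2+\bigl(\sum_{j=1}^{s-1}{\bf x}_{i_j}\bigr)\circ{\bf u}$; the middle term ${\bf w}_s^2$ contributes ${\bf x}_{i_s}^2$; and the last $r-s$ terms contribute $(r-s){\bf x}_{i_s}^2+\sum_{k=s+1}^r{\bf x}_{i_k}^2+{\bf x}_{i_s}\circ{\bf u}$. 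Since $s-1$ is odd and $r-s$ is even, the total collapses to $\sum_{j=1}^r{\bf x}_j^2+{\bf u}^2+\bigl(\sum_{j=1}^{s}{\bf x}_{i_j}\bigr)\circ{\bf u}$. Substituting ${\bf x}_{i_1}+\cdots+{\bf x}_{i_s}={\bf v}+{\bf u}$ and using ${\bf u}\circ{\bf u}=0$, one recognizes this as $\sum_{j=1}^r{\bf x}_j^2+{\bf v}^2+({\bf x}_{i_1}+\cdots+{\bf x}_{i_s})^2$, which is the claimed identity.

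For the rank equality, I would observe that the ${\bf x}_{i_j}$'s can be recovered linearly from the ${\bf w}_i$'s: ${\bf x}_{i_s}={\bf w}_s$, ${\bf x}_{i_k}={\bf w}_s+{\bf w}_k$ for $k=s+1,\ldots,r$, and then ${\bf u}$ is a known linear combination of the ${\bf w}_k$'s, giving ${\bf x}_{i_j}={\bf w}_j+{\bf u}$ for $j=1,\ldots,s-1$. Thus $W=XR$ for some $R\in GL_r(\FF_2)$, and \eqref{eq35} follows from $W^{\tr}CW=R^{\tr}(X^{\tr}CX)R$. For the trace, I would expand each ${\bf w}_i^{\tr}C{\bf w}_i$ using $({\bf a}+{\bf b})^{\tr}C({\bf a}+{\bf b})={\bf a}^{\tr}C{\bf a}+{\bf b}^{\tr}C{\bf b}$ (valid in characteristic two, using the symmetry of $C$ to kill the cross term); the same parity accounting yields $\sum_{j=1}^r{\bf x}_{i_j}^{\tr}C{\bf x}_{i_j}+{\bf u}^{\tr}C{\bf u}$. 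By the same symmetry argument, ${\bf u}^{\tr}C{\bf u}=\sum_{k=s+1}^r{\bf x}_{i_k}^{\tr}C{\bf x}_{i_k}$, which cancels the complementary block and leaves $\sum_{j=1}^s{\bf x}_{i_j}^{\tr}C{\bf x}_{i_j}$. I do not anticipate a conceptual obstacle; the only delicate point is keeping the parity bookkeeping straight in the sum-of-squares identity, and this is exactly what the asymmetric definition of the ${\bf w}_i$'s (in particular the special role of ${\bf x}_{i_s}$) is engineered to handle.
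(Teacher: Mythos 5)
Your proposal is correct and follows essentially the same route as the paper's proof: expand $\sum_j {\bf w}_j^2$ via $({\bf a}+{\bf b})^2={\bf a}^2+{\bf b}^2+{\bf a}\circ{\bf b}$, use that $s-1$ is odd and $r-s$ is even to collapse the multiplicities, obtain \eqref{eq35} from $W=XR$ with $R\in GL_r(\FF_2)$, and do the analogous parity bookkeeping for the trace. All three computations check out, so there is nothing to add.
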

\begin{proof}
By applying the equality $({\bf a} + {\bf b})^2={\bf a}^2+{\bf b}^2+{\bf a}\circ {\bf b}$ several times, and recalling that $s,r$ are both even, we deduce that
\begin{align*}
\sum_{j=1}^r {\bf w}_j^2&= \sum_{j=1}^r {\bf x}_{i_j}^2 + (s-1)\cdot({\bf x}_{i_{s+1}}  + \cdots+{\bf x}_{i_r})^2\\
&+   \sum _{j=1}^{s-1}{\bf x}_{i_{j}}\circ ({\bf x}_{i_{s+1}}+\cdots+{\bf x}_{i_r}) + (r-s)\cdot {\bf x}_{i_{s}}^2+   \sum_{j=s+1}^r{\bf x}_{i_{j}} \circ {\bf x}_{i_{s}}\\
&=  \sum_{j=1}^r{\bf x}_{i_{j}}^2 + ({\bf x}_{i_{s+1}} + \cdots + {\bf x}_{i_r})^2 \\
&+ ({\bf x}_{i_{1}} + \cdots + {\bf x}_{x_{i_{s-1}}} ) \circ ({ \bf x}_{{i_{s+1}}} + \cdots + {\bf x}_{i_r} ) + {\bf x}_{i_s} \circ ({\bf x}_{i_{s+1}}+ \cdots + {\bf x}_{i_{r}})\\
&=\sum_{j=1}^r {\bf {x}}_{i_{j}}^2 + ({\bf x}_{i_{s+1}} + \cdots + {\bf x}_{i_r})^2+ ({\bf x}_{i_1}+\cdots+{\bf x}_{i_s})\circ ({\bf x}_{i_{s+1}}+\cdots+{\bf x}_{i_r})\\
&=\sum_{j=1}^r {\bf x}_{i_{j}}^2 + ({\bf x}_{i_1} + \cdots + {\bf x}_{i_r})^2+ ({\bf x}_{i_1}+\cdots+{\bf x}_{i_s})^2\\
&=\sum_{j=1}^r {\bf x}_{j}^2 + ({\bf x}_{1} + \cdots + {\bf x}_{r})^2+ ({\bf x}_{i_1}+\cdots+{\bf x}_{i_s})^2
\end{align*}
as claimed. We prove~\eqref{eq35} in the same way as in the proof of Lemma~\ref{l1}. Finally,
\begin{align*}
\Tr [{\bf w}_{i}^{\tr}C{\bf w}_{j}]_{i,j=1}^r&=\sum_{i=1}^r {\bf w}_{i}^{\tr}C{\bf w}_{i}\\
&=\sum_{j=1}^{s-1}({\bf x}_{i_j} + {\bf x}_{i_{s+1}} + \cdots + {\bf x}_{i_r})^{\tr}C ({\bf x}_{i_j} + {\bf x}_{i_{s+1}}+ \cdots + {\bf x}_{i_r})\\
&+ {\bf x}_{i_s}^{\tr}C{\bf x}_{i_s} +\sum_{j=s+1}^r({\bf x}_{i_j}+ {\bf x}_{i_s})^{\tr}C({\bf x}_{i_j}+ {\bf x}_{i_s})\\
&= \sum_{j=1}^{s-1}{\bf x}_{i_j}^{\tr} C{\bf x}_{i_j} + (s-1) \cdot \sum_{j=s+1}^r {\bf x}_{i_j}^{\tr} C {\bf x}_{i_j}\\
&+{\bf x}_{i_s}^{\tr}C {\bf x}_{i_s} + \sum_{j=s+1}^{r} {\bf x}_{i_j}^{\tr} C{\bf x}_{i_j} + (r-s)\cdot {\bf x}_{i_s}^{\tr}C{\bf x}_{i_s}\\
&=\sum_{j=1}^s{\bf x}_{i_j}^{\tr} C {\bf x}_{i_j}.\tag*{\qedhere}
\end{align*}
\end{proof}

\begin{lemma}\label{lemma4}
Let ${\bf x}_1,\ldots,{\bf x}_r, {\bf v}\in\FF_2^n$ where $r$ is even. Then,
\begin{equation}\label{eq10}
{\bf x}_1^2+\cdots +{\bf x}_r^2+({\bf x}_1+\cdots+{\bf x}_r)^2+{\bf v}^2=({\bf x}_1+\cdots+{\bf x}_r+{\bf v})^2+ ({\bf x}_1+{\bf v})^2+\cdots+({\bf x}_r+{\bf v})^2.
\end{equation}
\end{lemma}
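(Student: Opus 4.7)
The plan is to expand the right-hand side of \eqref{eq10} using the identity $({\bf a}+{\bf b})^2={\bf a}^2+{\bf b}^2+{\bf a}\circ{\bf b}$, which is stated in Section~\ref{s2}, and then collect terms while exploiting $\operatorname{char}\FF_2=2$ and the parity hypothesis on $r$. Set ${\bf s}:={\bf x}_1+\cdots+{\bf x}_r$ for brevity, so that the LHS equals ${\bf x}_1^2+\cdots+{\bf x}_r^2+{\bf s}^2+{\bf v}^2$ and the RHS equals $({\bf s}+{\bf v})^2+\sum_{i=1}^{r}({\bf x}_i+{\bf v})^2$.

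First, I would apply the expansion identity to each summand on the RHS:
\begin{align*}
({\bf s}+{\bf v})^2 &= {\bf s}^2+{\bf v}^2+{\bf s}\circ{\bf v},\\
({\bf x}_i+{\bf v})^2 &= {\bf x}_i^2+{\bf v}^2+{\bf x}_i\circ{\bf v} \qquad (i=1,\ldots,r).
\end{align*}
Summing the second line over $i$ and using bilinearity of $\circ$, the cross terms accumulate to $\sum_{i=1}^{r}{\bf x}_i\circ{\bf v}=\bigl(\sum_{i=1}^{r}{\bf x}_i\bigr)\circ{\bf v}={\bf s}\circ{\bf v}$, while the $r$ copies of ${\bf v}^2$ contribute $r\cdot{\bf v}^2$.

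At this point the key step is to invoke the parity hypothesis: since $r$ is even and we work in characteristic two, $r\cdot{\bf v}^2={\bf 0}$. Adding the expansion of $({\bf s}+{\bf v})^2$ then yields
\[
\text{RHS}=\sum_{i=1}^{r}{\bf x}_i^2+{\bf s}\circ{\bf v}+{\bf s}^2+{\bf v}^2+{\bf s}\circ{\bf v}=\sum_{i=1}^{r}{\bf x}_i^2+{\bf s}^2+{\bf v}^2,
\]
where the two copies of ${\bf s}\circ{\bf v}$ cancel. This matches the LHS and completes the proof.

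There is no real obstacle here; the only points that require a moment's attention are the use of $\operatorname{char}\FF=2$ to kill $r\cdot{\bf v}^2$ (which is precisely where the evenness of $r$ is used) and the cancellation of the two instances of ${\bf s}\circ{\bf v}$ arising from the single ${\bf s}+{\bf v}$ term and the sum over $i$.
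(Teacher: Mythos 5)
Your proof is correct and follows essentially the same route as the paper: expand every square on the right-hand side via $({\bf a}+{\bf b})^2={\bf a}^2+{\bf b}^2+{\bf a}\circ{\bf b}$, cancel the two copies of $({\bf x}_1+\cdots+{\bf x}_r)\circ{\bf v}$, and kill $r\cdot{\bf v}^2$ using that $r$ is even in characteristic two. No issues.
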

\begin{proof}
Since $({\bf z}+{\bf w})^2={\bf z}^2+{\bf w}^2+{\bf z}\circ {\bf w}$, the right-hand side of~\eqref{eq10} equals
\begin{align*}
&({\bf x}_1+\cdots+{\bf x}_r)^2+{\bf v}^2+({\bf x}_1+\cdots+{\bf x}_r)\circ {\bf v}+\sum_{i=1}^r ({\bf x}_i^2+{\bf v}^2+{\bf x}_i\circ{\bf v})\\
=&({\bf x}_1+\cdots+{\bf x}_r)^2+{\bf v}^2+ r\cdot {\bf v}^2 + \sum_{i=1}^r {\bf x}_i^2,
\end{align*}
which equals the left-hand side of~\eqref{eq10} because $r$ is even.
\end{proof}

\begin{lemma}\label{lema-pomozna}
Let ${\bf x}_1,\ldots,{\bf x}_r\in \FF_2^n$ be linearly independent vectors. If $\sum_{i=1}^r {\bf x}_i^2=\sum_{i=1}^r {\bf y}_i^2$ or $r$ is even and $\sum_{i=1}^r {\bf x}_i^2 + ({\bf x}_1+\cdots +{\bf x}_r)^2=\sum_{i=1}^r {\bf y}_i^2+({\bf y}_1+\cdots +{\bf y}_r)^2$ for some ${\bf y}_1,\ldots,{\bf y}_r\in \FF_2^n$, then $\langle {\bf x}_1,\ldots,{\bf x}_r\rangle=\langle {\bf y}_1,\ldots,{\bf y}_r\rangle$.
\end{lemma}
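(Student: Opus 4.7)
The strategy is to identify the span $\langle {\bf x}_1,\ldots,{\bf x}_r\rangle$ intrinsically as the column space of the matrix appearing on the left of the hypothesis, and to read off the same span analogously from the right. In both cases the displayed sum can be written as $A=ZZ^\tr$ for an appropriate $n\times m$ matrix $Z$ (with $m=r$ or $m=r+1$) whose column space equals $\langle {\bf x}_1,\ldots,{\bf x}_r\rangle$. Since the column space of $A$ is automatically contained in that of $Z$, the whole argument reduces to verifying that $\rank A=r$, which collapses the chain of containments to an equality.

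\emph{First case.} Let $X=[{\bf x}_1\mid\cdots\mid {\bf x}_r]$, so $A=XX^\tr$. Linear independence of the columns of $X$ gives $\ker X=\{{\bf 0}\}$, and the equivalence $XX^\tr {\bf v}={\bf 0}\Leftrightarrow X^\tr{\bf v}\in\ker X=\{{\bf 0}\}$ yields $\ker(XX^\tr)=\ker X^\tr$, so $\rank A=r$. The column space of $A$ is therefore an $r$-dimensional subspace of the $r$-dimensional space $\langle {\bf x}_1,\ldots,{\bf x}_r\rangle$, so the two agree. Replacing $X$ by $Y=[{\bf y}_1\mid\cdots\mid {\bf y}_r]$ gives the column space of $A$ contained in $\langle {\bf y}_1,\ldots,{\bf y}_r\rangle$; since the left side has dimension $r$, the right side does as well and the containment is an equality. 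Combining both identifications yields the desired equality of spans.

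\emph{Second case.} Take $Z=[{\bf x}_1\mid\cdots\mid {\bf x}_r\mid {\bf x}_1+\cdots+{\bf x}_r]\in M_{n\times (r+1)}(\FF_2)$, so that $A=ZZ^\tr$. Since the last column is the sum of the first $r$, the column space of $Z$ equals $\langle {\bf x}_1,\ldots,{\bf x}_r\rangle$ and $\ker Z=\{{\bf 0},{\bf j}_{r+1}\}$. Now ${\bf v}\in\ker(ZZ^\tr)$ iff $Z^\tr{\bf v}\in\ker Z$; the branch $Z^\tr{\bf v}={\bf j}_{r+1}$ would require ${\bf x}_i^\tr{\bf v}=1$ for $1\le i\le r$ together with $({\bf x}_1+\cdots+{\bf x}_r)^\tr{\bf v}=1$, but summing the first $r$ of these equations over $\FF_2$ forces $({\bf x}_1+\cdots+{\bf x}_r)^\tr{\bf v}=0$ (because $r$ is even), contradicting the last. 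Hence $\ker(ZZ^\tr)=\ker Z^\tr$ has dimension $n-r$, so $\rank A=r$, and the chain collapses to give the column space of $A$ equal to $\langle {\bf x}_1,\ldots,{\bf x}_r\rangle$. Running the identical computation with $W=[{\bf y}_1\mid\cdots\mid {\bf y}_r\mid {\bf y}_1+\cdots+{\bf y}_r]$ puts the column space of $A$ inside $\langle {\bf y}_1,\ldots,{\bf y}_r\rangle$, and matching dimensions completes the proof.

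The main obstacle is the rank computation in the second case: because $Z$ already has dependent columns, the clean identity $\ker(ZZ^\tr)=\ker Z^\tr$ is no longer automatic, and one must explicitly rule out the parasitic branch $Z^\tr{\bf v}={\bf j}_{r+1}$. This is precisely where the parity assumption on $r$ enters; were $r$ odd, that branch could genuinely contribute, $\rank A$ could drop below $r$, and the identification of spans would fail.
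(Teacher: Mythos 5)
Your proof is correct. It rests on the same underlying fact as the paper's argument, namely that the common matrix $A$ has rank $r$, but the execution differs in two small ways. For the inclusion $\langle {\bf x}_1,\ldots,{\bf x}_r\rangle\subseteq\langle {\bf y}_1,\ldots,{\bf y}_r\rangle$ the paper argues by contradiction: if some ${\bf y}_j$ lay outside $\langle {\bf x}_1,\ldots,{\bf x}_r\rangle$, then adjoining ${\bf y}_j^2$ would raise the rank to $r+1$ when computed from the $x$-side but leave it at most $r$ when computed from the $y$-side. You instead identify the column space of $A$ with $\langle {\bf x}_1,\ldots,{\bf x}_r\rangle$ outright and use the trivial containment $\mathrm{col}(WW^{\tr})\subseteq\mathrm{col}(W)$ plus a dimension count, which avoids the contradiction entirely. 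You also prove the rank claim $\rank A=r$ explicitly (your kernel computation ruling out the branch $Z^{\tr}{\bf v}={\bf j}_{r+1}$ is exactly where the parity of $r$ enters), whereas the paper asserts it, implicitly relying on the canonical forms of nonalternate and alternate symmetric matrices recalled in Section~2. Both routes are short and correct; yours is slightly more self-contained.
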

\begin{proof}
Let $A_x\in \left\{\sum_{i=1}^r {\bf x}_i^2, \sum_{i=1}^r {\bf x}_i^2 + ({\bf x}_1+\cdots +{\bf x}_r)^2\right\}$. Then, $\rank A_x=r$. If there exists $j$ such that ${\bf y}_{j}\notin \langle {\bf x}_1,\ldots,{\bf x}_r\rangle$, then the the equation $A_x=A_y$ where $A_y$ is defined analogously as $A_x$ implies that $r+1=\rank(A_x-{\bf y}_j^2)=\rank(A_{y}-{\bf y}_j^2)\leq r$, a contradiction. Hence, ${\bf y}_{j}\in \langle {\bf x}_1,\ldots,{\bf x}_r\rangle$ for all $j$. Since $\rank A_y=r$, it follows that $\langle {\bf x}_1,\ldots,{\bf x}_r\rangle=\langle {\bf y}_1,\ldots,{\bf y}_r\rangle$.
\end{proof}

Given ${\bf x}\in\FF_2^n$, let $\overline{{\bf x}}\in \FF_2^{n+1}$ be defined by
$$\overline{{\bf x}}=\left(\begin{array}{c}{\bf x}\\{\bf x}^{\tr}{\bf x}\end{array}\right).$$
\begin{lemma}\label{orthocode1}
Let $\rank \big([{\bf x}_i^{\tr}{\bf x}_j]_{i,j=1}^r\big)\leq 1$ for some ${\bf x}_1,\ldots,{\bf x}_r\in \FF_2^n$. Then, $\langle \overline{{\bf x}}_1,\ldots,\overline{{\bf x}}_r \rangle$ is a self-orthogonal code in $\FF_2^{n+1}$. If ${\bf x}_1,\ldots,{\bf x}_r$ are linearly independent, then $r\leq \big\lfloor\frac{n+1}{2}\big\rfloor$.
\end{lemma}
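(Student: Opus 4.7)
The plan is to verify self-orthogonality by a direct computation of $\overline{\bf x}_i^{\tr}\overline{\bf x}_j$ in $\FF_2^{n+1}$ and then to exploit the rank hypothesis together with the binary identity $\alpha^2=\alpha$ for $\alpha\in\FF_2$. Setting $a_{ij}:={\bf x}_i^{\tr}{\bf x}_j$ and writing $A=[a_{ij}]_{i,j=1}^r$, a block computation yields
$$\overline{\bf x}_i^{\tr}\overline{\bf x}_j={\bf x}_i^{\tr}{\bf x}_j+({\bf x}_i^{\tr}{\bf x}_i)({\bf x}_j^{\tr}{\bf x}_j)=a_{ij}+a_{ii}a_{jj}.$$
Hence the entire lemma reduces to showing the pointwise identity $a_{ij}=a_{ii}a_{jj}$ for all $i,j$, since this will give $\overline{\bf x}_i\in\langle\overline{\bf x}_1,\ldots,\overline{\bf x}_r\rangle^{\bot}$ for every $i$, and therefore self-orthogonality of the spanned code.

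To establish this identity, I would split on $\rank A\in\{0,1\}$. If $A=0$ the identity is trivial. If $\rank A=1$ then, since alternate matrices have even rank, $A$ is nonalternate; the structural fact recalled in Section~\ref{s2} (every nonalternate symmetric matrix over $\FF_2$ of rank $r$ equals a sum of $r$ squares ${\bf v}^2$) therefore forces $A={\bf v}{\bf v}^{\tr}={\bf v}^2$ for some nonzero ${\bf v}=(v_1,\ldots,v_r)^{\tr}\in\FF_2^r$. Then $a_{ij}=v_iv_j$ while $a_{ii}a_{jj}=v_i^2v_j^2=v_iv_j$ in characteristic two, so $a_{ij}=a_{ii}a_{jj}$ as required. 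This completes the self-orthogonality claim.

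For the dimension bound, the projection $\FF_2^{n+1}\to\FF_2^n$ onto the first $n$ coordinates sends $\overline{\bf x}_i$ to ${\bf x}_i$, so the linear independence of ${\bf x}_1,\ldots,{\bf x}_r$ transfers to $\overline{\bf x}_1,\ldots,\overline{\bf x}_r$. Hence the self-orthogonal code $\langle\overline{\bf x}_1,\ldots,\overline{\bf x}_r\rangle\subseteq\FF_2^{n+1}$ has dimension $r$, and the general upper bound $\dim C\leq\lfloor\frac{n+1}{2}\rfloor$ for self-orthogonal codes recalled in Section~\ref{s2} gives $r\leq\lfloor\frac{n+1}{2}\rfloor$.

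There is essentially no substantive obstacle: the only delicate point is invoking the rank-one structure correctly, but this is immediate once one observes that a symmetric matrix over $\FF_2$ of rank one is automatically nonalternate, and that $v_i^2=v_i$ turns $a_{ii}$ precisely into the $i$-th coordinate of the outer factor ${\bf v}$, which is exactly what is needed for the cancellation $a_{ij}+a_{ii}a_{jj}=0$.
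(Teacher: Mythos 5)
Your proof is correct and follows essentially the same route as the paper: both reduce self-orthogonality to the identity ${\bf x}_i^{\tr}{\bf x}_j=({\bf x}_i^{\tr}{\bf x}_i)\cdot({\bf x}_j^{\tr}{\bf x}_j)$ and then obtain the bound $r\leq\lfloor\frac{n+1}{2}\rfloor$ from linear independence together with the standard dimension bound for self-orthogonal codes. The only cosmetic difference is that the paper derives this identity directly from the vanishing of all $2\times 2$ minors of the Gram matrix combined with $\alpha^2=\alpha$ in $\FF_2$, whereas you pass through the equivalent outer-product decomposition $[{\bf x}_i^{\tr}{\bf x}_j]_{i,j}={\bf v}{\bf v}^{\tr}$ of a rank-one symmetric (hence nonalternate) binary matrix.
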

\begin{proof}
Since $[{\bf x}_i^{\tr} {\bf x}_j]_{i,j=1}^{r}$ is of rank $\leq 1$, all its $2\times 2$ minors vanish. Hence, $$\overline{{\bf x}}_i^{\tr} \overline{{\bf x}}_j={\bf x}_i^{\tr} {\bf x}_j+{\bf x}_i^{\tr} {\bf x}_i\cdot {\bf x}_j^{\tr} {\bf x}_j={\bf x}_i^{\tr} {\bf x}_i\cdot {\bf x}_j^{\tr} {\bf x}_j-({\bf x}_i^{\tr} {\bf x}_j)^{2}=0$$ for all $i,j$ and the code $\langle \overline{{\bf x}}_1,\ldots,\overline{{\bf x}}_r \rangle$ is self-orthogonal. If ${\bf x}_1,\ldots,{\bf x}_r\in\FF_2^n$ are linearly independent, then $\dim \langle \overline{{\bf x}}_1,\ldots,\overline{{\bf x}}_r \rangle=r$ and therefore $r\leq \big\lfloor\frac{n+1}{2}\big\rfloor$.
\end{proof}

We end this section by recalling a version of Witt theorem for nonalternate symmetric bilinear forms over a field with characteristic two. Lemma~\ref{witt-osnovna} is a special case of~\cite[Theorem~3]{pless}.

\begin{lemma}\label{witt-osnovna}
Let $U$ be a vector subspace in $\FF_2^n$ and let $\sigma: U\to\FF_2^n$ be an injective linear map such that
\begin{equation}\label{eq24}
\sigma({\bf u}_1)^{\tr}\sigma({\bf u}_2)={\bf u}_1^{\tr} {\bf u}_2
\end{equation}
for all ${\bf u}_1, {\bf u}_2\in U$. Then, $\sigma$ can be extended to an injective linear map $\sigma: \FF_2^n\to\FF_2^n$ such that~\eqref{eq24} holds for all ${\bf u}_1, {\bf u}_2\in \FF_2^n$ if and only if the following two conditions are satisfied:
\begin{enumerate}
\item ${\bf j}_n\in U$ if and only if ${\bf j}_n\in \sigma(U)$,
\item if ${\bf j}_n\in U$, then $\sigma({\bf j}_n)={\bf j}_n$.
\end{enumerate}
\end{lemma}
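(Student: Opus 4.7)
The forward implication is a routine verification. If $\sigma$ extends to an injective linear endomorphism of $\FF_2^n$ satisfying~\eqref{eq24} globally, then its matrix $M$ obeys $M^{\tr}M=I_n$, so $M^{-1}=M^{\tr}$. Over $\FF_2$ one has $x_i^2=x_i$, which makes the quadratic form coincide with the linear functional ${\bf x}\mapsto{\bf j}_n^{\tr}{\bf x}$; preservation of this form by $\sigma$ gives $M^{\tr}{\bf j}_n={\bf j}_n$, and combined with $M^{\tr}=M^{-1}$ this forces $M{\bf j}_n={\bf j}_n$. Conditions~(i) and~(ii) follow immediately from this and the injectivity of $M$.

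For sufficiency, the plan is to extend $\sigma$ one vector at a time, inducting on $n-\dim U$. Fix ${\bf v}\in\FF_2^n\setminus U$ and seek an admissible image ${\bf w}$ to play the role of $\sigma({\bf v})$. The bilinear constraints ${\bf w}^{\tr}\sigma({\bf u})={\bf v}^{\tr}{\bf u}$ for ${\bf u}\in U$ describe an affine set $S$, namely a translate of $\sigma(U)^{\perp}$; this set is nonempty because $\sigma$ is already an isometry on $U$. The only additional requirement is ${\bf w}^{\tr}{\bf w}={\bf v}^{\tr}{\bf v}$, which over $\FF_2$ reduces to the single linear equation ${\bf j}_n^{\tr}{\bf w}={\bf j}_n^{\tr}{\bf v}$. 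The hypotheses enter precisely here: if ${\bf j}_n\in U$, then by~(ii) the equation $\sigma({\bf j}_n)^{\tr}{\bf w}={\bf j}_n^{\tr}{\bf v}$ is already one of the defining equations of $S$, so the quadratic constraint is automatic; if ${\bf j}_n\notin U$, then by~(i) we have ${\bf j}_n\notin\sigma(U)=\sigma(U)^{\perp\perp}$, so ${\bf j}_n^{\tr}$ is a nonzero functional on $\sigma(U)^{\perp}$ and therefore nonconstant on $S$, and since $|\FF_2|=2$ it attains both possible values on $S$. In either case an admissible ${\bf w}$ exists.

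Two tasks remain: selecting such a ${\bf w}$ outside $\sigma(U)$ (so the extension stays injective on $U'=U+\langle{\bf v}\rangle$), and verifying that the enlarged pair $U'$, $\sigma(U')$ still satisfies~(i) and~(ii) so the induction continues. Injectivity is handled by noting that if every admissible ${\bf w}$ lay in $\sigma(U)$, then the matching of the bilinear form together with the injectivity of $\sigma$ on $U$ would force ${\bf v}\in U$, a contradiction; preservation of~(i) and~(ii) splits into cases according to whether ${\bf j}_n\in U$ and whether ${\bf j}_n\in U'\setminus U$, the delicate case being the latter, in which ${\bf v}\in{\bf j}_n+U$ and one must arrange $\sigma({\bf v})\equiv{\bf j}_n\pmod{\sigma(U)}$, which is compatible with the freedom in choosing ${\bf w}$ established above. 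The main obstacle is exactly this joint bookkeeping of injectivity, the quadratic linear constraint, and the propagation of (i) and (ii) across the induction, especially when the restriction of the form to $U$ is degenerate so that $\sigma(U)\cap\sigma(U)^{\perp}\neq\{{\bf 0}\}$; as the authors note, this is handled uniformly by the general result~\cite[Theorem~3]{pless}.
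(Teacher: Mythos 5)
The paper does not actually prove this lemma: it is stated as ``a special case of~\cite[Theorem~3]{pless}'' and no argument is given. Your necessity direction is correct and complete (over $\FF_2$ the quadratic form ${\bf x}\mapsto{\bf x}^{\tr}{\bf x}$ is the linear functional ${\bf x}\mapsto{\bf j}_n^{\tr}{\bf x}$, an isometry's matrix $M$ satisfies $M^{\tr}M=I_n$ and $M^{\tr}{\bf j}_n={\bf j}_n$, hence $M{\bf j}_n={\bf j}_n$, and (i)--(ii) follow), and this is a genuine addition since the paper supplies nothing. Your analysis of when the quadratic constraint can be met on the affine set $S$ is also sound in both cases (${\bf j}_n\in U$ versus ${\bf j}_n\notin U$).

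The gap is in the injectivity step of your sufficiency induction. You claim that if every admissible ${\bf w}$ lay in $\sigma(U)$, then ``the matching of the bilinear form together with the injectivity of $\sigma$ on $U$ would force ${\bf v}\in U$.'' It does not. If ${\bf w}=\sigma({\bf u}_0)$ is admissible, the bilinear constraints give ${\bf u}_0^{\tr}{\bf u}={\bf v}^{\tr}{\bf u}$ for all ${\bf u}\in U$, i.e.\ only ${\bf u}_0+{\bf v}\in U^{\perp}$; this is perfectly compatible with ${\bf v}\notin U$ unless $U^{\perp}\subseteq U$. To rule out that the entire admissible coset (a translate of $\sigma(U)^{\perp}$, or of $\sigma(U)^{\perp}\cap\langle{\bf j}_n\rangle^{\perp}$) sits inside $\sigma(U)$, one needs a separate containment/dimension argument, and this is exactly where the characteristic-two degeneracies make Witt-type extension delicate. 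The propagation of (i)--(ii) to $U'=U+\langle{\bf v}\rangle$ is likewise only gestured at. Since you ultimately fall back on \cite[Theorem~3]{pless} for precisely these points --- which is all the paper itself does --- the proposal is acceptable as a citation-based proof, but the intermediate sketch should not be read as a near-complete independent argument: its key closing step is unjustified as written.
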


\section{More auxiliary results}
\label{s4}

\begin{lemma}\label{witt1}
Let $A\in \sgl$ be a nonalternate matrix where $n\geq 5$ and assume that vectors ${\bf x}_1,{\bf x}_2,{\bf x}_3\in\FF_2^n$ are linearly independent. If
$$[{\bf x}_i^{\tr} A^{-1}{\bf x}_j]_{i,j=1}^{3}=\left(\begin{array}{ccc}1&1&0\\
1&1&0\\
0&0&0
\end{array}\right),$$
then there exists ${\bf w}\in \FF_2^n$ such that
\begin{equation}\label{eq26}
{\bf w}^{\tr}A^{-1}{\bf w}=1={\bf w}^{\tr}A^{-1}{\bf x}_1,\quad {\bf w}^{\tr}A^{-1}{\bf x}_2=0={\bf w}^{\tr}A^{-1}{\bf x}_3
\end{equation}
or
\begin{equation}\label{eq27}
{\bf w}^{\tr}A^{-1}{\bf w}=1={\bf w}^{\tr}A^{-1}{\bf x}_2,\quad {\bf w}^{\tr}A^{-1}{\bf x}_1=0={\bf w}^{\tr}A^{-1}{\bf x}_3.
\end{equation}
\end{lemma}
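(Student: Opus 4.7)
My plan is to leverage a characteristic-two phenomenon: the map $Q({\bf w}) := {\bf w}^{\tr} A^{-1} {\bf w}$ is actually a \emph{linear} function of ${\bf w}$ on $\FF_2^n$. Indeed, using $w_i^2 = w_i$ in $\FF_2$ and the cancellation in pairs of the off-diagonal terms of the symmetric matrix $A^{-1}$, one computes $Q({\bf w}) = {\bf d}^{\tr} {\bf w}$, where ${\bf d}$ is the diagonal of $A^{-1}$. Moreover, the identity ${\bf x}^{\tr} A^{-1} {\bf x} = (A^{-1}{\bf x})^{\tr} A (A^{-1}{\bf x})$ together with the bijectivity of $A^{-1}$ shows that $A^{-1}$ is nonalternate if and only if $A$ is, so ${\bf d} \neq {\bf 0}$.

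Next I would introduce the linear map $\Phi\colon \FF_2^n \to \FF_2^3$ defined by $\Phi({\bf w}) = ({\bf x}_1^{\tr} A^{-1} {\bf w},\, {\bf x}_2^{\tr} A^{-1} {\bf w},\, {\bf x}_3^{\tr} A^{-1} {\bf w})$. Since ${\bf x}_1, {\bf x}_2, {\bf x}_3$ are linearly independent and $A^{-1}$ is invertible, $\Phi$ is surjective with kernel $V$ of dimension $n - 3$. Conditions \eqref{eq26} and \eqref{eq27} each ask for a ${\bf w}$ in a prescribed coset of $V$ on which $Q({\bf w}) = 1$, namely in $\Phi^{-1}(1, 0, 0)$ and $\Phi^{-1}(0, 1, 0)$, respectively. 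Because $Q$ is linear, either $Q|_V \not\equiv 0$, in which case $Q$ takes both values on every coset of $V$ and \eqref{eq26} follows immediately, or else $Q|_V \equiv 0$, which (by standard double-perp in $\FF_2^n$) is equivalent to ${\bf d} \in V^{\perp} = \langle A^{-1} {\bf x}_1,\, A^{-1} {\bf x}_2,\, A^{-1} {\bf x}_3\rangle$.

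In the remaining case, I would write ${\bf d} = \alpha A^{-1} {\bf x}_1 + \beta A^{-1} {\bf x}_2 + \gamma A^{-1} {\bf x}_3$ for some $\alpha, \beta, \gamma \in \FF_2$, and evaluate $Q({\bf x}_i) = {\bf d}^{\tr} {\bf x}_i$ against the prescribed Gram matrix. The equations for $i = 1$ and $i = 2$ both reduce to $\alpha + \beta = 1$, while $i = 3$ is automatic. Thus exactly one of $\alpha, \beta$ equals $1$. If $\alpha = 1$, then $Q({\bf w}) = {\bf x}_1^{\tr} A^{-1} {\bf w} + \gamma {\bf x}_3^{\tr} A^{-1} {\bf w}$ equals $1$ on every ${\bf w} \in \Phi^{-1}(1, 0, 0)$, yielding \eqref{eq26}; if $\beta = 1$, the symmetric computation on $\Phi^{-1}(0, 1, 0)$ yields \eqref{eq27}. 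The main conceptual obstacle is spotting that $Q$ is linear in characteristic two; once this is in hand, the dichotomy on $Q|_V$ and the single relation $\alpha + \beta = 1$ close the argument without invoking the full Witt machinery of Lemma~\ref{witt-osnovna}.
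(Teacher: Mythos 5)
Your proof is correct, and it takes a genuinely different and substantially more elementary route than the paper's. The decisive observation is that in characteristic two the map $Q({\bf w})={\bf w}^{\tr}A^{-1}{\bf w}$ is the \emph{linear} functional ${\bf w}\mapsto{\bf d}^{\tr}{\bf w}$, where ${\bf d}$ is the diagonal of $A^{-1}$; this turns the lemma into a statement about one linear functional restricted to the two cosets $\Phi^{-1}(1,0,0)$ and $\Phi^{-1}(0,1,0)$ of $V=\ker\Phi$. I checked the details: $\Phi$ is surjective because $A^{-1}{\bf x}_1,A^{-1}{\bf x}_2,A^{-1}{\bf x}_3$ are linearly independent; if $Q|_V\not\equiv 0$ then $Q$ takes both values on every coset and \eqref{eq26} follows at once; otherwise ${\bf d}\in V^{\perp}=\langle A^{-1}{\bf x}_1,A^{-1}{\bf x}_2,A^{-1}{\bf x}_3\rangle$ (double-perp for the nondegenerate standard form on $\FF_2^n$), the evaluations $Q({\bf x}_1)=Q({\bf x}_2)=1$, $Q({\bf x}_3)=0$ against the prescribed Gram matrix give exactly $\alpha+\beta=1$ with the $i=3$ equation automatic, and in each of the two resulting subcases $Q$ is identically $1$ on the appropriate coset, yielding \eqref{eq26} or \eqref{eq27}. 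The paper instead factors $A^{-1}=P^{\tr}P$, transports the problem to the standard dot product, and invokes the Witt extension theorem for nonalternate symmetric forms (Lemma~\ref{witt-osnovna}) to extend an isometry from a three-dimensional model configuration, which forces a case analysis on whether ${\bf j}_n$ lies in $\langle P{\bf x}_1,P{\bf x}_2,P{\bf x}_3\rangle$, on the parity of $n$, and a separate argument excluding $n=5$ in one branch --- this is where the hypothesis $n\geq 5$ is consumed. Your argument never uses $n\geq 5$ (nor the nonalternateness of $A$, which the Gram matrix already forces), so it actually proves a slightly stronger statement; what the paper's heavier machinery buys is only uniformity with the neighbouring Lemma~\ref{witt2}, where the same Witt-theorem template is reused.
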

\begin{proof}
Since $A^{-1}$ is nonalternate, there exists $P\in GL_n(\FF_2)$ such that $A^{-1}=P^{\tr}P$. Denote $\dot{\bf x}_i=P{\bf x}_i$ for all $i$. Then,
$$[\dot{{\bf x}}_i^{\tr}\dot{{\bf x}}_j]_{i,j=1}^{3}=\left(\begin{array}{ccc}1&1&0\\
1&1&0\\
0&0&0
\end{array}\right).$$
We split the proof in two cases.
\begin{myenumerate}{Case}
\item Let ${\bf j}_n\in\langle \dot{\bf x}_1, \dot{\bf x}_2, \dot{\bf x}_3\rangle$. Suppose that $n$ is even. Then, ${\bf j}_n^{\tr}{\bf j}_n=0$ and ${\bf j}_n\in \{\dot{\bf x}_3,\dot{\bf x}_1+\dot{\bf x}_2,\dot{\bf x}_1+\dot{\bf x}_2+\dot{\bf x}_3\}$. Since ${\bf j}_n^{\tr}\dot{\bf x}_1=\dot{\bf x}_1^{\tr}\dot{\bf x}_1=1$, and values $\dot{\bf x}_3^{\tr}\dot{\bf x}_1$, $(\dot{\bf x}_1+\dot{\bf x}_2)^{\tr}\dot{\bf x}_1$, $(\dot{\bf x}_1+\dot{\bf x}_2+\dot{\bf x}_3)^{\tr}\dot{\bf x}_1$ are all zero, we have a contradiction. Hence, $n$ is odd, ${\bf j}_n^{\tr}{\bf j}_n=1$, and ${\bf j}_n\in \{\dot{\bf x}_1,\dot{\bf x}_2,\dot{\bf x}_1+\dot{\bf x}_3,\dot{\bf x}_2+\dot{\bf x}_3\}$. We consider two subcases.
\begin{myenumerateB}{Subcase}
\item\label{subcase1} Let ${\bf j}_n\in \{\dot{\bf x}_1,\dot{\bf x}_1+\dot{\bf x}_3\}$. Define vectors ${\bf y}_1={\bf j}_n$,  ${\bf y}_2={\bf e}_1$,  ${\bf y}_3={\bf e}_2+{\bf e}_3$ and the map $\sigma: \langle {\bf y}_1, {\bf y}_2, {\bf y}_3\rangle\to\FF_2^n$ by
    $$\sigma(\alpha_1 {\bf y}_1+ \alpha_2 {\bf y}_2+\alpha_3 {\bf y}_3)=\alpha_1 {\bf j}_n+ \alpha_2 \dot{{\bf x}}_2+\alpha_3 \dot{{\bf x}}_3\qquad (\alpha_1, \alpha_2, \alpha_3\in\FF_2).$$
    Then, $\sigma$ is linear and injective. Moreover,
    $$\sigma({\bf j}_n)={\bf j}_n\in \langle {\bf y}_1, {\bf y}_2, {\bf y}_3\rangle\cap \sigma(\langle {\bf y}_1, {\bf y}_2, {\bf y}_3\rangle),$$
    and
    \begin{equation}\label{eq28}
    [{\bf y}_i^{\tr}{\bf y}_j]_{i,j=1}^{3}=\left(\begin{array}{ccc}1&1&0\\
1&1&0\\
0&0&0
\end{array}\right)=[\sigma({\bf y}_i)^{\tr}\sigma({\bf y}_j)]_{i,j=1}^{3}.
\end{equation}
Consequently,
\begin{equation}\label{eq25}
\sigma({\bf y})^{\tr}\sigma({\bf z})={\bf y}^{\tr}{\bf z}
\end{equation}
for all ${\bf y},{\bf z}\in \langle {\bf y}_1, {\bf y}_2, {\bf y}_3\rangle$. By Lemma~\ref{witt-osnovna}, we can extend $\sigma$ to a linear map $\sigma$ on $\FF_2^n$ such that~\eqref{eq25} holds for all  ${\bf y},{\bf z}\in\FF_2^n$. Then ${\bf w}=P^{-1}\sigma({\bf e}_5)$ satisfies
\begin{align*}
{\bf w}^{\tr}A^{-1}{\bf w}&=\sigma({\bf e}_5)^{\tr}\sigma({\bf e}_5)={\bf e}_5^{\tr}{\bf e}_5=1,\\
{\bf w}^{\tr}A^{-1}{\bf x}_1&=\sigma({\bf e}_5)^{\tr}\dot{{\bf x}}_1\in \{\sigma({\bf e}_5)^{\tr}\sigma({\bf j}_n), \sigma({\bf e}_5)^{\tr}\sigma({\bf j}_n+{\bf y}_3)\}\\
&\phantom{a}\qquad\qquad\qquad=\{{\bf e}_5^{\tr}{\bf j}_n, {\bf e}_5^{\tr}({\bf j}_n+{\bf y}_3)\}=\{1\},\\
{\bf w}^{\tr}A^{-1}{\bf x}_2&=\sigma({\bf e}_5)^{\tr}\sigma({\bf y}_2)={\bf e}_5^{\tr}{\bf y}_2=0,\\
{\bf w}^{\tr}A^{-1}{\bf x}_3&=\sigma({\bf e}_5)^{\tr}\sigma({\bf y}_3)={\bf e}_5^{\tr}{\bf y}_3=0.
\end{align*}
Hence, \eqref{eq26} is true.

\item Let ${\bf j}_n\in \{\dot{\bf x}_2,\dot{\bf x}_2+\dot{\bf x}_3\}$. Here we define vectors ${\bf y}_1={\bf e}_1$,  ${\bf y}_2={\bf j}_n$,  ${\bf y}_3={\bf e}_2+{\bf e}_3$ and the map $\sigma: \langle {\bf y}_1, {\bf y}_2, {\bf y}_3\rangle\to\FF_2^n$ by
    $$\sigma(\alpha_1 {\bf y}_1+ \alpha_2 {\bf y}_2+\alpha_3 {\bf y}_3)=\alpha_1 \dot{{\bf x}}_1+ \alpha_2 {\bf j}_n+\alpha_3 \dot{{\bf x}}_3\qquad (\alpha_1, \alpha_2, \alpha_3\in\FF_2).$$ Then we continue as in Subcase~\ref{subcase1} to deduce that ${\bf w}=P^{-1}\sigma({\bf e}_5)$ satisfies~\eqref{eq27}.
\end{myenumerateB}

\item\label{case2lemma4-1} Let ${\bf j}_n\notin\langle \dot{\bf x}_1, \dot{\bf x}_2, \dot{\bf x}_3\rangle$. Suppose $n=5$ and let $\dot{\bf x}_3=(\beta_1,\beta_2,\beta_3,\beta_4,\beta_5)^{\tr}$. Since $\dot{{\bf x}}_1, \dot{{\bf x}}_2$ are linearly independent, $1=\dot{{\bf x}}_1^{\tr} \dot{{\bf x}}_1=\dot{{\bf x}}_2^{\tr} \dot{{\bf x}}_2=\dot{{\bf x}}_1^{\tr} \dot{{\bf x}}_2$, and $\dot{\bf x}_1, \dot{\bf x}_2$ differ from ${\bf j}_5$, it follows that either
     $$\{\dot{\bf x}_1, \dot{\bf x}_2\}=\{{\bf e}_i, {\bf e}_i+{\bf e}_j+{\bf e}_k\}\quad \textrm{or}\quad \{\dot{\bf x}_1, \dot{\bf x}_2\}=\{{\bf e}_i+{\bf e}_j+{\bf e}_k,{\bf e}_i+{\bf e}_l+{\bf e}_m\}$$ for some indices $\{i,j,k,l,m\}=\{1,2,3,4,5\}$. In both cases, the equalities
     $0=\dot{{\bf x}}_1^{\tr} \dot{{\bf x}}_3=\dot{{\bf x}}_2^{\tr} \dot{{\bf x}}_3=\dot{{\bf x}}_3^{\tr} \dot{{\bf x}}_3$ imply that  $\beta_i=0$, $\beta_j=\beta_k$, and $\beta_l=\beta_m$. Hence,  $\dot{\bf x}_1 + \dot{\bf x}_3={\bf j}_5$ or  $\dot{\bf x}_2 + \dot{\bf x}_3={\bf j}_5$ or $\dot{\bf x}_1 +\dot{\bf x}_2 +\dot{\bf x}_3=0$, which contradicts the assumption of Case~\ref{case2lemma4-1} or the linear independence of $\dot{\bf x}_1, \dot{\bf x}_2, \dot{\bf x}_3$.

     Therefore, $n\geq 6$. Define vectors ${\bf y}_1={\bf e}_1+\cdots + {\bf e}_5$,  ${\bf y}_2={\bf e}_1$,  ${\bf y}_3={\bf e}_2+{\bf e}_3$ and the map $\sigma: \langle {\bf y}_1, {\bf y}_2, {\bf y}_3\rangle\to\FF_2^n$ by
    $$\sigma(\alpha_1 {\bf y}_1+ \alpha_2 {\bf y}_2+\alpha_3 {\bf y}_3)=\alpha_1 \dot{{\bf x}}_1+ \alpha_2 \dot{{\bf x}}_2+\alpha_3 \dot{{\bf x}}_3\qquad (\alpha_1, \alpha_2, \alpha_3\in\FF_2).$$
    Then, ${\bf j}_n\notin\langle \dot{\bf x}_1, \dot{\bf x}_2, \dot{\bf x}_3\rangle=\sigma(\langle {\bf y}_1, {\bf y}_2, {\bf y}_3\rangle)$. Since $n\geq 6$, it follows that ${\bf j}_n\notin \langle {\bf y}_1, {\bf y}_2, {\bf y}_3\rangle$. Moreover, \eqref{eq28} is still true, which implies \eqref{eq25} for all ${\bf y},{\bf z}\in \langle {\bf y}_1, {\bf y}_2, {\bf y}_3\rangle$. We continue as in Subcase~\ref{subcase1} to see that ${\bf w}=P^{-1}\sigma({\bf e}_5)$ fits~\eqref{eq26}.\qedhere
\end{myenumerate}
\end{proof}

\begin{lemma}\label{witt2}
Let ${\bf x}_1,\ldots,{\bf x}_r\in\FF_2^n$ be linearly independent vectors where $r$ is an even number such that $2\leq r\leq n$. If
$A\in \sgl$ and ${\bf x}_i^{\tr} A^{-1}{\bf x}_j=1$ for all $i,j\leq r$, then there exists ${\bf x}_{r+1}\in \FF_2^n$ such that
\begin{equation}\label{eq45}
{\bf x}_s^{\tr} A^{-1}{\bf x}_{r+1}=1\ \textrm{for some}\ s\in\{1,\ldots,r\}
\end{equation}
and
\begin{equation}\label{eq46}
{\bf x}_t A^{-1}{\bf x}_{r+1}=0\ \textrm{for all}\ t\in\{1,\ldots,r+1\}\backslash\{s\}.
\end{equation}
\end{lemma}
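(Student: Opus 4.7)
My approach is to first reduce to a question about the standard bilinear form on $\FF_2^n$ and then settle it by a dichotomy on whether ${\bf j}_n$ lies in the span of the transported vectors. Since ${\bf x}_1^{\tr}A^{-1}{\bf x}_1=1$, the matrix $A^{-1}$ is nonalternate, so $A^{-1}=P^{\tr}P$ for some $P\in GL_n(\FF_2)$. Setting $\dot{\bf x}_i:=P{\bf x}_i$ transports the problem into finding $\dot{\bf x}_{r+1}\in\FF_2^n$ with $\dot{\bf x}_s^{\tr}\dot{\bf x}_{r+1}=1$, $\dot{\bf x}_t^{\tr}\dot{\bf x}_{r+1}=0$ for $t\in\{1,\ldots,r\}\setminus\{s\}$, and $\dot{\bf x}_{r+1}^{\tr}\dot{\bf x}_{r+1}=0$; one then sets ${\bf x}_{r+1}:=P^{-1}\dot{\bf x}_{r+1}$. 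Note that $r=n$ cannot occur because, by Lemma~\ref{l2}, $\det(X^{\tr}A^{-1}X)=(\det X)^2\det(A^{-1})\neq 0$ would contradict $\det J_{r\times r}=0$; hence $r\leq n-1$, and in particular $\dot V:=\dot U^{\bot}$ has positive dimension, where $\dot U:=\langle \dot{\bf x}_1,\ldots,\dot{\bf x}_r\rangle$.

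Next I parametrize candidates for $\dot{\bf x}_{r+1}$. Since $\dot{\bf x}_1,\ldots,\dot{\bf x}_r$ are linearly independent, for each $s$ there is ${\bf w}_s\in\FF_2^n$ satisfying $\dot{\bf x}_i^{\tr}{\bf w}_s=\delta_{is}$, and the set of all vectors meeting the first $r$ requirements is the affine coset ${\bf w}_s+\dot V$. Over $\FF_2$ we have the identity ${\bf y}^{\tr}{\bf y}={\bf j}_n^{\tr}{\bf y}$, and the cross term in $({\bf w}_s+{\bf u})^{\tr}({\bf w}_s+{\bf u})$ vanishes in characteristic two, giving
\begin{equation*}
({\bf w}_s+{\bf u})^{\tr}({\bf w}_s+{\bf u})={\bf j}_n^{\tr}{\bf w}_s+{\bf j}_n^{\tr}{\bf u}.
\end{equation*}
So I must choose $s\in\{1,\ldots,r\}$ and ${\bf u}\in\dot V$ with ${\bf j}_n^{\tr}{\bf u}={\bf j}_n^{\tr}{\bf w}_s$.

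Finally I split on whether ${\bf j}_n\in\dot U$. If ${\bf j}_n\notin\dot U=\dot V^{\bot}$, then the linear form ${\bf u}\mapsto{\bf j}_n^{\tr}{\bf u}$ is not identically zero on $\dot V$ and therefore hits both values of $\FF_2$, so any $s$ works. If instead ${\bf j}_n\in\dot U$, write ${\bf j}_n=\sum_{i\in S}\dot{\bf x}_i$; pairing with $\dot{\bf x}_j$ gives $1=|S|\bmod 2$, so $|S|$ is odd, and because $r$ is even we get $S\subsetneq\{1,\ldots,r\}$. Choosing $s\in\{1,\ldots,r\}\setminus S$, one obtains ${\bf j}_n^{\tr}{\bf w}_s=\sum_{i\in S}\delta_{is}=0$ and ${\bf j}_n^{\tr}{\bf u}=\sum_{i\in S}\dot{\bf x}_i^{\tr}{\bf u}=0$ for every ${\bf u}\in\dot V$, so ${\bf u}=0$ suffices. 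I expect the main obstacle to be the case ${\bf j}_n\in\dot U$: there the quadratic form ${\bf u}\mapsto{\bf u}^{\tr}{\bf u}$ vanishes identically on $\dot V$, so ${\bf w}_s^{\tr}{\bf w}_s$ cannot be adjusted by translating within $\dot V$; the rescue is precisely to exploit the parity mismatch between odd $|S|$ and even $r$ to select $s$ outside the support $S$.
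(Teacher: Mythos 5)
Your proof is correct, and it takes a genuinely different and more elementary route than the paper. The paper transports the problem via $A^{-1}=P^{\tr}P$ exactly as you do and also splits on whether ${\bf j}_n\in\langle\dot{\bf x}_1,\ldots,\dot{\bf x}_r\rangle$, but from there it invokes the Witt extension theorem for nonalternate forms in characteristic two (Lemma~\ref{witt-osnovna}): it first derives the bound $r\leq\lfloor\frac{n+1}{2}\rfloor$ from Lemma~\ref{orthocode1}, builds a model configuration ${\bf y}_i=\sum_{j=1}^{2i-1}{\bf e}_j$ realizing the all-ones Gram matrix, extends the partial isometry $\;{\bf y}_i\mapsto\dot{\bf x}_i$ (after swapping one basis vector for ${\bf j}_n$ in the case ${\bf j}_n\in\langle\dot{\bf x}_1,\ldots,\dot{\bf x}_r\rangle$) to all of $\FF_2^n$, and takes ${\bf x}_{r+1}=P^{-1}\sigma({\bf e}_1+{\bf e}_2)$. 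You instead parametrize all candidates directly as an affine coset ${\bf w}_s+\dot V$ using dual vectors, observe that the quadratic constraint ${\bf y}^{\tr}{\bf y}=0$ is the \emph{linear} condition ${\bf j}_n^{\tr}{\bf y}=0$ over $\FF_2$, and resolve it by the same dichotomy — using only the singularity of $J_{r\times r}$ to get $r\leq n-1$ rather than the sharper code-theoretic bound. Your argument is shorter, avoids Witt's theorem entirely, and the parity argument in the case ${\bf j}_n\in\dot U$ (that the support $S$ of ${\bf j}_n$ has odd size while $r$ is even, so some $s\notin S$ exists) isolates exactly where the hypothesis that $r$ is even enters; the paper's approach buys a full isometry of $\FF_2^n$, which is more machinery than this lemma needs. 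One cosmetic remark: the determinant identity you attribute to Lemma~\ref{l2} is just multiplicativity of the determinant for the square case $r=n$, not the matrix-determinant formula of that lemma, but the fact itself is of course valid.
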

\begin{remark}\label{remark}
Observe that vectors ${\bf x}_1,\ldots, {\bf x}_{r+1}$ must be linearly independent. In fact, if we pre-multiply the equation ${\bf x}_{r+1}=\sum_{k=1}^{r}\alpha_k {\bf x}_k$ by ${\bf x}_s^{\tr}A^{-1}$ and by ${\bf x}_t^{\tr}A^{-1}$ where $t\in\{1,\ldots,r\}\backslash\{s\}$, respectively, then we deduce that $1=\sum_{k=1}^r \alpha_k=0$, which is not possible.
\end{remark}

\begin{proof}
Clearly, $A^{-1}$ is not alternate. Hence, there exists $P\in GL_n(\FF_2)$ such that $A^{-1}=P^{\tr}P$. Define $\dot{{\bf x}}_i=P{\bf x}_i$ for all $i$. Then, $\dot{{\bf x}}_i^{\tr}\dot{{\bf x}}_j=1$ for all $i,j$. In particular, $\rank\left([\dot{{\bf x}}_i^{\tr}\dot{{\bf x}}_j]_{i,j}^r\right)=1$.
By Lemma~\ref{orthocode1}, $r\leq \big\lfloor\frac{n+1}{2}\big\rfloor$, i.e. $n\geq 2r-1$. Firstly we prove that
\begin{equation}\label{eq29}
\textrm{if}\ n=2r-1,\ \textrm{then}\ {\bf j}_n\in \langle \dot{{\bf x}}_1,\ldots, \dot{{\bf x}}_r\rangle.
\end{equation}
In fact, if $n=2r-1$ and ${\bf j}_n\notin \langle \dot{{\bf x}}_1,\ldots, \dot{{\bf x}}_r\rangle$, then $\dot{{\bf x}}_1,\ldots, \dot{{\bf x}}_r, {\bf j}_n$ are linearly independent. Since $n$ is odd it follows that ${\bf j}_n^{\tr}{\bf j}_n=1=\dot{{\bf x}}_i^{\tr}\dot{{\bf x}}_i=\dot{{\bf x}}_i^{\tr}{\bf j}_n$ for all $i$. As above, Lemma~\ref{orthocode1} implies that $r+1\leq \big\lfloor\frac{n+1}{2}\big\rfloor=r$, a contradiction.

We split the rest of the proof in two cases.
\begin{myenumerate}{Case}
\item Let ${\bf j}_n\notin \langle \dot{{\bf x}}_1,\ldots, \dot{{\bf x}}_r\rangle$. Define vectors ${\bf y}_i=\sum_{j=1}^{2i-1} {\bf e}_j$ for  $i\in\{1,\ldots,r\}$ and the map $\sigma: \langle {\bf y}_1,\ldots, {\bf y}_r\rangle\to\FF_2^n$ by
    $$\sigma\left(\sum_{i=1}^r \alpha_i {\bf y}_i\right)=\sum_{i=1}^r \alpha_i \dot{{\bf x}}_i\qquad (\alpha_i\in\FF_2).$$
    Since $n\geq 2r$ by~\eqref{eq29}, it follows that ${\bf j}_n\notin \langle {\bf y}_1,\ldots,{\bf y}_r\rangle$. Since $\sigma(\langle {\bf y}_1,\ldots,{\bf y}_r\rangle)=\langle\dot{{\bf x}}_1,\ldots,\dot{{\bf x}}_r\rangle$, it follows that ${\bf j}_n\notin \sigma(\langle {\bf y}_1,\ldots,{\bf y}_r\rangle)$. Since ${\bf y}_i^{\tr}{\bf y}_j=1=\sigma({\bf y}_i)^{\tr}\sigma({\bf y}_j)$ for all $i,j$, we deduce~\eqref{eq25} for all ${\bf y}, {\bf z}\in \langle {\bf y}_1,\ldots,{\bf y}_r\rangle$. By Lemma~\ref{witt-osnovna}, we can linearly extend $\sigma$ on whole $\FF_2^n$ such that~\eqref{eq25} is true for all ${\bf y}, {\bf z}\in \FF_2^n$. If ${\bf x}_{r+1}:=P^{-1}\sigma({\bf e}_1+{\bf e}_2)$, then
    \begin{align*}
    {\bf x}_1^{\tr}A^{-1}{\bf x}_{r+1}&=\dot{{\bf x}}_1^{\tr}\sigma({\bf e}_1+{\bf e}_2)=\sigma({\bf y}_1)^{\tr}\sigma({\bf e}_1+{\bf e}_2)={\bf y}_1^{\tr}({\bf e}_1+{\bf e}_2)=1,\\
    {\bf x}_t^{\tr}A^{-1}{\bf x}_{r+1}&={\bf y}_t^{\tr}({\bf e}_1+{\bf e}_2)=0\qquad (t\in\{2,\ldots,r\}),\\
    {\bf x}_{r+1}^{\tr}A^{-1}{\bf x}_{r+1}&=({\bf e}_1+{\bf e}_2)^{\tr}({\bf e}_1+{\bf e}_2)=0,
    \end{align*}
    as claimed.

\item Let ${\bf j}_n\in \langle \dot{{\bf x}}_1,\ldots, \dot{{\bf x}}_r\rangle$. Then ${\bf j}_n=\sum_{i=1}^r \beta_{i} \dot{{\bf x}}_i$ for some $\beta_i\in\FF_2$. Hence,
    \begin{equation}\label{eq30}
    1=\dot{{\bf x}}_j^{\tr}\dot{{\bf x}}_j=\dot{{\bf x}}_j^{\tr}{\bf j}_n=\sum_{i=1}^r \beta_{i} \dot{{\bf x}}_j^{\tr}\dot{{\bf x}}_i=\sum_{i=1}^r \beta_{i}
    \end{equation}
    for all $j$, and therefore
    $${\bf j}_n^{\tr}{\bf j}_n=\left(\sum_{j=1}^r \beta_{j} \dot{{\bf x}}_j\right)^{\tr}\left(\sum_{i=1}^r \beta_{i} \dot{{\bf x}}_i\right)=\sum_{i,j=1}^r \beta_i\beta_j \dot{{\bf x}}_j^{\tr}\dot{{\bf x}}_i=\left(\sum_{i=1}^r \beta_{i}\right)^2=1,$$
    i.e. $n$ is odd. Since $r$ is even, \eqref{eq30} implies the existence of $s,k\in\{1,\ldots,r\}$ such that $\beta_s=0$ and $\beta_k=1$. Hence, vectors in the set $\{{\bf j}_n,\dot{{\bf x}}_1,\ldots, \dot{{\bf x}}_r\}\backslash\{\dot{{\bf x}}_k\}$ are linearly independent. Denote them by $\ddot{{\bf x}}_1,\ldots, \ddot{{\bf x}}_r$ in some order where $\ddot{\bf x}_1=\dot{\bf x}_s$ and $\ddot{\bf x}_r={\bf j}_n$. Further let ${\bf y}_m=\sum_{j=1}^{2m-1} {\bf e}_j$ for  $m\in\{1,\ldots,r-1\}$ and ${\bf y}_r={\bf j}_n$. Then ${\bf y}_1,\ldots, {\bf y}_r$ are linearly independent and the map $\sigma: \langle {\bf y}_1,\ldots, {\bf y}_r\rangle\to\FF_2^n$,
    $$\sigma\left(\sum_{i=1}^r \alpha_i {\bf y}_i\right)=\sum_{i=1}^r \alpha_i \ddot{{\bf x}}_i\qquad (\alpha_i\in\FF_2),$$
    is well defined. Moreover, $$\sigma({\bf j}_n)={\bf j}_n\in \langle {\bf y}_1,\ldots, {\bf y}_r\rangle\cap \sigma(\langle {\bf y}_1,\ldots, {\bf y}_r\rangle).$$
    Since ${\bf y}_i^{\tr}{\bf y}_j=1=\sigma({\bf y}_i)^{\tr}\sigma({\bf y}_j)$ for all $i,j$, we can apply Lemma~\ref{witt-osnovna} as above to extend $\sigma$ linearly on whole $\FF_2^n$ such that~\eqref{eq25} is true for all ${\bf y}, {\bf z}\in \FF_2^n$. If ${\bf x}_{r+1}:=P^{-1}\sigma({\bf e}_1+{\bf e}_2)$, then
    \begin{align*}
    {\bf x}_s^{\tr}A^{-1}{\bf x}_{r+1}&=\dot{{\bf x}}_s^{\tr}\sigma({\bf e}_1+{\bf e}_2)=\sigma({\bf y}_1)^{\tr}\sigma({\bf e}_1+{\bf e}_2)={\bf y}_1^{\tr}({\bf e}_1+{\bf e}_2)=1,\\
    {\bf x}_t^{\tr}A^{-1}{\bf x}_{r+1}&=\sigma({\bf e}_1+{\bf e}_2+\cdots )^{\tr}\sigma({\bf e}_1+{\bf e}_2)=0 \qquad (t\in\{1,\ldots,r\}\backslash\{s,k\}),\\
    {\bf x}_{r+1}^{\tr}A^{-1}{\bf x}_{r+1}&=\sigma({\bf e}_1+{\bf e}_2)^{\tr}\sigma({\bf e}_1+{\bf e}_2)=0,\\
    {\bf x}_{k}^{\tr}A^{-1}{\bf x}_{r+1}&=\dot{{\bf x}}_k^{\tr}\sigma({\bf e}_1+{\bf e}_2)=\left({\bf j}_n+ \sum_{j\neq k,s} \beta_{j} \dot{{\bf x}}_j\right)^{\tr}\sigma({\bf e}_1+{\bf e}_2)\\
    &=\sigma\left({\bf j}_n+ \sum_{j=2}^{r-1} \beta_{j} {\bf y}_j\right)^{\tr}\sigma({\bf e}_1+{\bf e}_2)=\left({\bf j}_n+ \sum_{j=2}^{r-1} \beta_{j} {\bf y}_j\right)^{\tr}({\bf e}_1+{\bf e}_2)\\
    &=0,
    \end{align*}
    as claimed.\qedhere
\end{myenumerate}
\end{proof}

\begin{lemma}\label{lemma7}
Let ${\bf x}_1,\ldots,{\bf x}_r\in \FF_2^n$ be linearly independent where $4\leq r\leq n$. Further, let $A,A+\sum_{i=1}^r {\bf x}_i^2\in \sgl$ and assume that ${\bf x}_i^{\tr}A^{-1}{\bf x}_i=0$ for some $i\in \{1,\ldots,r\}$. Then there exist ${\bf y}_1,\ldots,{\bf y}_r\in \FF_2^n$ such that
$\sum_{i=1}^r {\bf x}_i^2=\sum_{i=1}^r {\bf y}_i^2$ and both matrices $A+\sum_{i=1}^{r-2} {\bf y}_i^2, A+\sum_{i=1}^{r-1} {\bf y}_i^2$ are in  $\sgl$. \end{lemma}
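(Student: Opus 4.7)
The plan is to reformulate the conclusion via Lemma~\ref{l2} as a condition on the Gram matrix $M:=[{\bf x}_i^{\tr}A^{-1}{\bf x}_j]_{i,j=1}^r$, and then to produce a suitable orthogonal change of basis. By Lemma~\ref{l2}, $A+\sum_{i=1}^k{\bf y}_i^2\in\sgl$ iff $\det\bigl(I_k+[{\bf y}_i^{\tr}A^{-1}{\bf y}_j]_{i,j=1}^k\bigr)=1$, so the two desired memberships translate into requiring the leading principal minors of sizes $r-1$ and $r-2$ of $I_r+M'$ to equal $1$, where $M':=[{\bf y}_i^{\tr}A^{-1}{\bf y}_j]$. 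Lemma~\ref{lema-pomozna} forces $\langle{\bf y}_1,\ldots,{\bf y}_r\rangle=\langle{\bf x}_1,\ldots,{\bf x}_r\rangle$, and writing ${\bf y}_i=\sum_j R_{ij}{\bf x}_j$, the outer-product identity $YY^{\tr}=XX^{\tr}$ combined with the linear independence of the ${\bf x}_i$'s (which gives $X$ a left inverse) yields $R^{\tr}R=I_r$, so that $R$ lies in $\mathcal{O}_r(\FF_2):=\{R\in GL_r(\FF_2):R^{\tr}R=I_r\}$ and $M'=RMR^{\tr}$. Two families of elements of $\mathcal{O}_r(\FF_2)$ will be crucial: permutations, and the ``reflections'' $R=I+e_Se_S^{\tr}$ with $e_S:=\sum_{i\in S}{\bf e}_i$ and $|S|$ even (orthogonality follows from $e_S^{\tr}e_S=|S|\bmod 2=0$). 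These reflections correspond to the substitution ${\bf y}_i={\bf x}_i+\sum_{k\in S}{\bf x}_k$ for $i\in S$ and ${\bf y}_i={\bf x}_i$ otherwise, and the identity $\sum{\bf y}_i^2=\sum{\bf x}_i^2$ is Lemma~\ref{lemma4} specialized to the subfamily indexed by $S$ (with ${\bf v}:=\sum_{i\in S}{\bf x}_i$).

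A crucial preliminary uses Corollary~\ref{c1} contrapositively: the hypothesis ``some ${\bf x}_i^{\tr}A^{-1}{\bf x}_i=0$'' is equivalent to ``some ${\bf x}_i^{\tr}B^{-1}{\bf x}_i=0$'' where $B:=A+\sum_{i=1}^r{\bf x}_i^2$, and by Lemma~\ref{l2} the latter furnishes an index $i_0$ with $A+\sum_{j\ne i_0}{\bf x}_j^2\in\sgl$. Permuting (a permutation is a special $R\in\mathcal{O}_r(\FF_2)$), I would assume $i_0=r$, so that $C:=A+\sum_{j<r}{\bf x}_j^2\in\sgl$. This sets up the \emph{easy sub-case}: if some $j<r$ also satisfies ${\bf x}_j^{\tr}C^{-1}{\bf x}_j=0$, then the ordering ${\bf y}_r:={\bf x}_r$, ${\bf y}_{r-1}:={\bf x}_j$, together with any ordering of the remaining ${\bf x}_k$'s as ${\bf y}_1,\ldots,{\bf y}_{r-2}$, finishes the proof. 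When $r$ is even, so $r-1$ is odd, Lemma~\ref{lemma3} applied to $C$ and the $r-1$ remaining vectors automatically produces such a $j$.

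The \emph{hard sub-case} is $r$ odd together with the pathological configuration where ${\bf x}_j^{\tr}C^{-1}{\bf x}_j=1$ for every $j<r$, so that Lemma~\ref{lemma3} is silent and no pure permutation works. Here I would apply a reflection $R=I+e_Se_S^{\tr}$ for a carefully selected even $S\subseteq\{1,\ldots,r\}$ containing $r$ (so that ${\bf y}_r$ itself is modified), and then check that the new Gram matrix falls into the easy sub-case for the transformed vectors. The main obstacle lies in this explicit choice of $S$: the rank and trace formulas of Lemmas~\ref{l1} and~\ref{l3} (which control how $[{\bf y}_i^{\tr}A^{-1}{\bf y}_j]$ and $[{\bf y}_i^{\tr}C^{-1}{\bf y}_j]$ depend on the subfamily indexed by $S$), combined with the characteristic-$2$ $\FF_2$-linearity of the map ${\bf x}\mapsto{\bf x}^{\tr}C^{-1}{\bf x}$ (since $({\bf u}+{\bf w})^{\tr}C^{-1}({\bf u}+{\bf w})={\bf u}^{\tr}C^{-1}{\bf u}+{\bf w}^{\tr}C^{-1}{\bf w}$ in characteristic $2$), should reduce the existence of a suitable $S$ to a solvable $\FF_2$-linear system, the solvability being underwritten by the hypothesis $r\ge 4$ that guarantees enough degrees of freedom.
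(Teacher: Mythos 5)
Your reduction and your treatment of the tractable cases are sound: translating the two required memberships into determinants of leading principal submatrices of $I_r+[{\bf y}_i^{\tr}A^{-1}{\bf y}_j]$ via Lemma~\ref{l2}; using Corollary~\ref{c1} to find $i_0$ with ${\bf x}_{i_0}^{\tr}B^{-1}{\bf x}_{i_0}=0$, where $B=A+\sum_i{\bf x}_i^2$, so that $C:=B+{\bf x}_{i_0}^2\in\sgl$; and finishing whenever some $j\neq i_0$ has ${\bf x}_j^{\tr}C^{-1}{\bf x}_j=0$ — which Lemma~\ref{lemma3} guarantees when $r$ is even. But the remaining sub-case ($r$ odd and ${\bf x}_j^{\tr}C^{-1}{\bf x}_j=1$ for all $j\neq i_0$) is a genuine gap, not a routine verification. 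By Corollary~\ref{c1} applied to the pair $(A,C)$ it is exactly the configuration where the diagonal of $[{\bf x}_i^{\tr}A^{-1}{\bf x}_j]$ is all ones except at $i_0$, and nothing in the hypotheses rules this out. Your proposed fix — pick an even $S\ni i_0$ and solve ``a solvable $\FF_2$-linear system'' — is unsubstantiated and doubtful as stated: once ${\bf y}_{i_0}$ is modified the reference matrix changes to $B+{\bf y}_{i_0}^2$, so the condition ``some ${\bf y}_j^{\tr}(B+{\bf y}_{i_0}^2)^{-1}{\bf y}_j=0$'' depends on $S$ through a matrix inverse; expanding it with Lemma~\ref{l2} produces terms bilinear in two vectors that each depend affinely on the indicator of $S$, hence quadratic constraints, and you must additionally re-establish ${\bf y}_{i_0}^{\tr}B^{-1}{\bf y}_{i_0}=0$ for the modified vector. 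None of this is carried out, and it is precisely where the content of the lemma lies.

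The paper avoids the intermediate matrix $C$ altogether and works with $B^{-1}$ throughout. It first arranges that $S_0=\{i:{\bf x}_i^{\tr}B^{-1}{\bf x}_i=0\}$ has at least two elements: if $|S_0|=1$ it applies a single explicit reflection on four indices (one from $S_0$, three from $S_1$; this is where $r\geq4$ enters), which turns one zero on the diagonal into three. It then proves that some pair $i_1,i_2$ satisfies either $0={\bf x}_{i_1}^{\tr}B^{-1}{\bf x}_{i_1}={\bf x}_{i_2}^{\tr}B^{-1}{\bf x}_{i_2}={\bf x}_{i_1}^{\tr}B^{-1}{\bf x}_{i_2}$ or $0={\bf x}_{i_1}^{\tr}B^{-1}{\bf x}_{i_1}$, $1={\bf x}_{i_2}^{\tr}B^{-1}{\bf x}_{i_2}={\bf x}_{i_1}^{\tr}B^{-1}{\bf x}_{i_2}$: otherwise every row of $I_r+[{\bf x}_i^{\tr}B^{-1}{\bf x}_j]$ indexed by $S_0$ would equal the indicator vector of $S_0$, forcing determinant $0$ and contradicting $\det A=\det(I_r+[{\bf x}_i^{\tr}B^{-1}{\bf x}_j])=1$. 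Placing that pair in positions $r$ and $r-1$ yields both intermediate invertible matrices simultaneously, with no case split on the parity of $r$. To salvage your route you would need a comparable existence argument in the hard sub-case rather than an appeal to degrees of freedom.
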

\begin{proof}
Let $B=A+\sum_{i=1}^r {\bf x}_i^2$. Define $S_0=\{i\in \{1,\ldots,r\} : {\bf x}_i^{\tr}B^{-1}{\bf x}_i=0\}$ and $S_1=\{i\in \{1,\ldots,r\} : {\bf x}_i^{\tr}B^{-1}{\bf x}_i=1\}$. By Corollary~\ref{c1}, $|S_0|\geq 1$. If $|S_0|=1$ and $S_0=\{j\}$, then there exist $k,l,m\in S_1$ and we can define vectors $\dot{{\bf x}}_1,\ldots, \dot{{\bf x}}_r$ by
$$\dot{{\bf x}}_t:=\left\{\begin{array}{lll}{\bf x}_t&\textrm{if}& t\notin\{j,k,l,m\},\\
{\bf x}_t+{\bf x}_j+{\bf x}_k+{\bf x}_l+{\bf x}_m&\textrm{if}& t\in\{j,k,l,m\}.\\\end{array}\right.$$
Then, \begin{equation*}
1=\dot{{\bf x}}_j^{\tr}B^{-1}\dot{{\bf x}}_j,\
0=\dot{{\bf x}}_k^{\tr}B^{-1}\dot{{\bf x}}_k=\dot{{\bf x}}_l^{\tr}B^{-1}\dot{{\bf x}}_l=\dot{{\bf x}}_m^{\tr}B^{-1}\dot{{\bf x}}_m
\end{equation*}
so the set $\dot{S}_0=\{i\in \{1,\ldots,r\} : \dot{{\bf x}}_i^{\tr}B^{-1}\dot{{\bf x}}_i=0\}$ has three elements and $\sum_{i=1}^r {\bf x}_i^2=\sum_{i=1}^r \dot{{\bf x}}_i^2$. Since we can replace vectors ${\bf x}_1,\ldots,{\bf x}_r$ by $\dot{{\bf x}}_1,\ldots, \dot{{\bf x}}_r$, we assume in the rest of the proof that $|S_0|\geq 2$.

Next, we claim that there exist $i_1,i_2\in \{1,\ldots,r\}$ such that
\begin{align}
\nonumber 0={\bf x}_{i_1}^{\tr}B^{-1}{\bf x}_{i_1}&={\bf x}_{i_2}^{\tr}B^{-1}{\bf x}_{i_2}={\bf x}_{i_1}^{\tr}B^{-1}{\bf x}_{i_2}\\
\label{eq13}&\textrm{or}\\
\nonumber0={\bf x}_{i_1}^{\tr}B^{-1}{\bf x}_{i_1}&,\  1={\bf x}_{i_2}^{\tr}B^{-1}{\bf x}_{i_2}={\bf x}_{i_1}^{\tr}B^{-1}{\bf x}_{i_2}.
\end{align}
Suppose \eqref{eq13} is not true. Then, for each $s\in S_0$ the row $s$ of the matrix $I_r+ [{\bf x}_{j_1}^{\tr}B^{-1}{\bf x}_{j_2}]_{j_1,j_2=1}^r$ equals $(a_1,\ldots,a_r)$ where $a_{j_2}=1$ if and only if $j_2\in S_0$. Since $|S_0|\geq 2$, it follows that
$$\det\big(I_r+ [{\bf x}_{j_1}^{\tr}B^{-1}{\bf x}_{j_2}]_{j_1,j_2=1}^r\big)=0.$$ On the other hand,
$$1=\det A=\det\left(B+\sum_{i=1}^r {\bf x}_i^2\right)=\det\big(I_r+ [{\bf x}_{j_1}^{\tr}B^{-1}{\bf x}_{j_2}]_{j_1,j_2=1}^r\big),$$
a contradiction.

Let $\{{\bf y}_1,\ldots,{\bf y}_r\}=\{{\bf x}_1,\ldots,{\bf x}_r\}$ where ${\bf y}_{r-1}={\bf x}_{i_2}$ and ${\bf y}_{r}={\bf x}_{i_1}$. Then,
\begin{align*}
\det\left(A+\sum_{i=1}^{r-1} {\bf y}_i^2\right)&=\det(B+{\bf y}_r^2)=1+ {\bf y}_r^{\tr}B^{-1}{\bf y}_r=1\\
\det\left(A+\sum_{i=1}^{r-2} {\bf y}_i^2\right)&=\det(B+{\bf y}_{r-1}^2+{\bf y}_r^2)\\
&=\det\left(I_2+ \left(\begin{array}{cc}{\bf y}_{r-1}^{\tr}B^{-1}{\bf y}_{r-1}&{\bf y}_{r-1}^{\tr}B^{-1}{\bf y}_r\\
{\bf y}_r^{\tr}B^{-1}{\bf y}_{r-1}&{\bf y}_r^{\tr}B^{-1}{\bf y}_r\end{array}\right)\right)=1,
\end{align*}
as claimed.
\end{proof}

\begin{lemma}\label{lemma5}
Let $A\in\sgl$, let vectors ${\bf x}_1,\ldots,{\bf x}_r\in \FF_2^n$ be linearly independent, and assume that ${\bf x}_i^{\tr}A^{-1}{\bf x}_i=1$ for all $i$. If $\rank({\bf x}_1^2+\cdots + {\bf x}_r^2+ {\bf y}^2)<r$ for some ${\bf y}\in \FF_2^n$, then ${\bf y}^{\tr}A^{-1}{\bf y}=1$.
\end{lemma}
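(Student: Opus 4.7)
My approach proceeds in three steps: first localizing ${\bf y}$ to the column span of $X:=[{\bf x}_1|\cdots|{\bf x}_r]$, then pinning down the parity of the expansion of ${\bf y}$, and finally computing ${\bf y}^{\tr}A^{-1}{\bf y}$ explicitly. Note that $A^{-1}$ plays essentially no role in the first two steps, since the rank hypothesis involves only ${\bf x}_1,\ldots,{\bf x}_r,{\bf y}$.

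Set $Y:=[X|{\bf y}]\in M_{n\times(r+1)}(\FF_2)$, so that $\sum_{i=1}^r{\bf x}_i^2+{\bf y}^2=YY^{\tr}$. If ${\bf y}\notin\langle{\bf x}_1,\ldots,{\bf x}_r\rangle$, the columns of $Y$ are linearly independent, which makes $Y^{\tr}:\FF_2^n\to\FF_2^{r+1}$ surjective; hence
$$
\mathrm{Im}(YY^{\tr})=Y\bigl(\mathrm{Im}(Y^{\tr})\bigr)=Y(\FF_2^{r+1})=\mathrm{Im}(Y)
$$
has dimension $r+1$, contradicting $\rank(YY^{\tr})<r$. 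Thus ${\bf y}=\sum_{i\in S}{\bf x}_i$ for a uniquely determined subset $S\subseteq\{1,\ldots,r\}$; the case $S=\emptyset$ is also excluded because then $YY^{\tr}=XX^{\tr}$ has rank $r$ by the same image argument applied to $X$.

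The crux is to show $|S|$ is odd. Let ${\bf z}_0\in\FF_2^{r+1}$ be the vector whose first $r$ entries form the indicator of $S$ and whose last entry is $1$; then $\ker Y=\langle{\bf z}_0\rangle$ because the ${\bf x}_i$ are linearly independent. The identity $(\ker Y)^{\perp}=\mathrm{Im}(Y^{\tr})$ in $\FF_2^{r+1}$ (a standard consequence of the non-degeneracy of the standard bilinear form on $\FF_2^{r+1}$), combined with the rank-nullity theorem applied to the restriction of $Y$ to $\mathrm{Im}(Y^{\tr})$, gives
$$
\rank(YY^{\tr})=\rank(Y)-\dim\bigl(\ker Y\cap\mathrm{Im}(Y^{\tr})\bigr)=r-\dim\bigl(\langle{\bf z}_0\rangle\cap\langle{\bf z}_0\rangle^{\perp}\bigr).
$$
Here $\langle{\bf z}_0\rangle\cap\langle{\bf z}_0\rangle^{\perp}$ has dimension $1$ exactly when ${\bf z}_0^{\tr}{\bf z}_0=0$, and ${\bf z}_0^{\tr}{\bf z}_0=|S|+1\pmod{2}$. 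So $\rank(YY^{\tr})$ equals $r-1$ when $|S|$ is odd and $r$ otherwise, and the hypothesis forces $|S|$ odd.

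Finally, using the symmetry of $A^{-1}$ so that the off-diagonal cross terms pair up and cancel over $\FF_2$,
$$
{\bf y}^{\tr}A^{-1}{\bf y}=\sum_{i,j\in S}{\bf x}_i^{\tr}A^{-1}{\bf x}_j=\sum_{i\in S}{\bf x}_i^{\tr}A^{-1}{\bf x}_i=|S|=1,
$$
as required. The only delicate point is the rank identity in the third paragraph; once the description $\ker Y=\langle{\bf z}_0\rangle$ is in place, however, it reduces via the orthogonality relation on $\FF_2^{r+1}$ to a transparent parity check on ${\bf z}_0^{\tr}{\bf z}_0$.
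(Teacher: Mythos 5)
Your proof is correct. It follows the same three-step skeleton as the paper's argument -- first force ${\bf y}\in\langle {\bf x}_1,\ldots,{\bf x}_r\rangle$, then show the support $S$ of the expansion has odd size, then evaluate ${\bf y}^{\tr}A^{-1}{\bf y}$ using the cancellation of cross terms in characteristic two -- but the two middle ingredients are obtained by different means. The paper changes basis so that ${\bf x}_i\mapsto {\bf e}_i$, invokes an external result (\cite[Lemma~3.1]{LAA2006}) to place the transformed ${\bf y}$ in $\langle {\bf e}_1,\ldots,{\bf e}_r\rangle$, and then reads off the parity from the determinant identity $\det(I_r+{\bf z}{\bf z}^{\tr})=1+{\bf z}^{\tr}{\bf z}$ of Lemma~\ref{l2}. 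You instead argue both points from scratch: membership in the span via $\mathrm{Im}(YY^{\tr})=Y(\mathrm{Im}(Y^{\tr}))=\mathrm{Im}(Y)$ when $Y$ has full column rank (the same device the paper itself uses in Lemma~\ref{lema-pomozna}), and the parity via $\ker Y=\langle {\bf z}_0\rangle$, the identity $\mathrm{Im}(Y^{\tr})=(\ker Y)^{\perp}$, and the observation that $\rank(YY^{\tr})$ drops below $r$ precisely when ${\bf z}_0$ is isotropic, i.e.\ when $|S|$ is odd. Your route is self-contained and conceptually transparent about \emph{why} the rank drops (isotropy of the kernel vector), at the cost of being longer; the paper's route is shorter because it leans on Lemma~\ref{l2} and a cited lemma already in its toolkit. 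All the auxiliary claims you use (surjectivity of $Y^{\tr}$, the double-perp identity over the nondegenerate standard form on $\FF_2^{r+1}$, the rank--nullity computation for the restriction of $Y$ to $\mathrm{Im}(Y^{\tr})$) are valid, so there is no gap.
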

\begin{proof}
Pick any $P\in GL_n(\FF_2)$ that has ${\bf x}_i$ as the $i$-th column for $i=1,\ldots,r$. Let ${\bf z}=P^{-1} {\bf y}$. Then ${\bf x}_1^2+\cdots + {\bf x}_r^2+ {\bf y}^2=P({\bf e}_1^2+\cdots + {\bf e}_r^2+ {\bf z}^2)P^{\tr}$, so
\begin{equation}\label{eq14}
\rank({\bf e}_1^2+\cdots + {\bf e}_r^2+ {\bf z}^2)<r.
\end{equation}
By \cite[Lemma~3.1]{LAA2006}, ${\bf z}=\sum_{i=1}^r z_i {\bf e}_i$ for some $z_i\in\FF_2$. By \eqref{eq14} and Lemma~\ref{l2},
$$0=\det(I_r+(z_1,\ldots,z_r)^{\tr} (z_1,\ldots,z_r))=1+(z_1,\ldots,z_r)(z_1,\ldots,z_r)^{\tr}=1+\sum_{i=1}^{r} z_i^2.$$
Consequently, in characteristic two we deduce that
$${\bf y}^{\tr}A^{-1}{\bf y}=\left(\sum_{i=1}^r z_i {\bf e}_i\right)^{\tr}P^{\tr}A^{-1}P\left(\sum_{i=1}^r z_i {\bf e}_i\right)=\sum_{i=1}^r z_i^2\cdot {\bf x}_i^{\tr}A^{-1}{\bf x}_i=1.\qedhere$$
\end{proof}

\begin{lemma}\label{lemma66}
Let $A, A+\sum_{i=1}^{r} {\bf x}_i^2\in\sgl$ where ${\bf x}_1,\ldots,{\bf x}_r\in \FF_2^n$ are linearly independent and ${\bf x}_i^{\tr}A^{-1}{\bf x}_i=1$ for all $i$. Then, $d(A,A+\sum_{i=1}^{r} {\bf x}_i^2)\geq r+1$.
\end{lemma}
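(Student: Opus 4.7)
The plan is a proof by contradiction. Since $\rank(B-A)=r$ where $B:=A+\sum_{i=1}^{r}{\bf x}_i^2$, inequality~\eqref{eq3} already yields $d(A,B)\geq r$, so it suffices to rule out the possibility $d(A,B)=r$. If that were the case, there would exist a path $A=C_0\sim C_1\sim\cdots\sim C_r=B$ with $C_j-C_{j-1}={\bf y}_j^2$ for some ${\bf y}_j\in\FF_2^n$ and every $C_j\in\sgl$. The aim will be to derive a contradiction already from the invertibility of $C_1$.

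The decisive observation is that the identity $\sum_{i=1}^{r}{\bf y}_i^2=\sum_{i=1}^{r}{\bf x}_i^2$ is very restrictive. Since the right-hand side has rank $r$, the ${\bf y}_i$'s must be linearly independent, and Lemma~\ref{lema-pomozna} then forces $\langle{\bf y}_1,\ldots,{\bf y}_r\rangle=\langle{\bf x}_1,\ldots,{\bf x}_r\rangle$. Setting $X=[{\bf x}_1\mid\cdots\mid{\bf x}_r]$ and $Y=[{\bf y}_1\mid\cdots\mid{\bf y}_r]$, I can write $Y=XR$ for some $R\in GL_r(\FF_2)$, and then $YY^{\tr}=XX^{\tr}$ rewrites as $X(RR^{\tr}+I_r)X^{\tr}=O$. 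Since $X$ has linearly independent columns it admits a left inverse, which forces $RR^{\tr}=I_r$ and hence $R^{\tr}R=I_r$. In particular, the first column ${\bf r}_1$ of $R$ satisfies ${\bf r}_1^{\tr}{\bf r}_1=1$.

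To conclude, I bring in the hypothesis on the ${\bf x}_i$'s: the matrix $N:=X^{\tr}A^{-1}X$ has all diagonal entries equal to $1$, so $T:=N+I_r$ is a symmetric matrix with zero diagonal, i.e.\ alternate, and therefore ${\bf v}^{\tr}T{\bf v}=0$ for every ${\bf v}\in\FF_2^r$. Using ${\bf y}_1=X{\bf r}_1$,
\[
{\bf y}_1^{\tr}A^{-1}{\bf y}_1={\bf r}_1^{\tr}N{\bf r}_1={\bf r}_1^{\tr}{\bf r}_1+{\bf r}_1^{\tr}T{\bf r}_1=1+0=1.
\]
But Lemma~\ref{l2} gives $\det C_1=\det A\cdot(1+{\bf y}_1^{\tr}A^{-1}{\bf y}_1)=1\cdot(1+1)=0$, contradicting $C_1\in\sgl$.

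The main obstacle, I expect, is spotting the middle step: translating ``the hypothetical path has minimal length $r$'' into the purely algebraic condition that the change-of-basis matrix $R$ is orthogonal over $\FF_2$. Once this observation is in place, the alternate matrix $T$ absorbs all off-diagonal information of $N$, the orthonormality of the columns of $R$ contributes exactly one copy of~$1$ through ${\bf r}_1^{\tr}{\bf r}_1$, and invertibility of $C_1$ fails for parity reasons at the very first edge of the supposed short path. No case analysis on $r$, on alternate versus nonalternate $B-A$, or on the remaining ${\bf y}_i$'s is required.
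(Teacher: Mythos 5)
Your proof is correct, and its skeleton coincides with the paper's: assume a path $A=C_0\sim C_1\sim\cdots\sim C_r=B$ of length $r$ exists, show that its first step ${\bf y}_1$ must satisfy ${\bf y}_1^{\tr}A^{-1}{\bf y}_1=1$, and contradict $\det(A+{\bf y}_1^2)=\det A\cdot(1+{\bf y}_1^{\tr}A^{-1}{\bf y}_1)$ via Lemma~\ref{l2}. The only divergence is in how that middle fact is obtained. The paper extracts from the telescoping identity just the rank drop $\rank\big(\sum_{i=1}^r{\bf x}_i^2+{\bf y}_1^2\big)\leq r-1$ and invokes Lemma~\ref{lemma5}, which shows that any ${\bf y}$ causing such a drop lies in $\langle{\bf x}_1,\ldots,{\bf x}_r\rangle$ with an odd-weight coefficient vector and hence satisfies ${\bf y}^{\tr}A^{-1}{\bf y}=1$. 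You instead use the full identity $\sum_{i=1}^r{\bf y}_i^2=\sum_{i=1}^r{\bf x}_i^2$ together with Lemma~\ref{lema-pomozna} to conclude that the change-of-basis matrix $R$ with $Y=XR$ satisfies $R^{\tr}R=I_r$, so that ${\bf r}_1^{\tr}{\bf r}_1=1$ plays the role of the odd-weight condition, and the alternate matrix $T=X^{\tr}A^{-1}X+I_r$ absorbs the off-diagonal terms exactly as in the paper's computation. Both arguments are complete; yours is self-contained and bypasses Lemma~\ref{lemma5} (at the cost of using the full factorization identity rather than only the rank drop, which is all the paper's lemma requires), while the paper's route isolates the rank-drop statement as a reusable auxiliary result.
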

\begin{proof}
By \eqref{eq3}, $d(A,A+\sum_{i=1}^{r} {\bf x}_i^2)\geq r$. Suppose that  $d(A,A+\sum_{i=1}^{r} {\bf x}_i^2)=r$. Then there exist nonzero vectors ${\bf y}_1,\ldots, {\bf y}_r$ such that $A+\sum_{i=1}^{s} {\bf y}_i^2\in\sgl$ for all $s\in\{1,\ldots,r\}$ and $\sum_{i=1}^{r} {\bf y}_i^2=\sum_{i=1}^{r} {\bf x}_i^2$. In particular, $\rank(\sum_{i=1}^{r} {\bf x}_i^2+ {\bf y}_1^2)=\rank(\sum_{i=2}^{r} {\bf y}_i^2)\leq r-1$. By Lemma~\ref{lemma5}, ${\bf y}_1^{\tr}A^{-1}{\bf y}_1=1$. On the other hand, $1=\det(A+{\bf y}_1^2)=1+{\bf y}_1^{\tr}A^{-1}{\bf y}_1=0$, a contradiction.
\end{proof}

\begin{lemma}\label{lemma6}
Let $A, A+\sum_{i=1}^{r} {\bf x}_i^2\in\sgl$ where ${\bf x}_1,\ldots,{\bf x}_r\in \FF_2^n$ are linearly independent, $2\leq r\leq n$, and $[{\bf x}_i^{\tr}A^{-1}{\bf x}_j]_{i,j=1}^r\in \ronetzero$. Then, $d(A,A+\sum_{i=1}^{r} {\bf x}_i^2)\geq r+2$.
\end{lemma}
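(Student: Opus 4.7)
The plan is to argue by contradiction. Suppose $d(A,B) \leq r+1$ where $B = A + XX^{\tr}$ with $X=({\bf x}_1, \ldots, {\bf x}_r)$. Any such walk provides vectors ${\bf y}_1, \ldots, {\bf y}_s$ with $s \in \{r, r+1\}$ (since $\rank(B-A) = r$) satisfying $\sum_{i=1}^s {\bf y}_i^2 = XX^{\tr}$ and $A+\sum_{i=1}^j {\bf y}_i^2 \in \sgl$ for each $j \in \{1,\ldots,s-1\}$.

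First I would show that $\langle {\bf y}_1, \ldots, {\bf y}_s\rangle = \langle {\bf x}_1, \ldots, {\bf x}_r\rangle$. If $s=r$, this is Lemma~\ref{lema-pomozna}. If $s=r+1$ and the ${\bf y}_i$'s were linearly independent, then $\ker(YY^{\tr}) = \ker(Y^{\tr})$ would force $\rank(YY^{\tr}) = r+1 \neq r$; hence $\dim\langle {\bf y}_i\rangle = r$, and both spans equal the column space of $YY^{\tr} = XX^{\tr}$. Thus $Y = X\tilde R$ for some $r \times s$ matrix $\tilde R$ of rank $r$, and applying a left inverse of $X$ to $XX^{\tr} = X\tilde R\tilde R^{\tr}X^{\tr}$ yields the key identity $\tilde R\tilde R^{\tr} = I_r$.

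Next, since $M := X^{\tr}A^{-1}X \in \ronetzero$ has rank one and zero trace, Proposition~\ref{prop0} lets me write $M = {\bf v}{\bf v}^{\tr}$ with ${\bf v}^{\tr}{\bf v} = 0$. Setting ${\bf u} := \tilde R^{\tr}{\bf v} \in \FF_2^s$, I get $Y^{\tr}A^{-1}Y = {\bf u}{\bf u}^{\tr}$, and the identity $\tilde R\tilde R^{\tr} = I_r$ gives $\sum_{i=1}^s u_i = {\bf u}^{\tr}{\bf u} = {\bf v}^{\tr}\tilde R\tilde R^{\tr}{\bf v} = 0$. Invertibility of each intermediate matrix combined with Lemma~\ref{l2} then yields
$$1 = \det\left(A + \sum_{i=1}^j {\bf y}_i^2\right) = \det(I_j + {\bf u}_{1:j}{\bf u}_{1:j}^{\tr}) = 1 + \sum_{i=1}^j u_i$$
for every $j = 1, \ldots, s-1$, where ${\bf u}_{1:j}$ denotes the vector of the first $j$ coordinates of ${\bf u}$. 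Inducting on $j$ together with the trace equation $\sum_{i=1}^s u_i = 0$ forces ${\bf u} = 0$. Since $\tilde R$ has rank $r$, $\tilde R^{\tr}$ is injective, so ${\bf v} = 0$, contradicting $\rank M = 1$.

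The main step, in my view, is the identification of the identity $\tilde R \tilde R^{\tr} = I_r$ and the clean reparametrization $Y^{\tr}A^{-1}Y = {\bf u}{\bf u}^{\tr}$ in terms of a single vector ${\bf u}$; once these are in place, the telescoping sequence of determinantal conditions coming from successive vertices along the path collapses neatly and uniformly to ${\bf u} = 0$, regardless of whether $s=r$ or $s=r+1$.
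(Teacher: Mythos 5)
Your proof is correct and follows essentially the same route as the paper's: both reduce to the span equality $\langle{\bf y}_1,\ldots,{\bf y}_s\rangle=\langle{\bf x}_1,\ldots,{\bf x}_r\rangle$, exploit the rank-one, zero-trace structure of $X^{\tr}A^{-1}X$, and run the same induction on the determinant conditions $\det\bigl(A+\sum_{i\le j}{\bf y}_i^2\bigr)=1$ along the putative short path to force everything to vanish. Your reparametrization $Y^{\tr}A^{-1}Y={\bf u}{\bf u}^{\tr}$ with ${\bf u}=\tilde R^{\tr}{\bf v}$ and the identity $\tilde R\tilde R^{\tr}=I_r$ is a cleaner, uniform packaging of the paper's coefficient computation (where $\sum_{j\le k}\alpha_j^{(s)}$ plays the role of $u_s$), but the underlying mechanism is identical.
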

\begin{proof}
By Proposition~\ref{prop0}, we can permute vectors ${\bf x}_1,\ldots,{\bf x}_r$  to achieve that
\begin{equation}\label{eq16}
[{\bf x}_i^{\tr}A^{-1}{\bf x}_j]_{i,j=1}^r=\left(\begin{array}{cc}J_{k\times k}& O\\O&O\end{array}\right)
\end{equation}
for some even $k\in \{2,\ldots,r\}$.

Suppose that $d=d(A,A+\sum_{i=1}^{r} {\bf x}_i^2)$ where $d\leq r+1$.  Then there exist nonzero ${\bf y}_1,\ldots,{\bf y}_d\in\FF_2^n$ such that $A+\sum_{i=1}^{s} {\bf y}_i^2\in\sgl$ for all $s\in\{1,\ldots,d\}$ and $\sum_{i=1}^{d} {\bf y}_i^2=\sum_{i=1}^{r} {\bf x}_i^2$. If there exists $i_0$ such that ${\bf y}_{i_0}\notin \langle {\bf x}_1,\ldots,{\bf x}_r\rangle$, then
$$r+1=\rank({\bf x}_1^2+\cdots +{\bf x}_r^2+{\bf y}_{i_0}^2)=\rank\left(\sum_{i\neq i_0} {\bf y}_i^2\right)\leq d-1\leq r,$$
a contradiction. Hence, $\langle {\bf y}_1,\ldots,{\bf y}_d\rangle\subseteq \langle {\bf x}_1,\ldots,{\bf x}_r\rangle$. On the other hand,
$$\dim \langle {\bf y}_1,\ldots,{\bf y}_d\rangle\geq \rank({\bf y}_1^2+\cdots +{\bf y}_d^2)=\rank({\bf x}_1^2+\cdots +{\bf x}_r^2)=r.$$
Consequently,  $d\in \{r,r+1\}$ and $\langle {\bf y}_1,\ldots,{\bf y}_d\rangle=\langle {\bf x}_1,\ldots,{\bf x}_r\rangle$. In particular,
$${\bf y}_i=\sum_{j=1}^r \alpha_{j}^{(i)}{\bf x}_j\qquad (i=1,\ldots,d)$$
for some $\alpha_{j}^{(i)}\in\FF_2$.

We next claim that whenever ${\bf y}_s^{\tr} A^{-1} {\bf y}_s=0$ for some $s$, we have also ${\bf y}_s^{\tr} A^{-1} {\bf y}_t=0$ for all $t$. In fact,  \begin{align*}
0={\bf y}_s^{\tr} A^{-1} {\bf y}_s&=\left(\sum_{j=1}^r \alpha_{j}^{(s)}{\bf x}_j\right)^{\tr}A^{-1}\left(\sum_{j=1}^r \alpha_{j}^{(s)}{\bf x}_j\right)\\
&=\sum_{j=1}^r (\alpha_{j}^{(s)})^2 {\bf x}_j^{\tr} A^{-1} {\bf x}_j=\sum_{j=1}^k \alpha_{j}^{(s)}
\end{align*}
and consequently
\begin{align*}
{\bf y}_s^{\tr} A^{-1} {\bf y}_t=&\left(\sum_{j=1}^r \alpha_{j}^{(s)}{\bf x}_j\right)^{\tr}A^{-1}\left(\sum_{i=1}^r \alpha_{i}^{(t)}{\bf x}_i\right)\\
=&\sum_{j=1}^r \sum_{i=1}^r \alpha_{j}^{(s)}\alpha_{i}^{(s)} {\bf x}_j^{\tr} A^{-1} {\bf x}_i\\
=&\left(\sum_{j=1}^k  \alpha_{j}^{(s)}\right)\cdot \left(\sum_{i=1}^k  \alpha_{i}^{(t)}\right)=0.
\end{align*}

Next, we use the induction to prove that ${\bf y}_s^{\tr} A^{-1} {\bf y}_s=0$ for all $s$, and therefore
\begin{equation}\label{eq15}
{\bf y}_s^{\tr} A^{-1} {\bf y}_t=0\ \textrm{for all}\ s,t.
\end{equation}
Since $1=\det(A+{\bf y}_1^2)=1+{\bf y}_1^{\tr} A^{-1} {\bf y}_1$, we deduce that ${\bf y}_1^{\tr} A^{-1} {\bf y}_1=0$. To prove the inductive step assume that $0={\bf y}_1^{\tr} A^{-1} {\bf y}_1=\cdots={\bf y}_{s-1}^{\tr} A^{-1} {\bf y}_{s-1}$. Then,
\begin{align*}
1&=\det\left(A+\sum_{i=1}^{s} {\bf y}_i^2\right)\\
&=\det\left(I_s+[{\bf y}_i^{\tr} A^{-1} {\bf y}_j]_{i,j=1}^s\right)\\
&=\det\left(\begin{array}{cc}I_{s-1}&O_{(s-1)\times 1}\\
O_{1\times (s-1)}& 1+{\bf y}_s^{\tr} A^{-1} {\bf y}_s\end{array}\right)=1+{\bf y}_s^{\tr} A^{-1} {\bf y}_s
\end{align*}
by Lemma~\ref{l2}. Hence, ${\bf y}_s^{\tr} A^{-1} {\bf y}_s=0$. This completes the proof of~\eqref{eq15}. Since $\langle {\bf y}_1,\ldots,{\bf y}_d\rangle=\langle {\bf x}_1,\ldots,{\bf x}_r\rangle$, both vectors ${\bf x}_1, {\bf x}_2$ are linear combinations of ${\bf y}_1,\ldots,{\bf y}_d$. Therefore, \eqref{eq15} implies that ${\bf x}_1^{\tr} A^{-1} {\bf x}_2=0$, which contradicts~\eqref{eq16}. Consequently, $d(A,A+\sum_{i=1}^{r} {\bf x}_i^2)\geq r+2$.
\end{proof}

\begin{lemma}\label{lemma1}
Suppose $1\leq r\leq n$, $A\in \sgl$, and ${\bf x}_1,\ldots,{\bf x}_r\in \FF_2^{n}$ are linearly independent. If ${\bf x}_i^{\tr} A^{-1} {\bf x}_j=1$ for all $i,j$, and $\sum_{i=1}^r {\bf x}_i^2=\sum_{i=1}^r {\bf y}_i^2$ for some vectors ${\bf y}_1,\ldots,{\bf y}_r\in \FF_2^{n}$, then ${\bf y}_i^{\tr} A^{-1} {\bf y}_j=1$ for all $i,j$.
\end{lemma}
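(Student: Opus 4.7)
The plan is to reduce the claim to elementary linear algebra using Lemma~\ref{lema-pomozna}. Assemble the vectors into matrices $X,Y\in M_{n\times r}(\FF_2)$ with $i$-th column ${\bf x}_i$ and ${\bf y}_i$, respectively. The hypothesis reads $X^{\tr}A^{-1}X=J_{r\times r}={\bf j}_r{\bf j}_r^{\tr}$ and the goal reads $Y^{\tr}A^{-1}Y=J_{r\times r}$. The equality $\sum {\bf x}_i^2=\sum {\bf y}_i^2$ means $XX^{\tr}=YY^{\tr}$.

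First I would apply Lemma~\ref{lema-pomozna} (the first case) to conclude $\langle {\bf y}_1,\ldots,{\bf y}_r\rangle=\langle {\bf x}_1,\ldots,{\bf x}_r\rangle$. In particular the ${\bf y}_i$ are linearly independent, so there is a (necessarily invertible) matrix $R\in GL_r(\FF_2)$ with $Y=XR$. Substituting into $XX^{\tr}=YY^{\tr}$ gives $X(I_r+RR^{\tr})X^{\tr}=O$. Since the columns of $X$ are linearly independent, $X$ admits a left inverse $L\in M_{r\times n}(\FF_2)$ with $LX=I_r$; multiplying on the left by $L$ and on the right by $L^{\tr}$ yields $I_r+RR^{\tr}=O$, i.e.\ $RR^{\tr}=I_r$.

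Next, since $R$ is square, $RR^{\tr}=I_r$ forces $R^{\tr}R=I_r$ as well. In characteristic two this has a convenient consequence: the $(i,i)$-entry of $R^{\tr}R$ is $\sum_k R_{ki}^2=\sum_k R_{ki}$, so each column of $R$ has entry-sum $1$. Equivalently, $R^{\tr}{\bf j}_r={\bf j}_r$. Then
\begin{equation*}
Y^{\tr}A^{-1}Y=R^{\tr}(X^{\tr}A^{-1}X)R=R^{\tr}{\bf j}_r{\bf j}_r^{\tr}R=(R^{\tr}{\bf j}_r)(R^{\tr}{\bf j}_r)^{\tr}={\bf j}_r{\bf j}_r^{\tr}=J_{r\times r},
\end{equation*}
which is precisely the desired ${\bf y}_i^{\tr}A^{-1}{\bf y}_j=1$ for all $i,j$.

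There is no real obstacle beyond lining up the objects correctly; the only points that require a little care are invoking Lemma~\ref{lema-pomozna} to upgrade ``spans the same space'' to ``$R$ is invertible'', and using a left inverse of $X$ (which exists because of linear independence of the ${\bf x}_i$) to cancel $X$ and $X^{\tr}$ from $X(I_r+RR^{\tr})X^{\tr}=O$.
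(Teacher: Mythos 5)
Your proposal is correct and follows essentially the same route as the paper: both invoke Lemma~\ref{lema-pomozna} to get span equality and hence $Y=XR$ with $R$ invertible, cancel $X$ from $X(I_r+RR^{\tr})X^{\tr}=O$ using full column rank to obtain $RR^{\tr}=I_r$, and then use the characteristic-two identity that each diagonal entry of $R^{\tr}R$ equals the corresponding column sum of $R$ to conclude $R^{\tr}{\bf j}_r={\bf j}_r$ and hence $Y^{\tr}A^{-1}Y=J_{r\times r}$. No gaps.
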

\begin{proof}
By Lemma~\ref{lema-pomozna}, $\langle {\bf y}_1, \ldots , {\bf y}_r \rangle=\langle {\bf x}_1, \ldots , {\bf x}_r \rangle$. Hence, for each $i$ there exist constants $\alpha_k^{(i)}\in \FF_2$ s.t. ${\bf y}_i= \sum_{j=1}^r \alpha_k^{(i)}{\bf x}_k$. Let $X, Y\in M_{n\times r}(\FF_2)$ and $P\in M_{r\times r}(\FF_2)$ be the matrices with the $i$-th column equal to ${\bf x}_i$, ${\bf y}_i$ and ${\bf p}_i=(\alpha_1^i, \dots, \alpha_r^i)^{\tr}$, respectively. Obviously, $Y=XP$. Therefore,
$$XX^{\tr}= {\bf x}_1^2+\cdots+ {\bf x}_r^2={\bf y}_1^2+\cdots+ {\bf y}_r^2=YY^{\tr}=XPP^{\tr}X^{\tr},$$
i.e. $X(I_r-PP^{\tr})X^{\tr}=0$. Since $\rank X=r$, we deduce that $P^{\tr}=P^{-1}$. Hence,
$$[{\bf y}_i^{\tr} A^{-1} {\bf y}_j]_{i,j=1}^{r}=Y^{\tr}A^{-1}Y=P^{\tr} X^{\tr} A^{-1} XP=P^{\tr}JP=: {\bf z}^2$$ where ${\bf z}=(z_1,\ldots,z_r)^{\tr}=P^{\tr}{\bf j}_r$. Since $I_r=P^{\tr}P=[{\bf p}_i^{\tr}{\bf p}_j]_{i,j=1}^r$, it follows that ${\bf p}_i^{\tr}{\bf p}_i=1$ for all $i$. Since the underlying field is $\FF_2$, we deduce that $z_i={\bf p}_i^{\tr} {\bf j}_r={\bf p}_i^{\tr}{\bf p}_i=1$ for all $i$. Hence, $[{\bf y}_i^{\tr} A^{-1} {\bf y}_j]_{i,j=1}^{r}=J$ as claimed.
\end{proof}

\begin{lemma}\label{lemma2}
Let $A\in \sgl$ and let ${\bf x}_1,\ldots,{\bf x}_r\in \FF_2^{n}$ be linearly independent where $2\leq r\leq n$, $[{\bf x}_i^{\tr}A^{-1}{\bf x}_j]_{i,j=1}^r\in \ronetzero$, and ${\bf x}_i^{\tr} A^{-1} {\bf x}_j=0$ for some $i,j$.
\begin{enumerate}
\item\label{lemma2i} Let $k\in \{2,\ldots,r-1\}$ be even. Then, there exist linearly independent vectors ${\bf y}_1,\ldots,{\bf y}_r\in \FF_2^{n}$ and a permutation matrix $Q\in GL_r(\FF_2)$ such that
    \begin{equation}\label{eq17}
    \sum_{i=1}^r {\bf x}_i^2=\sum_{i=1}^r {\bf y}_i^2,\quad  \sum_{i=1}^r {\bf x}_i=\sum_{i=1}^r {\bf y}_i,
    \end{equation}
    and
    $$[{\bf y}_i^{\tr}A^{-1}{\bf y}_j]_{i,j=1}^r=Q\left(\begin{array}{cc}J_{k\times k} & O\\
    O&O\end{array}\right)Q^{\tr}.$$
\item\label{lemma2ii} If $\sum_{i=1}^r {\bf x}_i^2=\sum_{i=1}^r {\bf y}_i^2$ for some ${\bf y}_1,\ldots,{\bf y}_r\in \FF_2^{n}$, then $[{\bf y}_i^{\tr}A^{-1}{\bf y}_j]_{i,j=1}^r\in \ronetzero$ and ${\bf y}_{i'}^{\tr} A^{-1} {\bf y}_{j'}=0$ for some~$i',j'$.
\end{enumerate}
\end{lemma}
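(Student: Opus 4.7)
The plan is to unify both parts via the following setup. Since ${\bf x}_1,\dots,{\bf x}_r\in\FF_2^n$ are linearly independent, any vectors ${\bf y}_1,\dots,{\bf y}_r\in\FF_2^n$ with $\sum_{i=1}^r{\bf y}_i^2=\sum_{i=1}^r{\bf x}_i^2$ span the same subspace by Lemma~\ref{lema-pomozna}, so $Y=XR$ for some $R\in GL_r(\FF_2)$, where $X,Y$ are the $n\times r$ matrices with columns ${\bf x}_i$ and ${\bf y}_i$; the identity $YY^{\tr}=XX^{\tr}$ combined with $\rank X=r$ forces $RR^{\tr}=I_r$. Writing $M:=[{\bf x}_i^{\tr}A^{-1}{\bf x}_j]_{i,j=1}^r$, Proposition~\ref{prop0} gives $M={\bf v}{\bf v}^{\tr}$ with $v_i={\bf x}_i^{\tr}A^{-1}{\bf x}_i$ and even weight $k_0$; the hypothesis that some entry of $M$ vanishes forces $2\le k_0\le r-1$ (in particular $r\ge 3$, while the case $r=2$ is vacuous). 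One then has
\[ [{\bf y}_i^{\tr}A^{-1}{\bf y}_j]_{i,j=1}^r=R^{\tr}MR=(R^{\tr}{\bf v})(R^{\tr}{\bf v})^{\tr}. \]

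Part~\ref{lemma2ii} falls out immediately. The right-hand side has rank one since $R^{\tr}{\bf v}\neq{\bf 0}$, and its trace equals $\Tr(A^{-1}YY^{\tr})=\Tr(A^{-1}XX^{\tr})=\Tr M=0$, so the matrix lies in $\ronetzero$. If every one of its entries were equal to $1$, Lemma~\ref{lemma1} applied with the roles of the ${\bf x}$'s and ${\bf y}$'s interchanged would force every entry of $M$ to be $1$, contradicting the existence of a zero entry; hence some ${\bf y}_{i'}^{\tr}A^{-1}{\bf y}_{j'}$ vanishes.

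For part~\ref{lemma2i}, given even $k\in\{2,\dots,r-1\}$, fix any ${\bf u}\in\FF_2^r$ of weight $k$. I would construct $R\in GL_r(\FF_2)$ satisfying $R^{\tr}{\bf v}={\bf u}$, $R{\bf j}_r={\bf j}_r$, and $RR^{\tr}=I_r$; then $Y:=XR$ automatically satisfies $\sum_{i=1}^r{\bf y}_i^2=\sum_{i=1}^r{\bf x}_i^2$, $\sum_{i=1}^r{\bf y}_i=\sum_{i=1}^r{\bf x}_i$, and $[{\bf y}_i^{\tr}A^{-1}{\bf y}_j]={\bf u}{\bf u}^{\tr}=Q\left(\begin{smallmatrix}J_{k\times k}&O\\O&O\end{smallmatrix}\right)Q^{\tr}$ for the permutation $Q$ gathering the ones of ${\bf u}$ into the first $k$ positions, while linear independence of the ${\bf y}_i$'s follows from invertibility of $R$. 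To build $R$, I would apply Lemma~\ref{witt-osnovna} inside $\FF_2^r$ with $U=\langle{\bf v},{\bf j}_r\rangle$ (which is two-dimensional since $0<k_0<r$) and the linear map $\tau\colon U\to\FF_2^r$ defined by $\tau({\bf v})={\bf u}$ and $\tau({\bf j}_r)={\bf j}_r$. Injectivity of $\tau$ holds since $0<k<r$ makes ${\bf u},{\bf j}_r$ independent; preservation of the bilinear form on $U$ reduces to the parity identity $k\equiv k_0\pmod{2}$, which is automatic because both are even; and the two conditions of Lemma~\ref{witt-osnovna} hold because ${\bf j}_r\in U$ and $\tau({\bf j}_r)={\bf j}_r$. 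The resulting orthogonal extension has a matrix $T$ with $T^{\tr}T=I_r$, $T{\bf v}={\bf u}$, and $T{\bf j}_r={\bf j}_r$, so $R:=T^{\tr}$ finishes the construction. The one subtle point I anticipate is transpose bookkeeping—running the Witt extension in the direction $\tau({\bf v})={\bf u}$ (rather than the reverse) so that after transposition one gets $R^{\tr}{\bf v}={\bf u}$ and $R{\bf j}_r={\bf j}_r$ as required.
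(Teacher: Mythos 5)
Your proof is correct, but it takes a genuinely different route from the paper's. The paper proves part~\eqref{lemma2i} by an explicit induction in steps of two on the block size $l$ from Proposition~\ref{prop0}, each step modifying four of the ${\bf x}_i$ by the sum ${\bf x}_{i_1}+{\bf x}_{i_2}+{\bf x}_{i_3}+{\bf x}_{i_4}$ of the four chosen vectors; and it proves part~\eqref{lemma2ii} by multiplying the identity $\sum{\bf x}_i^2=\sum{\bf y}_i^2$ on the right by $A^{-1}{\bf x}_1$, $A^{-1}{\bf x}_2$, $A^{-1}{\bf x}_j$ and $A^{-1}{\bf y}_i$ and extracting relations among the entries $c_{ij}={\bf y}_i^{\tr}A^{-1}{\bf y}_j$ by linear independence. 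You instead package both parts into the single observation that $Y=XR$ with $RR^{\tr}=I_r$ (the same mechanism the paper uses inside Lemma~\ref{lemma1}), so that the Gram matrix transforms as $R^{\tr}{\bf v}{\bf v}^{\tr}R=(R^{\tr}{\bf v})(R^{\tr}{\bf v})^{\tr}$: part~\eqref{lemma2ii} then drops out from invariance of rank and trace plus Lemma~\ref{lemma1}, and part~\eqref{lemma2i} becomes the problem of realizing a prescribed weight-$k$ vector ${\bf u}$ as $R^{\tr}{\bf v}$, which you solve by applying the Witt lemma (Lemma~\ref{witt-osnovna}) inside the coordinate space $\FF_2^r$ to $U=\langle{\bf v},{\bf j}_r\rangle$. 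I checked the details: the Gram matrices of $({\bf v},{\bf j}_r)$ and $({\bf u},{\bf j}_r)$ agree because $k$ and $k_0$ are both even, the two side conditions of Lemma~\ref{witt-osnovna} hold since $\tau({\bf j}_r)={\bf j}_r$, and the transpose bookkeeping works out ($T{\bf j}_r={\bf j}_r$ and $T^{\tr}=T^{-1}$ give $T^{\tr}{\bf j}_r={\bf j}_r$; in fact $R{\bf j}_r={\bf j}_r$ holds automatically for any $R$ with $RR^{\tr}=I_r$ over $\FF_2$, since every row must have odd weight). What each approach buys: the paper's argument is elementary and produces explicit formulas for the ${\bf y}_i$ in terms of the ${\bf x}_i$, at the cost of a longer case analysis; yours is shorter, unifies the two parts conceptually as orthogonal conjugation of a rank-one form, and reuses a lemma already present in the paper, at the cost of being less constructive.
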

\begin{proof}
\eqref{lemma2i} The linear independence of ${\bf y}_1,\ldots,{\bf y}_r$ will follow from Lemma~\ref{lema-pomozna} and \eqref{eq17}. By Proposition~\ref{prop0}, there is a permutation matrix $Q\in GL_r(\FF_2)$ s.t.
\begin{equation}\label{eq19}
[{\bf x}_i^{\tr}A^{-1}{\bf x}_j]_{i,j=1}^r=Q\left(\begin{array}{cc}J_{l\times l} & O\\
    O&O\end{array}\right)Q^{\tr}
\end{equation}
for some even $l\in \{2,\ldots,r\}$. By the assumption, $l\leq r-1$. To simplify writings we assume that $Q=I_r$. The claim is obvious if $k=l$. Moreover, since we can apply an induction process, it suffices to prove the claim for the case $k=l+2$ whenever $l+2\leq r-1$ and for the case $k=l-2$ whenever $l-2\geq 2$.
\begin{myenumerate}{Case}
\item Let $k=l+2$ and $l+3\leq r$. Define vectors ${\bf y}_1,\ldots, {\bf y}_r$ by
$${\bf y}_i:=\left\{\begin{array}{lll}{\bf x}_i&\textrm{if}& i\notin\{l,l+1,l+2,l+3\},\\
{\bf x}_i+{\bf x}_l+{\bf x}_{l+1}+{\bf x}_{l+2}+{\bf x}_{l+3}&\textrm{if}& i\in\{l,l+1,l+2,l+3\}.\end{array}\right.$$
Then, \eqref{eq17} is true and
\begin{align*}
[{\bf y}_i^{\tr}A^{-1}{\bf y}_j]_{i,j=1}^r&=\left(\begin{array}{cccc}
J_{(l-1)\times(l-1)}&O_{(l-1)\times 1}&J_{(l-1)\times 3}&O\\
O_{1\times (l-1)}&0&O_{1\times 3}&O\\
J_{3\times(l-1)}&O_{3\times 1}&J_{3\times 3}&O\\
O&O&O&O
\end{array}\right)\\
&=\dot{Q}\left(\begin{array}{cc}J_{(l+2)\times (l+2)} & O\\
    O&O\end{array}\right)\dot{Q}^{\tr}
\end{align*}
for appropriate permutation matrix $\dot{Q}$.

\item Let $k=l-2$ and $l\geq 4$. Define vectors ${\bf y}_1,\ldots, {\bf y}_r$ by
$${\bf y}_i:=\left\{\begin{array}{lll}{\bf x}_i&\textrm{if}& i\notin\{l-2,l-1,l,l+1\},\\
{\bf x}_i+{\bf x}_{l-2}+{\bf x}_{l-1}+{\bf x}_{l}+{\bf x}_{l+1}&\textrm{if}& i\in\{l-2,l-1,l,l+1\}.\end{array}\right.$$
Then, \eqref{eq17} is true and
\begin{align*}
[{\bf y}_i^{\tr}A^{-1}{\bf y}_j]_{i,j=1}^r&=\left(\begin{array}{cccc}
J_{(l-3)\times(l-3)}&O_{(l-3)\times 3}&{\bf j}_{l-3}&O\\
O_{3\times (l-3)}&O_{3\times 3}&O_{3\times 1}&O\\
{\bf j}_{l-3}^{\tr }&O_{1\times 3}&1&O\\
O&O&O&O
\end{array}\right)\\
&=\dot{Q}\left(\begin{array}{cc}J_{(l-2)\times (l-2)} & O\\
    O&O\end{array}\right)\dot{Q}^{\tr}
\end{align*}
for appropriate permutation matrix $\dot{Q}$.
\end{myenumerate}

\eqref{lemma2ii} By Lemma~\ref{lema-pomozna}, $\langle {\bf y}_1,\ldots,{\bf y}_r\rangle=\langle{\bf x}_1,\ldots,{\bf x}_r\rangle$ so ${\bf y}_1,\ldots,{\bf y}_r$ are linearly independent. By \eqref{lemma2i}, we may assume that in \eqref{eq19}, $l=2$ and $Q=I_r$. If we multiply the equation \begin{equation}\label{eq18}
{\bf x}_1^2+\cdots +{\bf x}_r^2={\bf y}_1^2+\cdots +{\bf y}_r^2
\end{equation}
from the right-hand side by $A^{-1}{\bf x}_1$ and $A^{-1}{\bf x}_2$, respectively, we deduce that
$$\sum_{i=1}^r{\bf y}_i\cdot {\bf y}_i^{\tr} A^{-1}{\bf x}_1={\bf x}_1+{\bf x}_2=\sum_{i=1}^r{\bf y}_i\cdot {\bf y}_i^{\tr} A^{-1}{\bf x}_2.$$
The linear independence of ${\bf y}_1,\ldots,{\bf y}_r$ imply that  ${\bf y}_i^{\tr} A^{-1}{\bf x}_1={\bf y}_i^{\tr} A^{-1}{\bf x}_2=:b_i$ for all $i\in \{1,\ldots,r\}$. Moreover, there exists $i_0\in \{1,\ldots,r\}$ such that $b_{i_0}=1$. If we multiply \eqref{eq18} by $A^{-1}{\bf x}_j$, we deduce that
\begin{equation}\label{eq20}
{\bf y}_i^{\tr} A^{-1}{\bf x}_j=0 \qquad (i\in \{1,\ldots,r\}, j\in\{3,\ldots,r\}).
\end{equation}
Let $c_{ij}:={\bf y}_i^{\tr} A^{-1}{\bf y}_j$. Then $c_{ji}=c_{ij}$. If we multiply \eqref{eq18} by $A^{-1}{\bf y}_i$, \eqref{eq20} implies
\begin{equation}\label{eq21}
b_i({\bf x}_1+{\bf x}_2)=\sum_{j=1}^{r} c_{ji} {\bf y}_j\qquad (i\in\{1,\ldots,r\}).
\end{equation}
Let $S_1=\{i\in\{1,\ldots,r\} : b_i=1\}$ and $S_0=\{i\in\{1,\ldots,r\} : b_i=0\}$. If $i\in S_0$, then~\eqref{eq21} and the linear independence of ${\bf y}_1,\ldots,{\bf y}_r$ show that
\begin{equation}\label{eq23}
c_{ij}=c_{ji}=0\qquad (j\in\{1,\ldots,r\}).
\end{equation}
Moreover, if $i_1,i_2\in S_1$, then the same arguments imply that
\begin{equation}\label{eq22}
c_{j i_1}=c_{j i_2}=c_{i_2 j}=c_{i_1 j}\qquad (j\in\{1,\ldots,r\}).
\end{equation}
Since $i_0\in S_1$, \eqref{eq21} implies the existence of $j\in\{1,\ldots,r\}$ such that $c_{j i_0}=1$. By~\eqref{eq23}, $j\in S_1$.
 Consequently, \eqref{eq22} implies that  $c_{i_1 i_0}=1$ for all $i_1\in S_1$ and therefore $c_{i_1 i_2}=1$ for all $i_1,i_2\in S_1$. Along~\eqref{eq23}, it implies that the matrix $[c_{ij}]_{i,j=1}^r=[{\bf y}_i^{\tr} A^{-1}{\bf y}_j]_{i,j=1}^r$ is of rank one. Moreover, \eqref{eq21} implies that ${\bf x}_1+{\bf x}_2=\sum_{j\in S_1} {\bf y}_j$ and consequently
\begin{align*}
0&={\bf x}_1^{\tr} A^{-1}{\bf x}_1+{\bf x}_2^{\tr} A^{-1}{\bf x}_2\\
&=({\bf x}_1+{\bf x}_2)^{\tr}A^{-1}({\bf x}_1+{\bf x}_2)=\sum_{j\in S_1} {\bf y}_j^{\tr} A^{-1}{\bf y}_j=|S_1|\ (\textrm{mod}~2),
\end{align*}
i.e. $[{\bf y}_i^{\tr} A^{-1}{\bf y}_j]_{i,j=1}^r\in \ronetzero$. By Lemma~\ref{lemma1}, ${\bf y}_{i'}^{\tr} A^{-1} {\bf y}_{j'}=0$ for some $i',j'$.
\end{proof}

\section{Proof of Theorem~\ref{thm-nonalter}}
\label{s5}

\begin{proof}[Proof of Theorem~\ref{thm-nonalter} for $r\leq 2$] If $r=1$, then obviously $d(A,B)=r$. This is also claimed by Theorem~\ref{thm-nonalter} because $1=\det B=\det(A+{\bf x}_1^{2})=1+{\bf x}_1^{\tr}A^{-1}{\bf x}_1$ implies that ${\bf x}_1^{\tr}A^{-1}{\bf x}_1=0$, and therefore the assumption \eqref{nonalter_iii} holds.

Let $r=2$. By Lemma~\ref{l2}, $1=\det B=\det(I_2+[{\bf x}_i^{\tr}A^{-1}{\bf x}_j]_{i,j=1}^2)$, i.e.
\begin{equation}\label{eq31}
{\bf x}_1^{\tr}A^{-1}{\bf x}_1+{\bf x}_2^{\tr}A^{-1}{\bf x}_2+{\bf x}_1^{\tr}A^{-1}{\bf x}_1\cdot {\bf x}_2^{\tr}A^{-1}{\bf x}_2-({\bf x}_1^{\tr}A^{-1}{\bf x}_2)^2=0.
\end{equation}
We separate two cases.
\begin{myenumerate}{Case}
\item Let ${\bf x}_i^{\tr}A^{-1}{\bf x}_i=0$ for some $i\in\{1,2\}$. If $j\in \{1,2\}\backslash\{i\}$, then~\eqref{eq31} implies that ${\bf x}_j^{\tr}A^{-1}{\bf x}_j={\bf x}_i^{\tr}A^{-1}{\bf x}_j$, and therefore the assumption~\eqref{nonalter_iii} holds. Moreover, $\det(A+{\bf x}_i^2)=1+{\bf x}_i^{\tr}A^{-1}{\bf x}_i=1$. Hence, $A+{\bf x}_i^2$ is adjacent to both $A$ and $B$, i.e. $d(A,B)\leq 2$. From~\eqref{eq3} we deduce that $d(A,B)=2$ as claimed.

\item Let ${\bf x}_1^{\tr}A^{-1}{\bf x}_1=1={\bf x}_2^{\tr}A^{-1}{\bf x}_2$. Then, \eqref{eq31} implies that ${\bf x}_1^{\tr}A^{-1}{\bf x}_2=1$. As indicated in Remark~\ref{opomba2}, we need to prove that $d(A,B)=4$. Since ${\bf x}_1^{\tr}A^{-1}{\bf x}_1\neq 0$, it follows that $A^{-1}$ is nonalternate. Hence, $A^{-1}=P^{\tr}P$ for some $P\in GL_n(\FF_2)$. Since vectors $P{\bf x}_1, P{\bf x}_2$ are linearly independent, they are nonzero, and at least one of them is different from ${\bf j}_n$. We may assume that $P{\bf x}_1=:(\alpha_1,\ldots, \alpha_n)^{\tr}$ is such. Then there exist $j,k$ such that $\alpha_j=0$ and $\alpha_k=1$. Let ${\bf y}=P^{-1}({\bf e}_j+{\bf e}_k)$. Then ${\bf y}^{\tr}A^{-1}{\bf y}=0$ and ${\bf y}^{\tr}A^{-1}{\bf x}_1=1$. Consequently, regardless of the value ${\bf y}^{\tr}A^{-1}{\bf x}_2$, Lemma~\ref{l2} implies that
    \begin{align*}
    &\det(A+{\bf y}^2)=1+{\bf y}^{\tr}A^{-1}{\bf y}=1,\\
    &\det(A+{\bf y}^2+{\bf x}_1^2)=\det\left(I_2+\left(\begin{array}{cc}{\bf y}^{\tr}A^{-1}{\bf y}&{\bf y}^{\tr}A^{-1}{\bf x}_1\\
    {\bf y}^{\tr}A^{-1}{\bf x}_1&{\bf x}_1^{\tr}A^{-1}{\bf x}_1\end{array}\right)\right)=1,\\
    &\det(A+{\bf y}^2+{\bf x}_1^2+{\bf x}_2^2)\\
    &\qquad=\det\left(I_3+\left(\begin{array}{ccc}{\bf y}^{\tr}A^{-1}{\bf y}&{\bf y}^{\tr}A^{-1}{\bf x}_1&{\bf y}^{\tr}A^{-1}{\bf x}_2\\
    {\bf y}^{\tr}A^{-1}{\bf x}_1&{\bf x}_1^{\tr}A^{-1}{\bf x}_1&{\bf x}_1^{\tr}A^{-1}{\bf x}_2\\
     {\bf y}^{\tr}A^{-1}{\bf x}_2&{\bf x}_1^{\tr}A^{-1}{\bf x}_2&{\bf x}_2^{\tr}A^{-1}{\bf x}_2\end{array}\right)\right)=1,
    \end{align*}
    which means that $A\sim A+{\bf y}^2 \sim A+{\bf y}^2+{\bf x}_1^2\sim  A+{\bf y}^2+{\bf x}_1^2+{\bf x}_2^2\sim B$. Hence, $d(A,B)\leq 4$. By Lemma~\ref{lemma6}, $d(A,B)=4$.\qedhere
\end{myenumerate}
\end{proof}

\begin{proof}[Proof of Theorem~\ref{thm-nonalter} for $r=3$]
By Lemma~\ref{l2},
\begin{equation}\label{eq37}
1=\det B=\det\left(I_3+[{\bf x}_i^{\tr}A^{-1}{\bf x}_j]_{i,j=1}^3\right).
\end{equation}
Since alternate matrices have even rank, there exists $k\in \{1,2,3\}$ such that ${\bf x}_k^{\tr}A^{-1}{\bf x}_k=0$. We separate two cases.
\begin{myenumerate}{Case}
\item Let $[{\bf x}_i^{\tr}A^{-1}{\bf x}_j]_{i,j=1}^3\in \ronetzero$. Then we may assume that
\begin{equation}\label{eq34}
[{\bf x}_i^{\tr}A^{-1}{\bf x}_j]_{i,j=1}^3=\left(\begin{array}{ccc}1&1&0\\1&1&0\\0&0&0\end{array}\right)
\end{equation}
(otherwise we permute vectors ${\bf x}_1, {\bf x}_2, {\bf x}_3$). Let $Q\in GL_n(\FF_2)$ be any invertible matrix with ${\bf x}_1, {\bf x}_2, {\bf x}_3$ as the first three columns. Then, \eqref{eq34} is the top-left $3\times 3$ block of the invertible matrix $Q^{\tr}A^{-1}Q\in\sgl$. Hence, $n>3$. Moreover, a straightforward computation of the determinant shows that no member of $SGL_4(\FF_2)$ has~\eqref{eq34} in the top-left corner. Hence, $n\geq 5$. By Lemma~\ref{witt1}, there exists ${\bf w}\in \FF_2^n$ such that~\eqref{eq26} or~\eqref{eq27} is true. We may assume~\eqref{eq26} (otherwise we permute vectors ${\bf x}_1, {\bf x}_2$). Now, we can apply Lemma~\ref{l2} as in the proof for $r=2$ to deduce that matrices in the path
\begin{align*}
A&\sim A + ({\bf x}_1+{\bf x}_2+{\bf x}_3)^2\\
&\sim A + ({\bf x}_1+{\bf x}_2+{\bf x}_3)^2 + {\bf w}^2\\
&\sim  A + ({\bf x}_1+{\bf x}_2+{\bf x}_3)^2 + {\bf w}^2 + ({\bf x}_1+{\bf x}_3+{\bf w})^2\\
&\sim A + ({\bf x}_1+{\bf x}_2+{\bf x}_3)^2 + {\bf w}^2 + ({\bf x}_1+{\bf x}_3+{\bf w})^2 + ({\bf x}_1+{\bf x}_2+{\bf w})^2\\
&=A+{\bf x}_1^2+{\bf x}_2^2+{\bf x}_3^2+({\bf x}_2+{\bf x}_3+{\bf w})^2\sim B
\end{align*}
have determinant one. Hence, $d(A,B)\leq 5$. By Lemma~\ref{lemma6}, $d(A,B)=5$.

\item Let $[{\bf x}_i^{\tr}A^{-1}{\bf x}_j]_{i,j=1}^3\notin \ronetzero$. Since ${\bf x}_k^{\tr}A^{-1}{\bf x}_k=0$, the assumption of~\eqref{nonalter_iii} is satisfied. We may assume that $k=3$ (otherwise we suitably permute vectors ${\bf x}_1$, ${\bf x}_2$, ${\bf x}_3$).
    We claim that there exist distinct $i,j\in\{1,2,3\}$ such that
    \begin{align}
    \nonumber 0={\bf x}_i^{\tr}A^{-1}{\bf x}_i&={\bf x}_j^{\tr}A^{-1}{\bf x}_j={\bf x}_i^{\tr}A^{-1}{\bf x}_j\\
    \label{eq36} &\textrm{or}\\
    \nonumber 0={\bf x}_i^{\tr}A^{-1}{\bf x}_i&,\ 1={\bf x}_j^{\tr}A^{-1}{\bf x}_j={\bf x}_i^{\tr}A^{-1}{\bf x}_j.
    \end{align}
    To prove~\eqref{eq36} we separate three subcases.
    \begin{myenumerateB}{Subcase}
    \item Let ${\bf x}_1^{\tr}A^{-1}{\bf x}_1=0={\bf x}_2^{\tr}A^{-1}{\bf x}_2$. If~\eqref{eq36} is not true, then the invertible matrix in the right-hand side of~\eqref{eq37} equals $J_{3\times 3}$, a contradiction.
    \item Let ${\bf x}_1^{\tr}A^{-1}{\bf x}_1=1={\bf x}_2^{\tr}A^{-1}{\bf x}_2$.  If~\eqref{eq36} is not true, then
    ${\bf x}_1^{\tr}A^{-1}{\bf x}_3=0={\bf x}_2^{\tr}A^{-1}{\bf x}_3$. Since $[{\bf x}_i^{\tr}A^{-1}{\bf x}_j]_{i,j=1}^3\notin \ronetzero$, we have  ${\bf x}_1^{\tr}A^{-1}{\bf x}_2=0$. This is a contradiction because the invertible matrix in the right-hand side of~\eqref{eq37} is
    $$\left(\begin{array}{ccc}0&0&0\\
    0&0&0\\
    0&0&1\end{array}\right).$$
    \item Let $\{{\bf x}_1^{\tr}A^{-1}{\bf x}_1, {\bf x}_2^{\tr}A^{-1}{\bf x}_2\}=\{0,1\}$. If~\eqref{eq36} is not true, then we get in contradiction as above, i.e. the matrix in the right-hand side of~\eqref{eq37} equals
        $$\left(\begin{array}{ccc}1&0&1\\
          0&0&0\\
    1&0&1\end{array}\right)\qquad \textrm{or}\qquad \left(\begin{array}{ccc}0&0&0\\
          0&1&1\\
    0&1&1\end{array}\right).$$
    \end{myenumerateB}

    \quad Now, if $i,j$ are as in~\eqref{eq36}, then Lemma~\ref{l2} implies that matrices in the path
$$
A\sim A + {\bf x}_i^2\sim A + {\bf x}_i^2 +{\bf x}_j^2\sim  A + {\bf x}_1^2+{\bf x}_2^2+{\bf x}_3^2=B
$$
have determinant one. Hence, $d(A,B)\leq 3$. By~\eqref{eq3}, $d(A,B)=3$ as claimed.\qedhere
\end{myenumerate}
\end{proof}

The proof of Theorem~\ref{thm-nonalter} for $r\geq 4$ applies the induction process. In it, we say that a pair of matrices $(A,B)$ in $\sgl$ satisfy the condition \eqref{nonalter_i},  \eqref{nonalter_ii}, or \eqref{nonalter_iii} with respect to vectors ${\bf x}_1,\ldots,{\bf x}_r$ if the conditions \eqref{nonalter_i},  \eqref{nonalter_ii}, or \eqref{nonalter_iii} in the statement of Theorem~\ref{thm-nonalter} are satisfied, respectively.

\begin{proof}[Proof of Theorem~\ref{thm-nonalter} for $r\geq 4$]
We already know that the claim is true for $r\in \{1,2,3\}$. Let $r\geq 4$ and assume the claim is true for values $1,2,\ldots,r-1$. We separate three cases.
\begin{myenumerate}{Case}
\item Let \eqref{nonalter_ii} be satisfied and assume \eqref{nonalter_i} is not, i.e. $[{\bf x}_i^{\tr}A^{-1}{\bf x}_j]_{i,j=1}^r\neq J_{r\times r}$. Then, there exists $i_0$ such that ${\bf x}_{i_0}^{\tr}A^{-1}{\bf x}_j=0={\bf x}_j^{\tr}A^{-1}{\bf x}_{i_0}$ for all $j$. Hence, $[{\bf x}_i^{\tr}A^{-1}{\bf x}_j]_{i,j\in \{1,\ldots,r\}\backslash\{i_0\}}\in \ronetzero$. By Proposition~\ref{prop0}, there exists a permutation matrix $Q\in GL_{r-1}(\FF_2)$ such that
    $$[{\bf x}_i^{\tr}A^{-1}{\bf x}_j]_{i,j\in \{1,\ldots,r\}\backslash\{i_0\}}=Q\left(\begin{array}{cc}J_{k\times k}& O\\O&O\end{array}\right)Q^{\tr}$$
    for some even $k\geq 2$. Let $B'=A+\sum_{i\neq i_{0}} {\bf x}_i^2$. By Lemma~\ref{l2},
    \begin{align*}
\det B'&=\det\left(I_{r-1}+[{\bf x}_i^{\tr}A^{-1}{\bf x}_j]_{i,j\in \{1,\ldots,r\}\backslash\{i_0\}}\right)\\
&=\det \left(Q\left(\begin{array}{cc}I_k+J_{k\times k}& O\\O&I_{r-1-k}\end{array}\right)Q^{\tr}\right)\\
&=\det (I_k+{\bf j}_{k}^{2})\\
&=1+{\bf j}_{k}^{\tr}{\bf j}_{k}=1
\end{align*}
because $k$ is even. Hence, $B'\in \sgl$. By the induction hypothesis, $d(A,B')\leq (r-1)+2=r+1$. Consequently,
$d(A,B)\leq d(A,B')+d(B',B)\leq r+2$. By Lemma~\ref{lemma6}, $d(A,B)=r+2$ as claimed.

\item\label{case2nonalter} Let \eqref{nonalter_iii} be satisfied. Then, there exists $i$ such that ${\bf x}_i^{\tr}A^{-1}{\bf x}_i=0$. By Lemma~\ref{lemma7}, there exist ${\bf y}_1,\ldots,{\bf y}_r\in \FF_2^n$ such that $\sum_{i=1}^r {\bf x}_i^2=\sum_{i=1}^r {\bf y}_i^2$ and both matrices $B''=A+\sum_{i=1}^{r-2} {\bf y}_i^2$ and $B'=A+\sum_{i=1}^{r-1} {\bf y}_i^2$ are in  $\sgl$. Moreover, ${\bf y}_1,\ldots,{\bf y}_r$ are linearly independent by Lemma~\ref{lema-pomozna}. By Lemma~\ref{l2},
    \begin{equation}\label{eq38}
    1=\det B'=\det\left(I_{r-1}+[{\bf y}_i^{\tr}A^{-1}{\bf y}_j]_{i,j=1}^{r-1}\right)
    \end{equation}
    and
    \begin{equation} \label{eq39}
    1=\det B''=\det\left(I_{r-2}+[{\bf y}_i^{\tr}A^{-1}{\bf y}_j]_{i,j=1}^{r-2}\right).
    \end{equation}
    Therefore, there exist $i_0,j_0\leq r-1$ such that
    \begin{equation} \label{eq40}
    {\bf y}_{i_0}^{\tr}A^{-1}{\bf y}_{j_0}=0.
    \end{equation}
    In fact, the opposite would force the matrices in the right-hand side of~\eqref{eq38}, \eqref{eq39} to be alternate, a contradiction because such matrices have even rank, while one of the numbers $r-2$ and $r-1$ is odd. We split Case~\ref{case2nonalter} into three subcases, depending on the type of the pair $(A,B')$ with respect to vectors ${\bf y}_1,\ldots, {\bf y}_{r-1}$.
    \begin{myenumerateB}{Subcase}
    \item\label{subcase_nov_1} Suppose that $(A,B')$ satisfy the condition \eqref{nonalter_ii}. By~Lemma~\ref{lemma2}~\eqref{lemma2i} and \eqref{eq40}, we may assume that
    $$[{\bf y}_i^{\tr}A^{-1}{\bf y}_j]_{i,j=1}^{r-1}=Q\left(\begin{array}{cc}J_{2\times 2} & O\\
    O&O\end{array}\right)Q^{\tr}$$
    for some permutation matrix $Q\in GL_{r-1}(\FF_2)$. If $\dot{Q}\in GL_{r}(\FF_2)$ is the permutation matrix with $Q$ in the top-left corner and $1$ in the $(r,r)$-entry, then
    \begin{equation}\label{eq43}
    [{\bf y}_i^{\tr}A^{-1}{\bf y}_j]_{i,j=1}^{r}=\dot{Q}\left(\begin{array}{ccc}J_{2\times 2} & O& \dot{{\bf b}}\\
    O&O&\ddot{{\bf b}}\\
    \dot{{\bf b}}^{\tr}&\ddot{{\bf b}}^{\tr}&b_r\end{array}\right)\dot{Q}^{\tr}
    \end{equation}
    for some $\dot{{\bf b}}=(b_1,b_2)^{\tr}\in \FF_2^2$, $\ddot{{\bf b}}=(b_3,\ldots,b_{r-1})^{\tr}\in\FF_2^{r-3}$, $b_r\in\FF_2$. Since
    \begin{align}
    \nonumber 1=\det B&=\det\left(I_{r}+[{\bf y}_i^{\tr}A^{-1}{\bf y}_j]_{i,j=1}^{r}\right)\\
    \nonumber&=\det \left(\dot{Q}\left(\begin{array}{ccc}J_{2\times 2}+I_2 & O& \dot{{\bf b}}\\
    O&I_{r-3}&\ddot{{\bf b}}\\
    \dot{{\bf b}}^{\tr}&\ddot{{\bf b}}^{\tr}&1+b_r\end{array}\right)\dot{Q}^{\tr}\right)\\
    \label{eq41}&=\det \left(\begin{array}{ccc}J_{2\times 2}+I_2 & O& \dot{{\bf b}}\\
    O&I_{r-3}&\ddot{{\bf b}}\\
    \dot{{\bf b}}^{\tr}&\ddot{{\bf b}}^{\tr}&1+b_r\end{array}\right),
    \end{align}
    the last column of the matrix in~\eqref{eq41} is not a linear combination of the other columns. Hence,
    $1+b_r\neq b_3+\cdots+ b_{r-1}$, i.e.
    \begin{equation}\label{eq42}
    b_r=b_3+\cdots+ b_{r-1}.
    \end{equation}
    Since $[{\bf x}_i^{\tr}A^{-1}{\bf x}_j]_{i,j=1}^{r}\notin\ronetzero$ by
    the assumption of Case~\ref{case2nonalter}, Lemma~\ref{lemma2}~\eqref{lemma2ii} implies that $(b_1,\ldots,b_r)$ is not the zero vector. Moreover, \eqref{eq43} implies that
    $$[{\bf y}_{\sigma(i)}^{\tr}A^{-1}{\bf y}_{\sigma(j)}]_{i,j=1}^{r}=\left(\begin{array}{ccc}J_{2\times 2} & O& \dot{{\bf b}}\\
    O&O&\ddot{{\bf b}}\\
    \dot{{\bf b}}^{\tr}&\ddot{{\bf b}}^{\tr}&b_r\end{array}\right)$$
    for some permutation $\sigma$ of the set $\{1,\ldots,r\}$.

    \qquad If there exists $s\in\{1,2\}$ such that $b_s=1$ and $\{t\}=\{1,2\}\backslash\{s\}$, then we can consider the matrix $\dot{B}'=A+\sum_{i\neq \sigma(t)} {\bf y}_i^2$. Since
    $$[{\bf y}_{\sigma(i)}^{\tr}A^{-1}{\bf y}_{\sigma(j)}]_{i,j\in\{1,\ldots,r\}\backslash\{t\}}=\left(\begin{array}{ccc}1 & O& 1\\
    O&O&\ddot{{\bf b}}\\
    1&\ddot{{\bf b}}^{\tr}&b_r\end{array}\right),$$
    we have $\dot{B}'\in\sgl$ by Lemma~\ref{l2}. If $(A,\dot{B}')$ is of type \eqref{nonalter_ii} for vectors ${\bf y}_{\sigma(1)},\ldots, {\bf y}_{\sigma(t-1)}, {\bf y}_{\sigma(t+1)},\ldots, {\bf y}_{\sigma(r)}$, then $b_r=1$ and $\ddot{{\bf b}}=0$, which contradicts~\eqref{eq42}. Clearly, $(A,\dot{B}')$ is not of type \eqref{nonalter_i}. Hence, $(A,\dot{B}')$ is of type \eqref{nonalter_iii}. By the induction hypothesis,
    $d(A,\dot{B}')=r-1$. Consequently, $d(A,B)\leq d(A,\dot{B}')+d(\dot{B}',B)=r$. Therefore, \eqref{eq3} implies that $d(A,B)=r$.

    \qquad If $b_1=0=b_2$ and there exists $k\in \{3,\ldots, r-1\}$ such that $b_k=0$, then consider the matrix $\ddot{B}'=A+\sum_{i\neq \sigma(k)} {\bf y}_i^2$. Since
    $${\tiny [{\bf y}_{\sigma(i)}^{\tr}A^{-1}{\bf y}_{\sigma(j)}]_{i,j\in\{1,\ldots,r\}\backslash\{k\}}=\left(\begin{array}{ccccccccc}
    1 & 1& 0&\cdots&0&0&\cdots&0 &0\\
    1 & 1& 0&\cdots&0&0&\cdots&0 &0\\
    0 & 0& 0&\cdots&0&0&\cdots&0 &b_3\\
    \vdots&\vdots&\vdots&\ddots&\vdots&\vdots&\cdots&\vdots&\vdots\\
    0 & 0& 0&\cdots&0&0&\cdots&0 &b_{k-1}\\
    0 & 0& 0&\cdots&0&0&\cdots&0 &b_{k+1}\\
    \vdots&\vdots&\cdots&\vdots&\vdots&\vdots&\ddots&\vdots&\vdots\\
    0 & 0& 0&\cdots&0&0&\cdots&0 &b_{r-1}\\
    0 & 0& b_3&\cdots&b_{k-1}&b_{k+1}&\cdots&b_{r-1}&b_r\end{array}\right)}$$
    and $b_r=\sum_{i=3, i\neq k}^{r-1} b_i$, Lemma~\ref{l2} implies that
    $$\det \ddot{B}'=\det\left(I_{r-1}+[{\bf y}_{\sigma(i)}^{\tr}A^{-1}{\bf y}_{\sigma(j)}]_{i,j\in\{1,\ldots,r\}\backslash\{k\}}\right)=1,$$
    i.e. $\ddot{B}'\in \sgl$. Clearly, the pair $(A, \ddot{B}')$ is not of type \eqref{nonalter_i} with respect to vectors ${\bf y}_{\sigma(1)},\ldots, {\bf y}_{\sigma(k-1)}, {\bf y}_{\sigma(k+1)},\ldots, {\bf y}_{\sigma(r)}$. It is not of type \eqref{nonalter_ii} either because $(b_1,\ldots,b_r)$ is not the zero vector. Therefore, it is of type \eqref{nonalter_iii}. As above, we now deduce $d(A,\ddot{B}')=r-1$ and $d(A,B)=r$ by applying the induction step.

    \qquad Finally, to end Subcase~\ref{subcase_nov_1}, let $b_1=0=b_2$ and $1=b_3=\cdots=b_{r-1}$. By~\eqref{eq42},
    $$b_r=\left\{\begin{array}{ll}1 & \textrm{if}\ r\ \textrm{is even},\\
    0 & \textrm{if}\ r\ \textrm{is odd}.\end{array}\right.$$ Define vectors ${\bf z}_1,\ldots, {\bf z}_r$ by
$${\bf z}_i:=\left\{\begin{array}{lll}{\bf y}_{\sigma(i)}&\textrm{if}& i\notin\{1,2,3,r\},\\
{\bf y}_{\sigma(i)}+{\bf y}_{\sigma(1)}+{\bf y}_{\sigma(2)}+{\bf y}_{\sigma(3)}+{\bf y}_{\sigma(r)}&\textrm{if}& i\in\{1,2,3,r\}.\\\end{array}\right.$$
Then $\sum_{i=1}^r {\bf z}_i^2=\sum_{i=1}^r {\bf y}_i^2$, and a straightforward computation shows that
\begin{equation}\label{eq44}
[{\bf z}_{i}^{\tr}A^{-1}{\bf z}_{j}]_{i,j\in\{2,\ldots,r\}}=\left(\begin{array}{cc}(1+b_r)J_{2\times 2}& J_{2\times (r-3)}\\
J_{(r-3)\times 2}&O_{(r-3)\times (r-3)}\end{array}\right)+{\bf e}_2^2.
\end{equation}
Let $\dddot{B}'=A+\sum_{i=2}^{r} {\bf z}_i^2$. By Lemma~\ref{l2}, $\dddot{B}'$ has the same determinant as matrix $I_{r-1} + [{\bf z}_{i}^{\tr}A^{-1}{\bf z}_{j}]_{i,j\in\{2,\ldots,r\}}$, which equals the invertible matrix~\eqref{eq2} from Lemma~\ref{lemica} where $a=b_r$ and $m=r-3$. Therefore, $\dddot{B}'\in\sgl$. Since $r\geq 4$, it is clear from~\eqref{eq44} that the pair $(A,\dddot{B}')$ is of type \eqref{nonalter_iii} with respect to vectors ${\bf z}_2,\ldots, {\bf z}_r$ . Hence, we deduce that $d(A,B)=r$ as above.

    \item Suppose that $(A,B')$ satisfy the condition \eqref{nonalter_iii}. Then by induction hypothesis, $d(A,B')=r-1$ and consequently,
    $d(A,B)\leq d(A,B')+d(B',B)=r$. As above, \eqref{eq3} implies that $d(A,B)=r$.

    \item Suppose that $(A,B')$ satisfy the condition \eqref{nonalter_i}.  Then ${\bf y}_i^{\tr}A^{-1}{\bf y}_i=1$ for all $i\in\{1,\ldots,r-1\}$. Since $B'', B'\in \sgl$ and one of the numbers $r-2, r-1$ is odd, we get a contradiction by Lemma~\ref{lemma3}.
    \end{myenumerateB}

\item Let \eqref{nonalter_i} be satisfied. By Lemma~\ref{lemma3}, $r$ is even. Suppose firstly that ${\bf x}_i^{\tr}A^{-1}{\bf x}_j=1$ for all $i,j$. By Lemma~\ref{witt2}, there exists ${\bf x}_{r+1}\in \FF_2^n$ such that~\eqref{eq45} and~\eqref{eq46} is true. We may assume that $s=1$ in \eqref{eq45}-\eqref{eq46} (otherwise we permute vectors ${\bf x}_1,\ldots,{\bf x}_r$). Let $B_1=A+\sum_{j=1}^{r+1}{\bf x}_j^2$. By Lemmas~\ref{l2} and~\ref{schur},
    \begin{align}
    \nonumber \det B_1&=\det\left(I_{r+1}+[{\bf x}_i^{\tr}A^{-1}{\bf x}_j]_{i,j\in\{1,\ldots,r+1\}}\right)\\
    \nonumber &=\det \left(\begin{array}{cc}J_{r\times r}+I_r&{\bf e}_1\\
    {\bf e}_1^{\tr}&1\end{array}\right)\\
    \label{eq47}&=\det(J_{r\times r}+I_r+{\bf e}_1^2).
    \end{align}
    Clearly, the columns space of matrix $J_{r\times r}+I_r+{\bf e}_1^2$ equals the whole space $\FF_2^r$. Hence, $B_1\in \sgl$. Let $B_2=A+\sum_{j=1,j\neq 2}^{r+1}{\bf x}_j^2$. As in~\eqref{eq47}, we deduce that $\det B_2= \det(J_{(r-1)\times (r-1)}+I_{r-1}+{\bf e}_1^2)$ and $B_2\in \sgl$. Moreover, the pair $(A,B_2)$ is of the type~\eqref{nonalter_iii} with respect to the vectors ${\bf x}_1, {\bf x}_3, \ldots, {\bf x}_{r+1}$, which are linearly independent by Remark~\ref{remark}. By Case~\ref{case2nonalter}, $d(A,B_2)=r$. Consequently,
    $$d(A,B)\leq d(A,B_2)+d(B_2,B_1)+d(B_1,B)=r+2.$$
    By Lemma~\ref{lemma6}, $d(A,B)=r+2$.

    \qquad Suppose now that there exist $i,j$ such that ${\bf x}_i^{\tr}A^{-1}{\bf x}_j=0$. By Lemma~\ref{lemma66}, $d(A,B)\geq r+1$. Define vectors ${\bf y}_1,\ldots,{\bf y}_{r+1}$ by
    \begin{align*}
    {\bf y}_i&={\bf x}_1+{\bf x}_2+{\bf x}_3+{\bf x}_i\qquad (i=1,2,3),\\
    {\bf y}_4&={\bf x}_1+{\bf x}_2+{\bf x}_3,\\
    {\bf y}_i&={\bf x}_{i-1}\qquad (i=5,\ldots,r+1).
    \end{align*}
    Then, $\sum_{i=1}^{r+1} {\bf y}_i^2=\sum_{i=1}^r {\bf x}_i^2$. Let the matrix $D=[d_{ij}]_{i,j=1}^{r+1}$ be defined by $$D=I_{r+1}+ [{\bf y}_i^{\tr}A^{-1}{\bf y}_j]_{i,j=1}^{r+1},$$ and let $D_{ij}$ denote the determinant of the $r\times r$ submatrix of $D$, which is obtained by deleting the $i$-th row and the $j$-th column. By Lemma~\ref{l2}, $$1=\det B=\det D.$$
    The Laplace expansion along the $i$-th row yields $1=\sum_{j=1}^{r+1} d_{ij} D_{ij}$ for all $i$. Since
    $$d_{ii}=\left\{\begin{array}{ll}1& \textrm{if}\ i\in\{1,2,3\},\\
    0& \textrm{if}\ i\in\{4,\ldots,r+1\},\end{array}\right.$$
    $r$ is even, $d_{ij}=d_{ji}$, and $D_{ij}=D_{ji}$, in characteristic two we deduce that $1=\sum_{i=1}^{r+1}\sum_{j=1}^{r+1} d_{ij} D_{ij}=D_{11}+D_{22}+D_{33}$. Consequently, there exists $i\in\{1,2,3\}$ such that $D_{ii}=1$. We may assume that $D_{11}=1$. Let $B'=A+\sum_{i=2}^{r+1} {\bf y}_i^2$. Since $$\det B'=\det(I_{r}+ [{\bf y}_i^{\tr}A^{-1}{\bf y}_j]_{i,j=2}^{r+1})=D_{11}=1,$$
    we have $B'\in\sgl$. The diagonal of $[{\bf y}_i^{\tr}A^{-1}{\bf y}_j]_{i,j=2}^{r+1}$ is $(0,0,1,\ldots,1)$.
    If
    \begin{equation}\label{eq48}
    [{\bf y}_i^{\tr}A^{-1}{\bf y}_j]_{i,j=2}^{r+1}\neq \left(\begin{array}{cc}O_{2\times 2}&O_{2\times (r-2)}\\
    O_{(r-2)\times 2}&J_{(r-2)\times (r-2)}\end{array}\right),
    \end{equation}
    then $(A,B')$ is of type \eqref{nonalter_iii} with respect to the vectors ${\bf y}_2,\ldots,{\bf y}_{r+1}$, which are linearly independent. Consequently, $d(A,B')=r$ by Case~\ref{case2nonalter} and therefore
    $d(A,B)\leq d(A,B')+d(B',B)=r+1$. Hence, $d(A,B)=r+1$ as claimed.

    \qquad Finally, assume that the two matrices in~\eqref{eq48} are the same. Then, for each $i\geq 5$ we have $0={\bf y}_2^{\tr}A^{-1}{\bf y}_i={\bf x}_1^{\tr}A^{-1}{\bf x}_{i-1}+{\bf x}_3^{\tr}A^{-1}{\bf x}_{i-1}$, i.e. ${\bf x}_1^{\tr}A^{-1}{\bf x}_{i-1}={\bf x}_3^{\tr}A^{-1}{\bf x}_{i-1}$. If we replace ${\bf y}_2$ by ${\bf y}_3$ in this computation we further deduce that
        $${\bf x}_1^{\tr}A^{-1}{\bf x}_{i-1}={\bf x}_2^{\tr}A^{-1}{\bf x}_{i-1}={\bf x}_3^{\tr}A^{-1}{\bf x}_{i-1}=:a_{i-1}\qquad (5\leq i\leq r+1).$$
        Consequently, for $i,j\geq 5$ we have
        \begin{align*}
        0=1+1&={\bf y}_4^{\tr}A^{-1}{\bf y}_i+ {\bf y}_4^{\tr}A^{-1}{\bf y}_j\\
        &=({\bf x}_1+{\bf x}_2+{\bf x}_3)^{\tr}A^{-1}{\bf x}_{i-1}+ ({\bf x}_1+{\bf x}_2+{\bf x}_3)^{\tr}A^{-1}{\bf x}_{j-1}\\
        &=a_{i-1}+a_{j-1}.
        \end{align*}
        Hence, $a_4=\cdots=a_r$. Moreover, for $i,j\geq 5$ we have also
        $$1={\bf y}_i^{\tr}A^{-1}{\bf y}_j={\bf x}_{i-1}^{\tr}A^{-1}{\bf x}_{j-1}.$$
        Since ${\bf x}_i^{\tr}A^{-1}{\bf x}_i=1$ for all $i$, the expansion of the equalities ${\bf y}_2^{\tr}A^{-1}{\bf y}_4=0={\bf y}_3^{\tr}A^{-1}{\bf y}_4$ implies that
        \begin{equation}\label{eq49}
        {\bf x}_1^{\tr}A^{-1}{\bf x}_2={\bf x}_1^{\tr}A^{-1}{\bf x}_3={\bf x}_2^{\tr}A^{-1}{\bf x}_3.
        \end{equation}
        Consequently,
        $$0={\bf y}_2^{\tr}A^{-1}{\bf y}_3={\bf x}_1^{\tr}A^{-1}{\bf x}_1+{\bf x}_1^{\tr}A^{-1}{\bf x}_2+{\bf x}_3^{\tr}A^{-1}{\bf x}_1+{\bf x}_3^{\tr}A^{-1}{\bf x}_2=1+{\bf x}_1^{\tr}A^{-1}{\bf x}_2,$$
        which means that all values in~\eqref{eq49} equal $1$. Hence,
        $$[{\bf x}_i^{\tr}A^{-1}{\bf x}_j]_{i,j=1}^{r}=\left(\begin{array}{cc}
    J_{3\times 3}&a_4 J_{3\times (r-3)}\\
    a_4 J_{(r-3)\times 3}&J_{(r-3)\times (r-3)}\end{array}\right).$$
    Since ${\bf x}_i^{\tr}A^{-1}{\bf x}_j=0$ for some $i,j$, it follows that $a_4=0$. Consequently, Lemma~\ref{l2} implies that
    $$1=\det B=\det \left(\begin{array}{cc}
    J_{3\times 3}+I_3&O\\
    O&J_{(r-3)\times (r-3)}+I_{r-3}\end{array}\right),$$
    a contradiction because matrix $J_{3\times 3}+I_3$ is not invertible.\qedhere
\end{myenumerate}
\end{proof}

\section{Proof of Theorem~\ref{thm-alter}}
\label{s6}

\begin{proof}[Proof of Theorem~\ref{thm-alter}~\eqref{alter_i}] Since $A-B$ is an alternate matrix of rank $r$ and \eqref{eq3} holds, it suffices to prove that $d(A,B)\leq r+1$. Define
\begin{align}
\label{eq5}S_0&=\left\{i\in \{1,\ldots,r+1\}: {\bf x}_i^{\tr}A^{-1}{\bf x}_i=0\right\},\\
\label{eq6}S_1&=\left\{i\in \{1,\ldots,r+1\}: {\bf x}_i^{\tr}A^{-1}{\bf x}_i=1\right\}.
\end{align}
Since ${\bf x}_{r+1}^{\tr}A^{-1}{\bf x}_{r+1}=\sum_{i=1}^r {\bf x}_i^{\tr}A^{-1}{\bf x}_i$ and $r$ is even, it follows that $|S_0|\geq 1$.
We separate three cases.
\begin{myenumerate}{Case}
\item Let $|S_0|=1$ and $r=2$.  Since ${\bf x}_1+{\bf x}_2+{\bf x}_3=0$ and each pair of vectors among ${\bf x}_1,{\bf x}_2,{\bf x}_3$ are linearly independent, we may assume that $S_0=\{1\}$ and $S_1=\{2,3\}$. Let $a={\bf x}_1^{\tr}A^{-1}{\bf x}_2$. Then,
    $${\bf x}_1^{\tr}A^{-1}{\bf x}_3=a,\quad {\bf x}_2^{\tr}A^{-1}{\bf x}_3=1+a.$$
    If $X$ is the $n\times 3$ matrix with ${\bf x}_i$ as its $i$-th column, then Lemma~\ref{l2} implies that
    \begin{equation}\label{eq4}
    1=\det B=\det (A+XX^{\tr})=\det(I_3+X^{\tr}A^{-1}X)=\det\left(\begin{array}{ccc}1&a&a\\
    a&0&1+a\\
    a&1+a&0\end{array}\right).
    \end{equation}
    Consequently, $a=0$. Hence, $[{\bf x}_i^{\tr} A^{-1}{\bf x}_j]_{i,j=1}^{r+1}$ is of rank-one, a contradiction.

\item Let $|S_0|>1$ and $r=2$. Since ${\bf x}_1+{\bf x}_2+{\bf x}_3=0$, we deduce that $|S_0|=3$, i.e. ${\bf x}_i^{\tr} A^{-1} {\bf x}_i=0$ for $i=1,2,3$. If $a={\bf x}_1^{\tr}A^{-1}{\bf x}_2$. Then,
    $${\bf x}_1^{\tr}A^{-1}{\bf x}_3=a={\bf x}_2^{\tr}A^{-1}{\bf x}_3.$$
    As in \eqref{eq4} we deduce that
    $$1=\det B=\det\left(\begin{array}{ccc}1&a&a\\
    a&1&a\\
    a&a&1\end{array}\right),$$
    i.e. $a=0$. Define $B'=A+{\bf x}_1^2+{\bf x}_2^2$. Then
    $$\det B'=\det\left(I_2+\left(\begin{array}{cc}{\bf x}_1^{\tr}A^{-1}{\bf x}_1&{\bf x}_1^{\tr}A^{-1}{\bf x}_2\\
    {\bf x}_1^{\tr}A^{-1}{\bf x}_2&{\bf x}_3^{\tr}A^{-1}{\bf x}_3\end{array}\right)\right)=1.$$
    By Theorem~\ref{thm-nonalter}, $d(A,B')=2$. Hence, $d(A,B)\leq d(A,B')+d(B',B)=3=r+1$.

\item Let $r\geq 4$. If $|S_0|=1$, i.e. $S_0=\{i_0\}$ for some $i_0\in\{1,\ldots,r+1\}$, then there exist $j_1,k_1,l_1\in S_1$ and we can define ${\bf y}_1,\ldots,{\bf y}_{r+1}$ by
    \begin{align*}
    {\bf y}_{i_0}&={\bf x}_{j_1}+{\bf x}_{k_1}+{\bf x}_{l_1},\\
    {\bf y}_{j_1}&={\bf x}_{i_0}+{\bf x}_{k_1}+{\bf x}_{l_1},\\
    {\bf y}_{k_1}&={\bf x}_{i_0}+{\bf x}_{j_1}+{\bf x}_{l_1},\\
    {\bf y}_{l_1}&={\bf x}_{i_0}+{\bf x}_{j_1}+{\bf x}_{k_1},
    \end{align*}
    and ${\bf y}_{i}={\bf x}_{i}$ for all $i\in \{1,\ldots,r+1\}\backslash\{i_0,j_1,k_1,l_1\}$. Then $\sum_{i=1}^{r+1} {\bf x}_i^2=\sum_{i=1}^{r+1} {\bf y}_i^2$,  ${\bf y}_{r+1}={\sum}_{i=1}^r {\bf y}_i$, vectors ${\bf y}_1,\ldots,{\bf y}_r$ are linearly independent, and
    $$|\left\{i\in \{1,\ldots,r+1\}: {\bf y}_i^{\tr}A^{-1}{\bf y}_i=0\right\}|=|\{j_1,k_1,l_1\}|=3.$$
    Moreover, if $X$ and $Y$ are the $n\times (r+1)$ matrices with ${\bf x}_i$ and ${\bf y}_i$ as its $i$-th column, respectively, then $Y=XR$ for appropriate $R\in GL_{r+1}(\FF_2)$, and matrices $[{\bf y}_i A^{-1}{\bf y}_j]_{i,j=1}^{r+1}=Y^{\tr}A^{-1}Y$, $[{\bf x}_i A^{-1}{\bf x}_j]_{i,j=1}^{r+1}=X^{\tr}A^{-1}X$ have equal rank. By replacing ${\bf x}_1,\ldots,{\bf x}_{r+1}$ with ${\bf y}_1,\ldots,{\bf y}_{r+1}$, we may assume that $|S_0|\geq 2$.

    \quad To proceed, observe that as in \eqref{eq4} we can deduce that $\det D=1$ where $D=[d_{ij}]_{i,j=1}^{r+1}:=I_{r+1}+X^{\tr} A^{-1} X$. Let $D_{ij}$ be the determinant of the submatrix obtained by removing the $i$-th row and the $j$-th column of $D$. Then, for each $i\in \{1,\ldots,r+1\}$, the Laplace expansion along the $i$-th row yields $1=\sum_{j=1}^{r+1}d_{ij} D_{ij}$. Since $D$ is symmetric, we have $d_{ij}=d_{ji}$ and $D_{ij}=D_{ji}$. Since $r$ is even,
    $$1=\sum_{i=1}^{r+1}\sum_{j=1}^{r+1} d_{ij} D_{ij}=\sum_{i=1}^{r+1} d_{ii} D_{ii}.$$
    Therefore, there exists $i\in \{1,\ldots,r+1\}$ such that $d_{ii}=1=D_{ii}$. We may assume that $i=r+1$ (otherwise we permute vectors ${\bf x}_1,\ldots, {\bf x}_{r+1}$ in appropriate order). Define $B':=A+{\bf x}_1^2+\cdots+ {\bf x}_r^2$. Then,
    $$\det B'=\det\left(I_r+[{\bf x}_i^{\tr} A^{-1}{\bf x}_j]_{i,j=1}^{r}\right)=\det D_{r+1,r+1}=1.$$
    Since $|S_0|\geq 2$, the matrix $[{\bf x}_i^{\tr} A^{-1}{\bf x}_j]_{i,j=1}^{r}$ must have at least one zero on the diagonal. By Remark~\ref{opomba1},
     $\rank([{\bf x}_i^{\tr} A^{-1}{\bf x}_j]_{i,j=1}^{r})=\rank([{\bf x}_i^{\tr} A^{-1}{\bf x}_j]_{i,j=1}^{r+1})\neq 1$. By Theorem~\ref{thm-nonalter}, $d(A,B')=r$. Hence, $d(A,B)\leq d(A,B')+d(B',B)=r+1$.\qedhere
\end{myenumerate}
\end{proof}

\begin{proof}[Proof of Theorem~\ref{thm-alter}~\eqref{alter_ii}]
Let $S_0,S_1$ be as in \eqref{eq5}, \eqref{eq6}. By Remark~\ref{opomba1}, $0=\Tr([{\bf x}_i^{\tr} A^{-1}{\bf x}_j]_{i,j=1}^{r+1})=\sum_{i=1}^{r+1} {\bf x}_i^{\tr} A^{-1}{\bf x}_i$, so $|S_1|=2t$ for some $t\leq \frac{r}{2}$. Since the matrix $[{\bf x}_i^{\tr} A^{-1}{\bf x}_j]_{i,j=1}^{r+1}$ is of rank one, it is not alternate. Hence, its diagonal is not zero everywhere, and therefore $t\geq 1$. We may assume that
\begin{equation}\label{eq7}
S_1=\{1,2,\ldots,2t\}\quad \textrm{and}\quad S_0=\{2t+1,2t+2,\ldots,r+1\};
\end{equation}
(otherwise we permute the vectors ${\bf x}_1,\ldots, {\bf x}_{r+1}$).  Since $B-A$ is an alternate matrix of rank $r$, \eqref{eq3} implies that
$$d(A,B)\geq r+1.$$
Moreover, Lemma~\ref{l2}, Proposition~\ref{prop0}, and \eqref{eq7} imply that
\begin{align}
\nonumber B^{-1}&=A^{-1}-A^{-1}X(I_{r+1}+X^{\tr}A^{-1}X)X^{\tr}A^{-1}\\
\label{eq9}&=A^{-1}-A^{-1}X\left(\begin{array}{cc} I_{2t}+J_{(2t)\times (2t)}&O\\O&I_{r+1-2t}\end{array}\right)X^{\tr}A^{-1},
\end{align}
where $X$ is the $n\times (r+1)$ matrix with ${\bf x}_i$ as its $i$-th column. We divide the rest of the proof into three steps.
\begin{myenumerate}{Step}
\item\label{step1} We claim that $d(A,B)\geq r+2$.\smallskip

Suppose that $d(A,B)=r+1$. Then, $\rank(B'-B)=1$ and $d(A,B')=r$ for some $B'\in\sgl$. In particular, $B'=B+{\bf y}^2$ for some nonzero ${\bf y}\in \FF_2^n$.

\qquad If ${\bf y}\notin \langle{\bf x}_1,\ldots,{\bf x}_r\rangle$, then
$$B'- A=B-A+{\bf y}^2=P\left(\begin{array}{ccc}J_{r\times r}+I_r&O_{r\times 1}&O\\
O_{1\times r}&1&O\\
O&O&O\end{array}\right)P^{\tr},$$
for any $P\in GL_n(\FF_2)$ with ${\bf x}_1,\ldots,{\bf x}_r,{\bf y}$ as the first $r+1$ columns. In particular, $\rank(B'-A)=r+1$. Hence, \eqref{eq3} implies that $d(A,B')\geq r+1$, a contradiction.

\qquad Therefore, ${\bf y}\in \langle{\bf x}_1,\ldots,{\bf x}_r\rangle$, i.e. ${\bf y}=\sum_{j=1}^s {\bf x}_{i_j}$ for some subset $\{i_1,\ldots,i_s\}\subseteq \{1,\ldots, r\}$ of distinct indices. If $s=r$, then $B'=A+{\bf x}_1^2+\cdots +{\bf x}_r^2$. From Remark~\ref{opomba1} and \eqref{eq7} we deduce that $[{\bf x}_i^{\tr}A^{-1}{\bf x}_j]_{i,j=1}^r\in \ronetzero$. Consequently,  $d(A,B')=r+2$ by Theorem~\ref{thm-nonalter}, a contradiction. Therefore $1\leq s <r$. If $S_1':=\{j\in \{1,\ldots,s\}: i_{j}\leq 2t\}$ and $j\leq s$, then~\eqref{eq9} implies
\begin{align*}
{\bf x}_{i_j}^{\tr} &B^{-1}{\bf x}_{i_j}={\bf x}_{i_j}^{\tr} A^{-1}{\bf x}_{i_j}-{\bf x}_{i_j}^{\tr} A^{-1}X\left(\begin{array}{cc} I_{2t}+J_{(2t)\times (2t)}&O\\O&I_{r+1-2t}\end{array}\right)X^{\tr}A^{-1}{\bf x}_{i_j}\\
&=\left\{\begin{array}{lll} 1-(\underbrace{1\cdots 1}_{2t} 0 \cdots 0)\left(\begin{array}{cc} I_{2t}+J_{(2t)\times (2t)}&O\\O&I_{r+1-2t}\end{array}\right)\left(\begin{array}{c}1\\
\vdots\\
1\\
0\\
\vdots\\
0\end{array}\right)
&\textrm{if}& j\in S_1',\\0-(0 \cdots 0)\left(\begin{array}{cc} I_{2t}+J_{(2t)\times (2t)}&O\\O&I_{r+1-2t}\end{array}\right)\left(\begin{array}{c}
0\\
\vdots\\
0\end{array}\right)&\textrm{if}& j\notin S_1',\end{array}\right.\\
&=\left\{\begin{array}{lll}
1&\textrm{if}& j\in S_1',\\
0&\textrm{if}& j\notin S_1'.\end{array}\right.
\end{align*}
By Lemma~\ref{l2},
$$1=\det B'=\det (B+{\bf y}^2)=1+{\bf y}^{\tr}B^{-1}{\bf y}=1+\sum_{j=1}^s {\bf x}_{i_j}^{\tr} B^{-1}{\bf x}_{i_j}=1 + |S_1'|\ (\textrm{mod}~2),$$
which means that $|S_1'|$ is even. We denote $\{i_{s+1},\ldots,i_{r}\}=\{1,\ldots,r\}\backslash \{i_1,\ldots,i_s\}$ and separate two cases to end the proof of Step~\ref{step1}.
\begin{myenumerateB}{Case}
\item\label{case1} Let $s$ be odd. Then we define ${\bf w}_1,\ldots, {\bf w}_r$ as in Lemma~\ref{l1}, which together with Remark~\ref{opomba1} imply that
\begin{align*}
&B'=A+{\bf w}_1^2+\cdots +{\bf w}_r^2,\qquad \rank\big([{\bf w}_i^{\tr} A^{-1}{\bf w}_j]_{i,j=1}^{r}\big)=1,\\
&\Tr [{\bf w}_{i}^{\tr}A^{-1}{\bf w}_{j}]_{i,j\in\{1,\ldots,r\}}=\sum_{j=1}^s {\bf x}_{i_j}^{\tr}A^{-1}{\bf x}_{i_j}= |S_1'|\ (\textrm{mod}~2)=0.
\end{align*}
By Theorem~\ref{thm-nonalter}, $d(A,B')=r+2$, a contradiction. Hence, $d(A,B)\geq r+2$.

\item Let $s$ be even. Then we repeat the proof in Case~\ref{case1} where we replace Lemma~\ref{l1} by Lemma~\ref{l3}.
\end{myenumerateB}
\item\label{step2} We claim the matrix $B^{-1}-A^{-1}$ is not alternate.\smallskip

If $B^{-1}-A^{-1}$ is alternate, then it follows from \eqref{eq9} that $$X\left(\begin{array}{cc} I_{2t}+J_{(2t)\times (2t)}&O\\O&I_{r+1-2t}\end{array}\right)X^{\tr}=\sum_{1\leq i<j\leq 2t} {\bf x}_i\circ {\bf x}_j +{\bf x}_{2t+1}^2+\cdots+{\bf x}_{r+1}^2$$
is alternate. Therefore, ${\bf x}_{2t+1}^2+\cdots +{\bf x}_{r+1}^2$ is alternate as well. If we select a matrix $P\in GL_n(\FF_2)$ such that $P {\bf x}_i={\bf e}_i$ for $i=1,\ldots,r$, then the equality ${\bf x}_{r+1}=\sum_{i=1}^r {\bf x}_{i}$ implies that matrix
$$P({\bf x}_{2t+1}^2+\cdots +{\bf x}_{r+1}^2)P^{\tr}=\left(\begin{array}{ccc}O_{(2t)\times (2t)}&O_{(2t)\times (r-2t)}&O\\
O_{(r-2t)\times (2t)}&I_{r-2t}&O\\
O&O&O\end{array}\right)+\left(\begin{array}{cc}J_{r\times r}& O\\
O& O\end{array}\right)$$
is alternate, which is not true.

\item We claim that $d(A,B)=r+2$.\smallskip

Since $[{\bf x}_i^{\tr} A^{-1}{\bf x}_j]_{i,j=1}^{r+1}=X^{\tr} A^{-1} X$ is of rank one, it follows from~\eqref{eq7} that $$[{\bf x}_i^{\tr} A^{-1}{\bf x}_j]_{i,j=1}^{r+1}=\left(\begin{array}{cc} J_{(2t)\times (2t)}&O\\O&O\end{array}\right).$$ In particular,
$${\bf x}_i^{\tr} A^{-1}X=\left\{\begin{array}{lll}(0,\ldots,0,0,\ldots,0)&\textrm{if}&i>2t,\\
(\underbrace{1,\ldots,1}_{2t},0,\ldots,0)&\textrm{if}&i\leq 2t.\end{array}\right.$$
Consequently, \eqref{eq9} implies that
\begin{equation}\label{eq11}
{\bf x}_i^{\tr}(B^{-1}-A^{-1}){\bf x}_j=0\qquad (i,j\in \{1,\ldots,r+1\}).
\end{equation}
By Step~\ref{step2}, there exists nonzero ${\bf z}\in\FF_2^n$ such that
\begin{equation}\label{eq12}
{\bf z}^{\tr}(B^{-1}-A^{-1}){\bf z}=1.
\end{equation}
From~\eqref{eq11} we deduce that
${\bf z}\notin\langle {\bf x}_1,\ldots, {\bf x}_r\rangle$. Define column vector
$${\bf v}=\left\{\begin{array}{lll}{\bf z}&\textrm{if}&{\bf z}^{\tr}B^{-1}{\bf z}=0,\\
{\bf z}+{\bf x}_1&\textrm{if}&{\bf z}^{\tr}B^{-1}{\bf z}=1.\end{array}\right.$$
Then, \eqref{eq7}, \eqref{eq11}, \eqref{eq12} imply that ${\bf v}^{\tr} B^{-1}{\bf v}=0$ and ${\bf v}^{\tr} A^{-1}{\bf v}=1$ . In particular, $B'=B+{\bf v}^{2}$ is an invertible matrix. From Lemma~\ref{lemma4} we deduce that $B'=A+\sum_{i=1}^{r+1} {\bf v}_i^2$ where
${\bf v}_i={\bf x}_i+{\bf v}$ for all $i\in\{1,\ldots,r+1\}$. Since vectors ${\bf v}_1,\ldots,{\bf v}_{r+1}$ span the vector space $\langle {\bf x}_1,\ldots,{\bf x}_{r}, {\bf z}\rangle$, they are linearly independent. Moreover,
\begin{align*}
\Tr [{\bf v}_{i}^{\tr}A^{-1}{\bf v}_{j}]_{i,j\in\{1,\ldots,r+1\}}&=\sum_{i=1}^{r+1} {\bf v}_{i}^{\tr}A^{-1}{\bf v}_{i}\\
&=\sum_{i=1}^{r+1} {\bf x}_{i}^{\tr}A^{-1}{\bf x}_{i} + (r+1) {\bf v}^{\tr}A^{-1}{\bf v} \\
&=\Tr [{\bf x}_{i}^{\tr}A^{-1}{\bf x}_{j}]_{i,j\in\{1,\ldots,r+1\}}+(r+1) {\bf v}^{\tr}A^{-1}{\bf v}\\
&=0+1=1.
\end{align*}
Since $r+1$ is odd, it follows from Lemma~\ref{lemma3} that ${\bf v}_{i}^{\tr}A^{-1}{\bf v}_{i}=0$ for some~$i$. Consequently, Theorem~\ref{thm-nonalter} implies that $d(A,B')=r+1$ and therefore $d(A,B)\leq d(A,B')+d(B',B)=r+2$. From Step~\ref{step1} we infer that $d(A,B)=r+2$.\qedhere
\end{myenumerate}
\end{proof}

\section{Diameter of $\Gamma_n$}
\label{s7}

To determine the diameter of graph $\Gamma_n$ we need two more lemmas.

\begin{lemma}\label{diameter_lemma1}
Let $n\geq 2$.
\begin{enumerate}
\item\label{i-diameter} A pair $(A,B)$ of matrices in $\sgl$ that satisfy the condition \eqref{nonalter_i} from Theorem~\ref{thm-nonalter} where ${\bf x}_i^{\tr} A^{-1} {\bf x}_j=1$ for all $i,j$ exists if and only if $r\in \{1,\ldots, \lfloor\frac{n+1}{2}\rfloor\}$ is even.
\item\label{ii-diameter} A pair $(A,B)$ of matrices in $\sgl$ that satisfy the condition \eqref{nonalter_i} from Theorem~\ref{thm-nonalter} where ${\bf x}_i^{\tr} A^{-1} {\bf x}_j=0$ for some $i,j$ exists if and only if $r\in \{1,\ldots, n\}\backslash\{2\}$ is even.
\item \label{iii-diameter} A pair $(A,B)$ of matrices in $\sgl$ that satisfy the condition \eqref{nonalter_ii} from Theorem~\ref{thm-nonalter} exists if and only if $r\in \{2,3,\ldots, \lfloor\frac{n+1}{2}\rfloor\}$.
\item \label{iv-diameter} A pair $(A,B)$ of matrices in $\sgl$ that satisfy the condition \eqref{nonalter_iii} from Theorem~\ref{thm-nonalter} exists for each $r\in \{1,\ldots,n\}$.
\end{enumerate}
\end{lemma}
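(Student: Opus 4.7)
\emph{Plan.} The lemma has four parts; I plan to treat each as the combination of a necessity argument (constraints on $r$) and a sufficiency argument (explicit construction). The general strategy is to deduce necessity from the structural results already proved (Lemma~\ref{lemma3}, Lemma~\ref{orthocode1}, Proposition~\ref{prop0}) and to produce sufficiency by exhibiting an explicit witness with $A=I$ whenever possible.

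\emph{Necessity.} Parts (i) and (ii) require ${\bf x}_i^{\tr}A^{-1}{\bf x}_i=1$ for every $i$, so Lemma~\ref{lemma3} forces $r$ to be even. In (i) the Gram matrix $[{\bf x}_i^{\tr}A^{-1}{\bf x}_j]$ equals $J_{r\times r}$, hence has rank one, and its nonzero diagonal forces $A^{-1}$ to be nonalternate; writing $A^{-1}=P^{\tr}P$ and setting $\dot{\bf x}_i=P{\bf x}_i$, Lemma~\ref{orthocode1} gives $r\leq\lfloor(n+1)/2\rfloor$. In (ii) the exclusion $r\neq 2$ is extracted directly from equation~\eqref{eq31}: the simultaneous hypotheses ${\bf x}_1^{\tr}A^{-1}{\bf x}_1={\bf x}_2^{\tr}A^{-1}{\bf x}_2=1$ and $B\in\sgl$ collapse \eqref{eq31} to $({\bf x}_1^{\tr}A^{-1}{\bf x}_2)^2=1$, which contradicts the hypothesis that some off-diagonal entry is zero. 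For (iii), Proposition~\ref{prop0} shows that any element of $\ronetzero$ is permutation-equivalent to $\bigl(\begin{smallmatrix}J_{k\times k}&O\\O&O\end{smallmatrix}\bigr)$ with even $k\geq 2$, yielding $r\geq 2$; the nonzero diagonal then reuses the $A^{-1}=P^{\tr}P$ argument to give $r\leq\lfloor(n+1)/2\rfloor$. Part (iv) imposes only the linear-independence constraint $r\leq n$.

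\emph{Sufficiency.} For (i), take ${\bf x}_i=\sum_{j=1}^{2i-1}{\bf e}_j$ for $i=1,\ldots,r$: these are linearly independent (the constraint $2r-1\leq n$ is precisely $r\leq\lfloor(n+1)/2\rfloor$), their Gram matrix is $J_{r\times r}$, and Lemma~\ref{l2} gives $\det B=\det(I_r+J_{r\times r})=r+1=1$ in $\FF_2$ since $r$ is even. The same vectors serve (iii) for even $r$; for odd $r$ I would take ${\bf x}_1={\bf e}_1$, ${\bf x}_2={\bf e}_1+{\bf e}_2+{\bf e}_3$ together with ${\bf x}_i={\bf e}_{2i-2}+{\bf e}_{2i-1}$ for $3\leq i\leq r$, whose Gram matrix is $J_{2\times 2}\oplus O_{(r-2)\times(r-2)}\in\ronetzero$, and then $I_r+\text{Gram}=(I_2+J_{2\times 2})\oplus I_{r-2}$ is visibly invertible. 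For (iv) with $2r\leq n$, the disjoint-pair choice ${\bf x}_i={\bf e}_{2i-1}+{\bf e}_{2i}$ gives zero Gram matrix (so condition~\eqref{nonalter_iii}) and $B$ is block-diagonal with invertible $2\times 2$ blocks. For (ii) in the range $3k\leq n$ with $r=2k$, a block construction with pairs ${\bf e}_{3j-2}$, ${\bf e}_{3j-2}+{\bf e}_{3j-1}+{\bf e}_{3j}$ produces a Gram matrix equal to a block-diagonal of $k\geq 2$ copies of $J_{2\times 2}$, which has all diagonal entries equal to $1$, many zero off-diagonal entries, rank $k\geq 2$, and is not in $\ronetzero$. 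The remaining regimes ($r$ close to $n$ in (ii), $r>n/2$ in (iv)) I would handle by the reverse-engineering trick $B=I$, $A=I+XX^{\tr}$ with $X\in M_{n\times r}(\FF_2)$ of full column rank chosen so that $I+X^{\tr}X$ is invertible (equivalently, $A\in\sgl$) and $X^{\tr}A^{-1}X$ displays the prescribed diagonal and rank pattern; block-diagonal choices such as $X=\bigoplus_j\bigl(\begin{smallmatrix}1&1\\0&1\end{smallmatrix}\bigr)$, padded by trailing $1\times 1$ blocks when needed, suffice.

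\emph{Main obstacle.} The genuinely delicate step is the sufficiency argument for (ii) and (iv) when $r$ is close to $n$, where the disjoint-support templates used for smaller $r$ exhaust the available coordinates. Invertibility of $B=A+\sum{\bf x}_i^2$ becomes, via Lemma~\ref{l2}, the single global identity $\det(I_r+X^{\tr}A^{-1}X)=1$, and this must be reconciled simultaneously with the prescribed rank and diagonal constraints on $X^{\tr}A^{-1}X$ and with linear independence of the columns of $X$. The bookkeeping for this reconciliation is where the real work lies; the other ingredients—the parity from Lemma~\ref{lemma3}, the rank-one bound from Lemma~\ref{orthocode1}, the canonical form from Proposition~\ref{prop0}, and the determinant formula from Lemma~\ref{l2}—are routine.
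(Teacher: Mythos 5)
Your necessity arguments coincide with the paper's (parity from Lemma~\ref{lemma3}, the bound $r\leq\lfloor\tfrac{n+1}{2}\rfloor$ via $A^{-1}=P^{\tr}P$ and Lemma~\ref{orthocode1}, the exclusion of $r=2$ in (ii) from the determinant identity, and $r\geq 2$ in (iii) from Proposition~\ref{prop0}), and your witnesses for (i), (iii), and the small-$r$ regimes of (ii) and (iv) are correct. The genuine gap is exactly where you flag it: the sufficiency of (ii) for $2n/3<r\leq n$ and of (iv) for $n/2<r\leq n$. Your proposed fallback does not close it, for two reasons. First, the ``reverse-engineering trick'' $B=I$, $A=I+XX^{\tr}$ is circular: by Corollary~\ref{c1} the all-ones diagonal of $X^{\tr}A^{-1}X$ is equivalent to the all-ones diagonal of $X^{\tr}X$, and by the remark following Theorem~\ref{thm-alter} the condition ``some off-diagonal entry of $X^{\tr}A^{-1}X$ vanishes'' is (given the diagonal) equivalent to ``some entry of $X^{\tr}X$ vanishes''; so the task reduces to finding $r$ linearly independent odd-weight columns in $\FF_2^n$ with $\det(I_r+X^{\tr}X)=1$ and a zero inner product --- which is precisely the forward problem with $A=I$ that you have not solved in this range. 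Second, the concrete witness fails: the columns of $\bigoplus_j\left(\begin{smallmatrix}1&1\\0&1\end{smallmatrix}\right)$ alternate between weight $1$ and weight $2$, so half the diagonal entries of $X^{\tr}X$ (hence of $X^{\tr}A^{-1}X$) are $0$, killing condition \eqref{nonalter_i} needed for part (ii); and a trailing $1\times 1$ block $e_r$ contributes a diagonal block $1+1=0$ to $I_r+X^{\tr}X$, making it singular, so the padding fails for odd $r=n$ in part (iv).

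The missing idea is the paper's use of \emph{overlapping supports}, which packs $r$ suitable vectors into only $r\leq n$ coordinates while keeping the determinant under control. For (ii) with $r=4k$ the paper takes ${\bf x}_i={\bf e}_i$ for $i\leq 2k$ and ${\bf x}_i={\bf e}_{i-2k}+\cdots+{\bf e}_i$ for $i>2k$ (each of odd weight $2k+1$), so that $I_r+[{\bf x}_i^{\tr}{\bf x}_j]$ has the block form $\left(\begin{smallmatrix}O&L\\L^{\tr}&*\end{smallmatrix}\right)$ with $L$ unitriangular, whence the determinant is $\det(L)^2=1$ regardless of the unspecified block; a small modification handles $r=4k-2$. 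For (iv) the paper uses the chain ${\bf e}_1+{\bf e}_2,\,{\bf e}_2,\,{\bf e}_3+{\bf e}_4,\,{\bf e}_4,\ldots$ (with ${\bf x}_r={\bf e}_{n-2}+{\bf e}_n$ when $r=n$ is odd), whose Gram matrix has a zero diagonal entry and an invertible $2\times 2$ minor $\left(\begin{smallmatrix}0&1\\1&1\end{smallmatrix}\right)$, ruling out conditions \eqref{nonalter_i} and \eqref{nonalter_ii} simultaneously for every $r\leq n$. Without some such construction your proof is incomplete precisely in the regime that determines the diameter in Corollary~\ref{diameter}.
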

\begin{proof}
\eqref{i-diameter} If ${\bf x}_i^{\tr} A^{-1} {\bf x}_j=1$ for all $i,j$, then $r$ is even by Lemma~\ref{lemma3}. Moreover, $A^{-1}$ is nonalternate. Hence, $A^{-1}=PP^{\tr}$ for some $P\in GL_n(\FF_2)$ and vectors ${\bf y}_i:=P^{\tr}{\bf x}_i$ satisfy ${\bf y}_i^{\tr}{\bf y}_j=1$ for all $i,j$.
In particular, $\rank\left([{\bf y}_i^{\tr}{\bf y}_j]_{i,j=1}^{r}\right)=1$. By Lemma~\ref{orthocode1}, $r\leq \lfloor\frac{n+1}{2}\rfloor$.

Conversely, if $r\leq \big\lfloor\frac{n+1}{2}\big\rfloor$, then vectors ${\bf x}_i=\sum_{k=1}^{2i-1} {\bf e}_k$ for $i=1,\ldots,r$ satisfy ${\bf x}_i^{\tr}{\bf x}_j=1$ for all $i,j$. By Lemma~\ref{l2}, the matrix $B:=I_n+\sum_{i=1}^r {\bf x}_i^2$ satisfies $\det B=\det(I_r+J_r)=\det(I_r+{\bf j}_r^{2})=1+{\bf j}_r^{\tr}{\bf j}_r=1$ whenever $r$ is even. Hence, $(A,B)$ where $A=I_n$ is a required pair.

\eqref{ii-diameter} If $(A,B)$ is any such pair, then, as above, $r$ is even by Lemma~\ref{lemma3}. In the case $r=2$ we would necessarily have ${\bf x}_1^{\tr} A^{-1} {\bf x}_2=0$. Consequently, Lemma~\ref{l2} would imply that $1=\det B=\det(I_2+I_2)=0$, a contradiction.

Suppose now that $r\in \{1,\ldots, n\}\backslash\{2\}$ is even. If $r=4k$ for some $k\geq 1$, then we set $A=I_n$ and $B=I_n+\sum_{i=1}^r {\bf x}_i^2$ where
\begin{equation}\label{eq50}
{\bf x}_i=\left\{\begin{array}{ll}{\bf e}_i&\textrm{if}\ i\in \{1,\ldots,2k\},\\
{\bf e}_{i-2k}+{\bf e}_{i-2k+1}+\cdots +{\bf e}_{i}&\textrm{if}\ i\in \{2k+1,\ldots,4k\}.\end{array}\right.
\end{equation}
Clearly, ${\bf x}_1,\ldots, {\bf x}_{r}$ are linearly independent and $A\in \sgl$. By Lemma~\ref{l2}, $B$ and $I_r+[{\bf x}_i^{\tr}{\bf x}_j]_{i,j=1}^r$ have the same determinant. Observe that $$I_r+[{\bf x}_i^{\tr}{\bf x}_j]_{i,j=1}^r=\left(\begin{array}{cc}O&L\\
L^{\tr}&A_{22}\end{array}\right)$$
where $A_{22}\in S_{2k}(\FF_2)$ and $L=[l_{ij}]_{i,j=1}^{2k}$ is the lower triangular matrix with $l_{ij}=1$ if and only if  $i\geq j$. Consequently, $L$ is invertible and therefore $B\in \sgl$. If $r=4k-2$ for some $k\geq 2$, then we replace vectors~\eqref{eq50} by
\begin{equation*}
{\bf x}_i=\left\{\begin{array}{ll}{\bf e}_i&\textrm{if}\ i\in \{1,\ldots,2k-1\},\\
{\bf e}_1+{\bf e}_2+\cdots +{\bf e}_{2k+1}&\textrm{if}\ i=2k,\\
{\bf e}_{i+1-2k}+{\bf e}_{i+2-2k}+\cdots +{\bf e}_{i+1}&\textrm{if}\ i\in \{2k+1,\ldots,4k-3\},\\
{\bf e}_{1}+{\bf e}_{2k-1}+{\bf e}_{2k}&\textrm{if}\ i=4k-2.
\end{array}\right.
\end{equation*}
Now, the only difference is that $L=[l_{ij}]_{i,j=1}^{2k-1}$ satisfies $l_{ij}=1$ if and only if  $i\geq j$ or $(i,j)=(1,2k-1)$. Again, $L\in GL_{2k-1}(\FF_2)$ and therefore $B\in \sgl$.

\eqref{iii-diameter}. Let the pair $(A,B)$ satisfy $[{\bf x}_i^{\tr}A^{-1}{\bf x}_j]_{i,j=1}^r\in \ronetzero$. By Proposition~\ref{prop0}, $r\geq 2$.
Moreover, ${\bf x}_i^{\tr}A^{-1}{\bf x}_i=1$ for some $i$. Hence, $A^{-1}$ is not alternate and there exists $P\in GL_n(\FF_2)$ such that $A^{-1}=PP^{\tr}$. Linearly independent vectors ${\bf y}_j:=P^{\tr} {\bf x}_j$ satisfy $[{\bf y}_i^{\tr}{\bf y}_j]_{i,j=1}^r\in \ronetzero$. By Lemma~\ref{orthocode1}, $r\leq \lfloor\frac{n+1}{2}\big\rfloor$.

Conversely, let $r\in \{2,3,\ldots, \lfloor\frac{n+1}{2}\rfloor\}$, in particular $n\geq 3$. Define $A=I_n$ and $B=I_n+\sum_{i=1}^r {\bf x}_i^2$ where
$${\bf x}_1={\bf e}_1,\quad {\bf x}_2={\bf e}_1+{\bf e}_2+{\bf e}_3,\quad {\bf x}_i={\bf e}_{2i-2}+{\bf e}_{2i-1}\ \textrm{if}\ i\in \{3,\ldots,r\}.$$
Then, $A\in\sgl$, $$[{\bf x}_i^{\tr}A^{-1}{\bf x}_j]_{i,j=1}^r=\left(\begin{array}{cc}J_{2\times 2}&O\\
O&O\end{array}\right)\in\ronetzero,$$
and $\det B=\det(I_{r}+[{\bf x}_i^{\tr}A^{-1}{\bf x}_j]_{i,j=1}^r)=1$ by Lemma~\ref{l2}, i.e. $B\in \sgl$.

\eqref{iv-diameter} Let $r\in\{1,\ldots,n\}$, $A=I_n$, and $B=I_n+\sum_{i=1}^{r}{\bf x}_i^2$ where
$${\bf x}_i=\left\{\begin{array}{ll}{\bf e}_i& \textrm{if}\ i\ \textrm{is even},\\
{\bf e}_{n-2}+{\bf e}_n& \textrm{if}\ i=r=n\ \textrm{is odd},\\
{\bf e}_i+{\bf e}_{i+1}& \textrm{otherwise}.\end{array}\right.$$ It is straightforward to check that ${\bf x}_1,\ldots, {\bf x}_r$ are linearly independent and $A,B\in\sgl$. If $r=1$, then ${\bf x}_1^{\tr}A^{-1}{\bf x}_1=0$ implies that the pair $(A,B)$ does not fit the assumptions \eqref{nonalter_i}, \eqref{nonalter_ii} from Theorem~\ref{thm-nonalter}. If $r\geq 2$, then the same conclusion is obtained by observing in addition that
${\bf x}_2^{\tr}A^{-1}{\bf x}_2=1={\bf x}_1^{\tr}A^{-1}{\bf x}_2$.
\end{proof}

\begin{lemma}\label{diameter_lemma2}
Let $n\geq 2$ and suppose $0<r\leq n$ is even.
\begin{enumerate}
\item\label{v-diameter} A pair $(A,B)$ of matrices in $\sgl$ that satisfy the condition \eqref{alter_i} from Theorem~\ref{thm-alter} exists if and only if $(r,n)\notin\{(2,2), (2,3)\}$.

\item\label{vi-diameter} A pair $(A,B)$ of matrices in $\sgl$ that satisfy the condition \eqref{alter_ii} from Theorem~\ref{thm-alter} exists if and only if $r\leq \lfloor\frac{n+1}{2}\rfloor$.
\end{enumerate}
\end{lemma}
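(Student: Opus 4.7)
My plan for (vi) is classical. For necessity, if a pair $(A,B)$ exists then $[{\bf x}_i^{\tr}A^{-1}{\bf x}_j]_{i,j=1}^{r+1}$ is rank-one, hence nonalternate, so $A^{-1}$ is nonalternate. Writing $A^{-1}=PP^{\tr}$ with $P\in GL_n(\FF_2)$ and setting ${\bf y}_i:=P^{\tr}{\bf x}_i$, the ${\bf y}_i$ are linearly independent and their Gram matrix has rank $\le 1$, so Lemma~\ref{orthocode1} forces $r\le\lfloor(n+1)/2\rfloor$. For sufficiency I take $A=I_n$ and the vectors from the construction of Lemma~\ref{diameter_lemma1}\eqref{iii-diameter}, which realize $N:=[{\bf x}_i^{\tr}{\bf x}_j]_{i,j=1}^r=\bigl(\begin{smallmatrix}J_{2\times 2}&O\\O&O\end{smallmatrix}\bigr)$ of rank one. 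Writing $B-A=Y(I_r+J_r)Y^{\tr}$ with $Y=[{\bf x}_1,\ldots,{\bf x}_r]$, Lemma~\ref{l2} together with $(I_r+J_r)^2=I_r$ (valid for $r$ even) reduces the problem to $\det B=\det(I_r+J_r+N)$, and a Schur-complement computation through the invertible top-left block $I_2+J_2+J_{2\times 2}=I_2$ evaluates this to $\det(I_{r-2}+J_{r-2})=1$.

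For the necessity of (v), the case $(r,n)=(2,2)$ follows from direct enumeration: none of the four elements $A\in SGL_2(\FF_2)$ satisfies $A+\bigl(\begin{smallmatrix}0&1\\1&0\end{smallmatrix}\bigr)\in SGL_2(\FF_2)$, so no pair with $B-A$ alternate of rank two exists at all. For $(r,n)=(2,3)$, the casework in the proof of Theorem~\ref{thm-alter}\eqref{alter_i} for $r=2$ already shows that $B\in\sgl$ implies $\rank[{\bf x}_i^{\tr}A^{-1}{\bf x}_j]_{i,j=1}^{3}\le 1$; rank zero would demand a two-dimensional totally singular subspace of $\FF_2^3$ with respect to the form ${\bf x}\mapsto{\bf x}^{\tr}A^{-1}{\bf x}$, which is nondegenerate and nonalternate (since $n=3$ odd forces $A^{-1}$ nonalternate) and so has Witt index $(n-1)/2=1$, ruling the subspace out.

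For the sufficiency of (v), when $r\le\lfloor n/2\rfloor$ I take $A=I_n$ and ${\bf x}_k={\bf e}_{2k-1}+{\bf e}_{2k}$ for $k=1,\ldots,r$; these are linearly independent and mutually orthogonal isotropic, so $N=0$ (rank zero, hence $\ne 1$) and $\det B=\det(I_r+J_r)=1$ for $r$ even. When $r>\lfloor n/2\rfloor$, part~(vi) tells us that whenever $r>\lfloor(n+1)/2\rfloor$ every pair with $B-A$ alternate of rank $r$ is automatically of type \eqref{alter_i}; the only remaining boundary case is $r=\lfloor(n+1)/2\rfloor$ with $n\equiv 3\pmod 4$. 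In both situations I construct the pair by direct sum: for each even $r\ge 4$, decompose $r=4a+6b$ with $a,b\ge 0$ (possible since $\gcd(4,6)=2$), set $C_r:=\Omega_4^{\oplus a}\oplus\Omega_6^{\oplus b}$ where $\Omega_m$ is the tridiagonal adjacency matrix of the path $P_m$, and observe that the recurrence $f_m=xf_{m-1}+f_{m-2}$ in $\FF_2$ for $f_m(x):=\det(xI_m-\Omega_m)$ gives $f_4(1)=f_6(1)=1$, so $I_4+\Omega_4$ and $I_6+\Omega_6$ are invertible. Then $A=I_n$ and $B=(I_r+C_r)\oplus I_{n-r}$ lie in $\sgl$ with $B-A=C_r\oplus 0$ alternate of rank $r$, and at the $(r,r)$ level one has $N=Y^{\tr}Y$ with $Y\in GL_r(\FF_2)$ by Lemma~\ref{alternate-canonical}; nondegeneracy of the standard form on $\FF_2^r$ gives $\rank N=r\ge 4\ne 1$, and padding ${\bf x}_i$ with zeros to extend to larger $n$ leaves $N$ unchanged. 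The main obstacle is precisely the boundary case $r=\lfloor(n+1)/2\rfloor$ with $n\equiv 3\pmod 4$, where the rank-zero trick is unavailable and the direct-sum recipe must simultaneously deliver the existence of a pair and the guarantee that $\rank N\ne 1$.
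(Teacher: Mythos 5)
Your proof is correct and, for the most part, takes the same route as the paper: necessity in both parts via Lemma~\ref{orthocode1} after writing $A^{-1}=PP^{\tr}$, the pair $({\bf e}_1+{\bf e}_2,{\bf e}_3+{\bf e}_4)$ for $(v)$ with $r=2$, $n\ge 4$, and — remarkably — the identical direct-sum construction for $r\ge 4$ (the paper's blocks $C$ and $D$ are exactly your $\Omega_4$ and $\Omega_6$, with $r=4k=4k\cdot 1+6\cdot 0$ and $r=4k+2=4(k-1)+6$). Two of your verifications are genuinely different and arguably cleaner: for $(vi)$ you reuse the vectors of Lemma~\ref{diameter_lemma1}~\eqref{iii-diameter} and evaluate $\det(I_r+J_r+N)$ by a Schur complement, where the paper picks vectors whose Gram matrix is $({\bf e}_1+{\bf e}_{r+1})^2$; and for the rank condition in $(v)$ you observe that $\langle{\bf x}_1,\ldots,{\bf x}_r\rangle$ equals the column space of $C_r\oplus O$, so $N=Y_0^{\tr}Y_0$ with $Y_0\in GL_r(\FF_2)$ and $\rank N=r\ge 4$, whereas the paper exhibits an explicit $2\times 2$ nonsingular submatrix via Lemma~\ref{alternate-canonical}. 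Two small points. First, in the rank-zero construction the intermediate identity should be $\det B=\det(I_{r+1}+O)=1$ (or $\det(I_r+(I_r+J_r)\cdot O)=1$), not $\det(I_r+J_r)$; the conclusion is unaffected. Second, your closing sentence suggests that the case $r=\lfloor\frac{n+1}{2}\rfloor$ with $n\equiv 3\pmod 4$ remains an obstacle, but it does not: that case has $r=\frac{n+1}{2}\ge 4$ even (e.g.\ $n=7$, $r=4$), so your direct-sum recipe applies verbatim and delivers both $B\in\sgl$ and $\rank N=r\neq 1$; the preliminary discussion distinguishing $r\le\lfloor n/2\rfloor$ from $r>\lfloor n/2\rfloor$ is unnecessary once the direct-sum construction is in place for all even $4\le r\le n$.
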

\begin{proof}
\eqref{v-diameter}
Let $A=I_n$. If $r=2$ and $n\geq 4$, then it is easy to check that, for ${\bf x}_1={\bf e}_1+{\bf e}_2$, ${\bf x}_2={\bf e}_3+{\bf e}_4$, we have  $B=A+{\bf x}_1^2 + {\bf x}_2^2 +({\bf x}_1+{\bf x}_2)^2\in\sgl$ and $[{\bf x}_i^{\tr} A^{-1}{\bf x}_j]_{i,j=1}^{r+1}=O$. If $r=4k$ or $r=4k+2$ for some $k\geq 1$, then define
\begin{equation}\label{eq53}
B=A+\left(\begin{array}{ccccc}
C& & & & \\
&C & & & \\
& &\ddots & & \\
& & &C& \\
& & & & 0\end{array}\right)\quad \textrm{and}\quad B=A+\left(\begin{array}{cccccc}
C& & & & &\\
&C & & & & \\
& &\ddots & & &\\
& & &C& &\\
& & & & D &\\
& & & &  & 0\end{array}\right),
\end{equation}
respectively where
$$C=\left(\begin{array}{cccc}
0&1&0&0\\
1&0&1&0\\
0&1&0&1\\
0&0&1&0\end{array}\right)\quad \textrm{and}\quad D=\left(\begin{array}{cccccc}
0&1&0&0&0&0\\
1&0&1&0&0&0\\
0&1&0&1&0&0\\
0&0&1&0&1&0\\
0&0&0&1&0&1\\
0&0&0&0&1&0
\end{array}\right).$$ In~\eqref{eq53}, $C$ appears $k$ and $k-1$ times respectively. Clearly, $B\in \sgl$. Since $C={\bf e}_1\circ {\bf e}_2+{\bf e}_3\circ ({\bf e}_2+{\bf e}_4)$ and $D={\bf e}_1\circ {\bf e}_2+{\bf e}_3\circ ({\bf e}_2+{\bf e}_4)+{\bf e}_5\circ ({\bf e}_4+{\bf e}_6)$, we deduce that in both cases $B-A={\bf y}_1\circ {\bf y}_2+\cdots+{\bf y}_{r-1}\circ {\bf y}_{r}$ for some linearly independent vectors ${\bf y}_1,\ldots, {\bf y}_r\in\FF_2^n$ where ${\bf y}_1={\bf e}_1$, ${\bf y}_2={\bf e}_2$, ${\bf y}_3={\bf e}_3$, ${\bf y}_4={\bf e}_2+{\bf e}_4$. By Lemma~\ref{alternate-canonical}, $B-A={\bf x}_1^2+\cdots +{\bf x}_r^2+({\bf x}_1+\cdots+{\bf x}_r)^2$ where ${\bf x}_1={\bf e}_1$, ${\bf x}_2={\bf e}_2$, ${\bf x}_3={\bf e}_1+{\bf e}_2+{\bf e}_3$, ${\bf x}_4={\bf e}_1+{\bf e}_4$. Hence, ${\bf x}_1^{\tr}A^{-1}{\bf x}_1=1={\bf x}_1^{\tr}A^{-1}{\bf x}_4$ and ${\bf x}_4^{\tr}A^{-1}{\bf x}_4=0$, which means that $[{\bf x}_i^{\tr} A^{-1}{\bf x}_j]_{i,j=1}^{r+1}$ has rank at least two.

Conversely, suppose that $r=2$, $n\in \{2,3\}$,  $A\in \sgl$, and $B=A+{\bf x}_1^2+{\bf x}_2^2+({\bf x}_1+{\bf x}_2)^2\in\sgl$ where ${\bf x}_1,{\bf x}_2$ are linearly independent, and $[{\bf x}_i^{\tr} A^{-1}{\bf x}_j]_{i,j=1}^{r+1}$ is not of rank one. Let $\alpha={\bf x}_1^{\tr} A^{-1}{\bf x}_1$, $\beta={\bf x}_2^{\tr} A^{-1}{\bf x}_2$, $\gamma={\bf x}_1^{\tr} A^{-1}{\bf x}_2$. In characteristic two, Lemma~\ref{l2} implies that
\begin{align*}
1=\det B
&=\det(I_3+[{\bf x}_i^{\tr} A^{-1}{\bf x}_j]_{i,j=1}^{r+1})\\
&=\det \left(\begin{array}{ccc}
1+\alpha&\gamma&\alpha+\gamma\\
\gamma&1+\beta&\beta+\gamma\\
\alpha+\gamma&\beta+\gamma&1+\alpha+\beta
\end{array}\right)=1+\alpha\beta-\gamma^2.
\end{align*}
Hence, $\alpha\beta=\gamma^2$. By Remark~\ref{opomba1}, $\rank [{\bf x}_i^{\tr} A^{-1}{\bf x}_j]_{i,j=1}^{2}\neq 1$, i.e. $\alpha, \beta, \gamma$ are all zero. If $A^{-1}$ is not alternate, then $A^{-1}=PP^{\tr}$ for some $P\in GL_n(\FF_2)$, and $\langle P^{\tr}{\bf x}_1, P^{\tr}{\bf x}_2\rangle$ is a self-orthogonal code of length $n$ and dimension $2$. Hence, $n\geq 4$, a contradiction. If $A^{-1}$ is alternate, then $n=2$, $A^{-1}={\bf e}_1\circ {\bf e}_2$, and $\{{\bf x}_1,{\bf x}_2,{\bf x}_1+{\bf x}_2\}=\{{\bf e}_1,{\bf e}_2,{\bf e}_1+{\bf e}_2\}$. Consequently $B=O$, a contradiction.

\eqref{vi-diameter} If $0<r\leq \lfloor\frac{n+1}{2}\rfloor$ is even, then define $A=I_n$, $B=A+\sum_{i=1}^{r+1} {\bf x}_i^2$ where
$$
{\bf x}_1={\bf e}_1,\quad
{\bf x}_i={\bf e}_{2i-2}+{\bf e}_{2i-1}\ (i=2,\ldots,r),\quad
{\bf x}_{r+1}={\bf x}_1+\cdots+{\bf x}_r.$$
Then, $[{\bf x}_i^{\tr} A^{-1}{\bf x}_j]_{i,j=1}^{r+1}=({\bf e}_1+{\bf e}_{r+1})^2$ is of rank one and $A, B\in \sgl$.

Conversely, suppose that a pair $(A,B)$ of matrices in $\sgl$ satisfies the condition \eqref{alter_ii} in Theorem~\ref{thm-alter}. By Remark~\ref{opomba1}, $\rank \left([{\bf x}_i^{\tr} A^{-1}{\bf x}_j]_{i,j=1}^{r}\right)=1$. In particular, ${\bf x}_i^{\tr}A^{-1}{\bf x}_i=1$ for some $i$, which means that $A^{-1}$ is not alternate. Hence, there exists $P\in GL_n(\FF_2)$ such that $A^{-1}=PP^{\tr}$ and $\rank\left([{\bf y}_i^{\tr}{\bf y}_j]_{i,j=1}^r\right)=1$ for linearly independent vectors ${\bf y}_j:=P^{\tr} {\bf x}_j$. By Lemma~\ref{orthocode1}, $r\leq \lfloor\frac{n+1}{2}\big\rfloor$.
\end{proof}

Theorems~\ref{thm-nonalter}, \ref{thm-alter} and Lemmas~\ref{diameter_lemma1}, \ref{diameter_lemma2} imply Corollary~\ref{diameter}.
\begin{cor}\label{diameter}
The diameter of graph $\Gamma_n$ equals
$$\diam(\Gamma_n)=\left\{\begin{array}{ll}
2 & \textrm{if}\ n=2,\\
4 & \textrm{if}\ n=3,\\
n+1 & \textrm{if}\ n\geq 4\ \textrm{is even},\\
n & \textrm{if}\ n\geq 5\ \textrm{is odd}.
\end{array}\right.$$
\end{cor}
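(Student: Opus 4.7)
The plan is to combine Theorems~\ref{thm-nonalter} and~\ref{thm-alter} with the achievability Lemmas~\ref{diameter_lemma1} and~\ref{diameter_lemma2}. Theorems~\ref{thm-nonalter} and~\ref{thm-alter} together show that whenever $A,B\in\sgl$ with $r:=\rank(A-B)\geq 1$, we have $d(A,B)\in\{r,r+1,r+2\}$. Consequently the diameter equals the maximum over $r\in\{1,\ldots,n\}$ of the largest ``additive offset'' (either $0$, $1$, or $2$) that is actually realized by some pair. It therefore suffices to determine, for each possible offset, the largest $r$ for which a witnessing pair in $\sgl$ exists, and then take the maximum across offsets.

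Inspecting the achievability results: offset $+2$ can occur (via Theorem~\ref{thm-nonalter}\eqref{nonalter_i} with all entries equal to $1$, via Theorem~\ref{thm-nonalter}\eqref{nonalter_ii}, or via Theorem~\ref{thm-alter}\eqref{alter_ii}) precisely for some $r\in\{2,\ldots,\lfloor\frac{n+1}{2}\rfloor\}$ by Lemma~\ref{diameter_lemma1}\eqref{i-diameter}, \eqref{iii-diameter} and Lemma~\ref{diameter_lemma2}\eqref{vi-diameter}; the largest resulting distance is thus $\lfloor\frac{n+1}{2}\rfloor+2=\lfloor\frac{n+5}{2}\rfloor$. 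Offset $+1$ can occur (via Theorem~\ref{thm-nonalter}\eqref{nonalter_i} with some off-diagonal $0$ or via Theorem~\ref{thm-alter}\eqref{alter_i}) only for even $r$, excluding $r=2$ when $n\leq 3$, by Lemma~\ref{diameter_lemma1}\eqref{ii-diameter} and Lemma~\ref{diameter_lemma2}\eqref{v-diameter}; so the largest realized value is $n+1$ for even $n\geq 4$, $n$ for odd $n\geq 5$, and there is no valid $r$ at all for $n\in\{2,3\}$. Offset $0$ (i.e.\ $d(A,B)=r$) is realized for every $r\in\{1,\ldots,n\}$ by Lemma~\ref{diameter_lemma1}\eqref{iv-diameter}, giving the value $n$.

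Taking the maximum of these three candidates in each case yields the stated formula: for $n=2$ neither $r+1$ nor $r+2$ is achievable, leaving $\diam(\Gamma_2)=2$; for $n=3$ only $r+2$ with $r=2$ beats $r=n=3$, giving $4$; for even $n\geq 4$ the dominant value is $n+1$ (coming from the $r+1$ case with $r=n$) since $\lfloor\frac{n+5}{2}\rfloor\leq n+1$; for odd $n\geq 5$ all three candidates are bounded by $n$ and $n$ is attained. The only real bookkeeping hazard (the ``main obstacle,'' such as it is) is making sure the small cases $n=2,3$ are handled correctly, since there Lemmas~\ref{diameter_lemma1}\eqref{ii-diameter} and~\ref{diameter_lemma2}\eqref{v-diameter} exclude exactly the configurations that would otherwise push the diameter higher; once these exclusions are read off, the corollary follows by direct comparison.
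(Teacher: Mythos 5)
Your proposal is correct and follows exactly the route the paper intends: the paper gives no written proof of the corollary, stating only that Theorems~\ref{thm-nonalter}, \ref{thm-alter} and Lemmas~\ref{diameter_lemma1}, \ref{diameter_lemma2} imply it, and your case-by-case maximization over the realizable offsets $0$, $+1$, $+2$ is precisely the bookkeeping those results are designed to support. The small-case exclusions for $n\in\{2,3\}$ and the comparison $\lfloor\frac{n+1}{2}\rfloor+2$ versus $n$ or $n+1$ are all handled correctly.
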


\section{Binary self-dual codes and $\Gamma_n$}
\label{s8}
In this section, $n\geq 3$ is odd. Recall that, given ${\bf x}\in\FF_2^n$, we defined
$$\overline{{\bf x}}=\left(\begin{array}{c}{\bf x}\\{\bf x}^{\tr}{\bf x}\end{array}\right)\in \FF_2^{n+1}.$$
For ${\bf y}\in\FF_2^{n+1}$, let $\underline{{\bf y}}\in \FF_2^{n}$ be obtained from ${\bf y}$ by deleting the last entry. Then,
$$\overline{{\bf x}_1+{\bf x}_2}=\overline{{\bf x}}_1+\overline{{\bf x}}_2,\qquad\qquad \underline{{\bf y}_1+{\bf y}_2}=\underline{{\bf y}_1}+\underline{{\bf y}_2},$$
$\overline{{\bf x}}^{\tr} \overline{{\bf x}}=0$, and $\dim \langle \overline{{\bf x}}_1,\ldots, \overline{{\bf x}}_r\rangle=\dim \langle {\bf x}_1,\ldots, {\bf x}_r\rangle$ for all column vectors. Moreover, ${\bf y}=\overline{(\underline{{\bf y}})}$ whenever ${\bf y}^{\tr}{\bf y}=0$. Hence, if self-dual codes $C=\langle {\bf y}_1, \ldots , {\bf y}_{\frac{n+1}{2}}\rangle$ and $D=\langle {\bf z}_1, \ldots , {\bf z}_{\frac{n+1}{2}}\rangle$ satisfy $\langle \underline{{\bf y}_1}, \ldots , \underline{{\bf y}_{\frac{n+1}{2}}}\rangle=\langle \underline{{\bf z}_1}, \ldots , \underline{{\bf z}_{\frac{n+1}{2}}}\rangle$, then $C=D$.

Let ${\cal SD}_n\subseteq \sgl$ be the subset of all matrices $A$ such that
\begin{equation*}
d(A,I_n)=\frac{n+5}{2}\quad \textrm{and}\quad \rank(A-I_n)=\frac{n+1}{2}.
\end{equation*}
If $A\in {\cal SD}_n$, then Theorems~\ref{thm-nonalter}, \ref{thm-alter}, and Remark~\ref{opomba1} imply that $A-I_n$ is nonalternate and $A-I_n=\sum_{i=1}^{\frac{n+1}{2}} {\bf x}_i^2$ or $A-I_n$ is alternate, $\frac{n+1}{2}$ is even, and $A-I_n=\sum_{i=1}^{\frac{n+1}{2}} {\bf x}_i^2 +
\left(\sum_{i=1}^{\frac{n+1}{2}} {\bf x}_i\right)^2$,
for some linearly independent ${\bf x}_1,\ldots,{\bf x}_{\frac{n+1}{2}}\in\FF_2^n$ such that $[{\bf x}_i^{\tr} {\bf x}_j]_{i,j=1}^{\frac{n+1}{2}}$ is of rank one. In the nonalternate case, $\Tr\left([{\bf x}_i^{\tr} {\bf x}_j]_{i,j=1}^{\frac{n+1}{2}}\right)=0$.
\begin{prop}\label{selfdual}
Each $A\in {\cal SD}_n$ determines a unique self-dual code $C=\langle\overline{{\bf x}}_1,\ldots,\overline{{\bf x}}_{\frac{n+1}{2}}\rangle$ in $\FF_2^{n+1}$ where ${\bf x}_1,\ldots,{\bf x}_{\frac{n+1}{2}}$ are as in the previous paragraph.
\end{prop}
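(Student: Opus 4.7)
The proof will proceed in two parts. First I will check that $C$ is genuinely self-dual, and then that it does not depend on which decomposition of $A-I_n$ was used; the second part is the content of the word ``unique'' in the statement.

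For the first part, the discussion in the paragraph immediately preceding the proposition already identifies which cases of Theorems~\ref{thm-nonalter} and~\ref{thm-alter} must apply in order to have $d(A,I_n)=\frac{n+5}{2}$ together with $\rank(A-I_n)=\frac{n+1}{2}$. Combining this with Remark~\ref{opomba1}, the Gram matrix $[{\bf x}_i^{\tr}{\bf x}_j]_{i,j=1}^{(n+1)/2}$ has rank one (since ${\bf x}_i^{\tr} I_n^{-1}{\bf x}_j={\bf x}_i^{\tr}{\bf x}_j$) in both the alternate and nonalternate branches. Lemma~\ref{orthocode1} then immediately gives that $C=\langle \overline{{\bf x}}_1,\ldots,\overline{{\bf x}}_{(n+1)/2}\rangle$ is a self-orthogonal code in $\FF_2^{n+1}$. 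The vectors ${\bf x}_1,\ldots,{\bf x}_{(n+1)/2}$ are linearly independent, and since $\underline{\overline{{\bf x}}_i}={\bf x}_i$, the vectors $\overline{{\bf x}}_1,\ldots,\overline{{\bf x}}_{(n+1)/2}$ are linearly independent as well. Hence $\dim C=\frac{n+1}{2}$, which is exactly half of the code length $n+1$. A self-orthogonal code of half-length dimension is self-dual, so $C=C^{\bot}$.

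For the second part, suppose that $A-I_n$ admits another decomposition fitting the description preceding the proposition, involving linearly independent vectors ${\bf y}_1,\ldots,{\bf y}_{(n+1)/2}$. Whether $A-I_n$ is alternate is a property of the matrix itself and does not depend on the decomposition, so the two decompositions are of the same type. In the nonalternate case, $\sum_{i=1}^{(n+1)/2}{\bf x}_i^2=A-I_n=\sum_{i=1}^{(n+1)/2}{\bf y}_i^2$, and Lemma~\ref{lema-pomozna} yields
\begin{equation*}
\langle {\bf x}_1,\ldots,{\bf x}_{(n+1)/2}\rangle=\langle {\bf y}_1,\ldots,{\bf y}_{(n+1)/2}\rangle.
\end{equation*}
In the alternate case, $\frac{n+1}{2}$ is even and the analogous equality
\begin{equation*}
\sum_{i=1}^{(n+1)/2}{\bf x}_i^2+\Big(\sum_{i=1}^{(n+1)/2}{\bf x}_i\Big)^2=\sum_{i=1}^{(n+1)/2}{\bf y}_i^2+\Big(\sum_{i=1}^{(n+1)/2}{\bf y}_i\Big)^2
\end{equation*}
likewise invokes the second case of Lemma~\ref{lema-pomozna} to give the same conclusion. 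Since ${\bf x}\mapsto \overline{{\bf x}}$ is $\FF_2$-linear, this common subspace is sent to a single subspace of $\FF_2^{n+1}$, so $\langle \overline{{\bf x}}_1,\ldots,\overline{{\bf x}}_{(n+1)/2}\rangle=\langle \overline{{\bf y}}_1,\ldots,\overline{{\bf y}}_{(n+1)/2}\rangle$ and $C$ is well-defined.

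There is no serious obstacle: the argument is essentially a repackaging of Lemma~\ref{orthocode1} for self-orthogonality, a dimension count for self-duality, and Lemma~\ref{lema-pomozna} for uniqueness. The only care needed is in pulling together the preceding paragraph to confirm that the rank-one property of $[{\bf x}_i^{\tr}{\bf x}_j]$ holds in every subcase permitted by the definition of $\mathcal{SD}_n$, including the overlapping all-ones subcase of Theorem~\ref{thm-nonalter}~\eqref{nonalter_i} noted in Remark~\ref{opomba2}.
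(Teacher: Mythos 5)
Your proof is correct and follows essentially the same route as the paper's: Lemma~\ref{orthocode1} for self-orthogonality, the dimension count $\dim C=\frac{n+1}{2}$ for self-duality, and Lemma~\ref{lema-pomozna} for uniqueness. The extra care you take in separating the alternate and nonalternate branches and in passing from equality of the spans $\langle{\bf x}_i\rangle=\langle{\bf y}_i\rangle$ to equality of the codes via linearity of ${\bf x}\mapsto\overline{{\bf x}}$ is implicit in the paper's shorter argument but adds nothing new.
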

\begin{proof}
By Lemma~\ref{orthocode1}, the code $C$ is self-orthogonal. Since $$\dim \langle \overline{{\bf x}}_1,\ldots, \overline{{\bf x}}_{\frac{n+1}{2}}\rangle=\dim \langle {\bf x}_1,\ldots, {\bf x}_{\frac{n+1}{2}}\rangle=\frac{n+1}{2},$$ $C$ is self-dual. By Lemma~\ref{lema-pomozna}, $C$ is uniquely determined by $A$.
\end{proof}

Conversely, assume now that $C$ is any self-dual code in $\FF_2^{n+1}$. Let $\mathfrak{B}_C$ be the set of all its bases. Then $\mathfrak{B}_C=\mathfrak{B}_C^1\cup \mathfrak{B}_C^2$ where $\mathfrak{B}_C^1$ consists of bases having an odd number of member-vectors with the last entry 1. Similarly, bases in $\mathfrak{B}_C^2$ have an even number of vectors with the last entry 1. Given a basis ${\cal B}=\{{\bf y}_1,\ldots,{\bf y}_{\frac{n+1}{2}}\}\in \mathfrak{B}_C$, consider the matrices
\begin{align}
\label{eq57} A'_{{\cal B}}:=&I_n+\sum_{i=1}^{\frac{n+1}{2}} \underline{{\bf y}_i}^2,\\
\label{eq58}A''_{{\cal B}}:=&I_n+\sum_{i=1}^{\frac{n+1}{2}} \underline{{\bf y}_i}^2+(\underline{{\bf y}_1}+\cdots+ \underline{{\bf y}_{\frac{n+1}{2}}})^2.
\end{align}
From the proof of Theorem~\ref{particija} we will be able to observe that
\begin{equation}\label{eq59}
A'_{{\cal B}}\ \textrm{is invertible}\ \Longleftrightarrow \ {\cal B}\in \mathfrak{B}_C^2
\end{equation}
whereas $A''_{{\cal B}}$ is always invertible. Nevertheless, $A''_{{\cal B}}$ turns out to be `relevant' for even $\frac{n+1}{2}$ only. To each self-dual code $C$ in $\FF_2^{n+1}$ we associate the set
$${\cal F}_C:=\left\{\begin{array}{ll} \{A'_{{\cal B}} : {\cal B}\in \mathfrak{B}_C^2\}\cup \{A''_{{\cal B}} : {\cal B}\in \mathfrak{B}_C\}& \textrm{if}\ \frac{n+1}{2}\ \textrm{is even},\\
\{A'_{{\cal B}} : {\cal B}\in \mathfrak{B}_C^2\}& \textrm{if}\ \frac{n+1}{2}\ \textrm{is odd}.\end{array}\right.$$
\begin{remark}
For $n\geq 7$, there exist distinct bases ${\cal B}$ and $\widehat{{\cal B}}$ of a self-dual code $C$ such that $A'_{{\cal B}}=A'_{\widehat{{\cal B}}}$ and $A''_{{\cal B}}=A''_{\widehat{{\cal B}}}$. For example, we can obtain such $\widehat{{\cal B}}$ from ${\cal B}$ by replacing vectors ${\bf y}_1,{\bf y}_2,{\bf y}_3,{\bf y}_4$ with ${\bf y}_1+{\bf y}_2 + {\bf y}_3 + {\bf y}_4 + {\bf y}_i$ $(i=1,2,3,4)$. Moreover, for arbitrary $n\geq 3$, $A''_{{\cal B}}=A''_{\widehat{\widehat{{\cal B}}}}$ whenever $\widehat{\widehat{{\cal B}}}$ is obtained from ${\cal B}$ by replacing one of the vectors ${\bf y}_1,\ldots,{\bf y}_{\frac{n+1}{2}}$ in ${\cal B}$ with ${\bf y}_1+\cdots+ {\bf y}_{\frac{n+1}{2}}$.
\end{remark}

\begin{thm}\label{particija}
$\{{\cal F}_C : C\ \textrm{is a self-dual code in}\ \FF_2^{n+1}\}$ is a partition of ${\cal SD}_n$.
\end{thm}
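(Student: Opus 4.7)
The plan is to prove three items: (a) ${\cal F}_C\subseteq {\cal SD}_n$ for each self-dual code $C\subseteq \FF_2^{n+1}$; (b) every $A\in {\cal SD}_n$ lies in some ${\cal F}_C$; (c) the families ${\cal F}_C$ are pairwise disjoint. Together these give the partition. Before tackling (a)--(c), I would extract one preliminary fact: for any basis ${\cal B}=\{{\bf y}_1,\ldots,{\bf y}_{(n+1)/2}\}$ of a self-dual code $C$, the vectors $\underline{{\bf y}_1},\ldots,\underline{{\bf y}_{(n+1)/2}}$ are linearly independent, $\sum_i \underline{{\bf y}_i}\neq 0$, and the last-entry vector ${\bf v}:=(({\bf y}_1)_{n+1},\ldots,({\bf y}_{(n+1)/2})_{n+1})^{\tr}$ is nonzero. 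Each of these reduces to ${\bf e}_{n+1}\notin C$, which holds because ${\bf e}_{n+1}^{\tr}{\bf e}_{n+1}=1\neq 0$ while $C$ is self-orthogonal. Self-orthogonality also provides the Gram identity $[\underline{{\bf y}_i}^{\tr}\underline{{\bf y}_j}]_{i,j}={\bf v}{\bf v}^{\tr}$, which drives all later computations.

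For (a), Lemma~\ref{l2} gives $\det A'_{\cal B}=\det(I+{\bf v}{\bf v}^{\tr})=1+\sum_i ({\bf y}_i)_{n+1}$, establishing~\eqref{eq59} and yielding invertibility of $A'_{\cal B}$ exactly when ${\cal B}\in \mathfrak{B}_C^2$; an analogous computation with one extra column shows $\det A''_{\cal B}=1$ unconditionally. The matrix $A'_{\cal B}-I_n=\sum \underline{{\bf y}_i}^2$ then has rank $(n+1)/2$, is nonalternate (its diagonal is $\sum \underline{{\bf y}_i}\neq 0$), and has Gram matrix ${\bf v}{\bf v}^{\tr}\in \ronetzero$ thanks to the trace condition from ${\cal B}\in \mathfrak{B}_C^2$; Theorem~\ref{thm-nonalter}~\eqref{nonalter_ii} then gives $d(A'_{\cal B},I_n)=\frac{n+5}{2}$. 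When $(n+1)/2$ is even, Lemma~\ref{alternate-canonical} identifies $A''_{\cal B}-I_n$ as alternate of rank $(n+1)/2$, and the extended Gram matrix $\tilde{{\bf v}}\tilde{{\bf v}}^{\tr}$ with $\tilde{{\bf v}}=({\bf v}^{\tr},\sum_i v_i)^{\tr}$ is rank one because ${\bf v}\neq 0$, so Theorem~\ref{thm-alter}~\eqref{alter_ii} yields the same distance for $A''_{\cal B}$.

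For (b), given $A\in {\cal SD}_n$, the discussion preceding Proposition~\ref{selfdual} furnishes either a nonalternate decomposition $A-I_n=\sum_{i=1}^{(n+1)/2}{\bf x}_i^2$ with $[{\bf x}_i^{\tr}{\bf x}_j]\in \ronetzero$ or, when $(n+1)/2$ is even, an alternate decomposition $A-I_n=\sum {\bf x}_i^2+(\sum {\bf x}_i)^2$ with $[{\bf x}_i^{\tr}{\bf x}_j]$ of rank one. Setting ${\bf y}_i:=\overline{{\bf x}}_i$ produces a basis ${\cal B}$ of a self-dual code $C$ by Proposition~\ref{selfdual}. In the nonalternate case, the trace-zero condition on $[{\bf x}_i^{\tr}{\bf x}_j]$ says $\sum_i ({\bf x}_i^{\tr}{\bf x}_i)=\sum_i ({\bf y}_i)_{n+1}=0$, placing ${\cal B}\in \mathfrak{B}_C^2$; substituting $\underline{{\bf y}_i}={\bf x}_i$ then yields $A'_{\cal B}=A$. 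In the alternate case the same substitution yields $A''_{\cal B}=A$. Either way $A\in {\cal F}_C$.

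For (c), I would first observe that $A'_{\cal B}-I_n$ is always nonalternate, while $A''_{\cal B}-I_n$ is always alternate (its diagonal cancels), so whether $A-I_n$ is alternate decides which of the two types $A$ takes. If $A\in {\cal F}_C\cap {\cal F}_D$ arises from matching-type bases ${\cal B}=\{{\bf y}_i\}\subseteq C$ and ${\cal B}'=\{{\bf y}'_i\}\subseteq D$, Lemma~\ref{lema-pomozna} forces $\langle \underline{{\bf y}_i}\rangle=\langle \underline{{\bf y}'_i}\rangle$; applying $\overline{\cdot}$ termwise, and using $\overline{\underline{{\bf z}}}={\bf z}$ for any even-weight ${\bf z}$ (which is every member of a self-dual code), lifts this to $C=\langle {\bf y}_i\rangle=\langle {\bf y}'_i\rangle=D$. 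I expect the main obstacle to be not any single deep point but the careful parity bookkeeping: matching the two sub-definitions of ${\cal F}_C$ against $(n+1)/2\bmod 2$, tracking $\mathfrak{B}_C^2$ versus $\mathfrak{B}_C$, and verifying that the rank-one hypothesis in Theorem~\ref{thm-alter}~\eqref{alter_ii} for the $(r+1)\times(r+1)$ Gram matrix is correctly derived from the $r\times r$ one via Remark~\ref{opomba1}.
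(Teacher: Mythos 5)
Your proposal is correct and follows essentially the same route as the paper: membership of every element of ${\cal SD}_n$ in some ${\cal F}_C$ via the discussion preceding Proposition~\ref{selfdual}, disjointness via the uniqueness coming from Lemma~\ref{lema-pomozna} (with the alternate/nonalternate dichotomy ruling out mixed types), and the inclusion ${\cal F}_C\subseteq{\cal SD}_n$ via the determinant and Gram-matrix computations feeding into Theorems~\ref{thm-nonalter}~\eqref{nonalter_ii} and~\ref{thm-alter}~\eqref{alter_ii}. Your packaging of the Gram matrix as ${\bf v}{\bf v}^{\tr}$ with ${\bf v}$ the vector of last entries is a slightly tidier way of writing the paper's case analysis in~\eqref{eq60}, but the content is identical.
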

\begin{proof}
As explained in Proposition~\ref{selfdual} and in the paragraph above it, each matrix $A\in {\cal SD}_n$ is of the form $A=A'_{{\cal B}}$ or $A=A''_{{\cal B}}$ where ${\cal B}=\{\overline{{\bf x}}_1,\ldots,\overline{{\bf x}}_{\frac{n+1}{2}}\}$ is a basis of a self-dual code $C=\langle\overline{{\bf x}}_1,\ldots,\overline{{\bf x}}_{\frac{n+1}{2}}\rangle$. Moreover, in the second case $\frac{n+1}{2}$ is even, while in the first case $0=\Tr\left([{\bf x}_i^{\tr} {\bf x}_j]_{i,j=1}^{\frac{n+1}{2}}\right)=\sum_{i=1}^{\frac{n+1}{2}} {\bf x}_i^{\tr} {\bf x}_i$ meaning that ${\bf x}_i^{\tr} {\bf x}_i=1$ for an even number of indices $i\in\{1,\ldots,\frac{n+1}{2}\}$. Therefore, ${\cal B}\in \mathfrak{B}_C^2$ in the case $A=A'_{{\cal B}}$. Hence, $A\in {\cal F}_C$.

The uniqueness part of Proposition~\ref{selfdual} implies that ${\cal F}_C\cap {\cal F}_{\widetilde{C}}=\emptyset$ whenever the self-dual codes $C$ and $\widetilde{C}$ are distinct. It remains to prove that ${\cal F}_C\subseteq {\cal SD}_n$ for each self-dual code $C$ in $\FF_2^{n+1}$.

Let ${\cal B}=\{{\bf y}_1,\ldots,{\bf y}_{\frac{n+1}{2}}\}\in \mathfrak{B}_C$ for some self-dual code $C$. By Lemma~\ref{l2},
\begin{align}
\label{eq55}\det A'_{{\cal B}}&=\det\left(I_{\frac{n+1}{2}}+[\underline{{\bf y}_i}^{\tr} \underline{{\bf y}_j}]_{i,j=1}^{\frac{n+1}{2}}\right),\\
\label{eq56}\det A''_{{\cal B}}&=\det\left(I_{\frac{n+3}{2}}+[\underline{{\bf y}_i}^{\tr} \underline{{\bf y}_j}]_{i,j=1}^{\frac{n+3}{2}}\right)
\end{align}
where $\underline{{\bf y}_{\frac{n+3}{2}}}=\underline{{\bf y}_{1}}+\cdots +\underline{{\bf y}_{\frac{n+1}{2}}}$. Since ${\bf y}_i^{\tr}{\bf y}_j=0$ for $i,j\leq \frac{n+1}{2}$, it follows that
\begin{align}
\label{eq60}\underline{{\bf y}_i}^{\tr} \underline{{\bf y}_j}&=\left\{\begin{array}{ll}1 & \textrm{if the last entries of}\ {\bf y}_i, {\bf y}_j\ \textrm{are both}\ 1,\\0&\textrm{otherwise},\end{array}\right.\\
\nonumber\underline{{\bf y}_{\frac{n+3}{2}}}^{\tr} \underline{{\bf y}_{\frac{n+3}{2}}}&=\left\{\begin{array}{ll}1 & \textrm{if}\ {\cal B}\in \mathfrak{B}_C^1,\\0&\textrm{if}\ {\cal B}\in \mathfrak{B}_C^2,\end{array}\right.\\
\nonumber\underline{{\bf y}_{\frac{n+3}{2}}}^{\tr} \underline{{\bf y}_i}=\underline{{\bf y}_i}^{\tr} \underline{{\bf y}_{\frac{n+3}{2}}}&=\left\{\begin{array}{ll}1 & \textrm{if}\ {\cal B}\in \mathfrak{B}_C^1\ \textrm{and the last entry of}\ {\bf y}_i\ \textrm{is}\ 1,\\0&\textrm{otherwise}.\end{array}\right.
\end{align}
Observe that at least one vector in ${\cal B}$ has the last entry equal to 1 because the opposite would imply that ${\bf e}_{n+1}\in C^{\bot}$, which contradicts the self-duality and the fact that ${\bf e}_{n+1}^{\tr}{\bf e}_{n+1}=1$. Hence, matrices $[\underline{{\bf y}_i}^{\tr} \underline{{\bf y}_j}]_{i,j=1}^{\frac{n+1}{2}}$ and $[\underline{{\bf y}_i}^{\tr} \underline{{\bf y}_j}]_{i,j=1}^{\frac{n+3}{2}}$ are both of rank one. Moreover, $\Tr\left([\underline{{\bf y}_i}^{\tr} \underline{{\bf y}_j}]_{i,j=1}^{\frac{n+3}{2}}\right)=0$, whereas $\Tr\left([\underline{{\bf y}_i}^{\tr} \underline{{\bf y}_j}]_{i,j=1}^{\frac{n+1}{2}}\right)=0$ if and only if ${\cal B}\in \mathfrak{B}_C^2$. By Proposition~\ref{prop0} and~\eqref{eq55}-\eqref{eq56}, $A''_{{\cal B}}\in \sgl$ while $A'_{{\cal B}}\in\sgl$ whenever ${\cal B}\in \mathfrak{B}_C^2$. Since ${\bf y}_1,\ldots,{\bf y}_{\frac{n+1}{2}}$ are linearly independent, it follows that $\rank(A'_{{\cal B}}-I_n)=\frac{n+1}{2}$. On the other hand, $$A''_{{\cal B}}-I_n=P\left(\begin{array}{cc}I_{\frac{n+1}{2}}+J_{\frac{n+1}{2}}&O\\
O&O\end{array}\right)P^{\tr}$$ for any $P\in GL_{n}(\FF_2)$ that has $\underline{{\bf y}_1},\ldots,\underline{{\bf y}_{\frac{n+1}{2}}}$ as the first $\frac{n+1}{2}$ columns. Since $\det\left(I_{\frac{n+1}{2}}+J_{\frac{n+1}{2}}\right)=1+{\bf j}_{\frac{n+1}{2}}^{\tr}{\bf j}_{\frac{n+1}{2}}$ by Lemma~\ref{l2}, it follows that  $\rank(A''_{{\cal B}}-I_n)=\frac{n+1}{2}$ whenever $\frac{n+1}{2}$ is even. It now follows from Theorems~\ref{thm-nonalter} and~\ref{thm-alter} that
$A'_{{\cal B}},A''_{{\cal B}}\in{\cal SD}_n$ whenever $A'_{{\cal B}},A''_{{\cal B}}\in {\cal F}_C$.
\end{proof}

\begin{exa}\label{example}
It is well-known (cf.~\cite[Theorem~9.5.1]{plessknjiga}) that there exist three self-dual codes in $\FF_2^4$, namely $C_1=\langle {\bf e}_1+{\bf e}_2, {\bf j}_4\rangle$, $C_2=\langle {\bf e}_1+{\bf e}_3, {\bf j}_4\rangle$, $C_3=\langle {\bf e}_1+{\bf e}_4, {\bf j}_4\rangle$. Their families are
\begin{align*}
{\cal F}_{C_1}&=\left\{\left(\begin{array}{ccc}0&1&1\\
1&0&1\\
1&1&1\end{array}\right), \left(\begin{array}{ccc}1&0&1\\
0&1&1\\
1&1&1\end{array}\right)\right\},\\
{\cal F}_{C_2}&=\left\{\left(\begin{array}{ccc}0&1&1\\
1&1&1\\
1&1&0\end{array}\right), \left(\begin{array}{ccc}1&1&0\\
1&1&1\\
0&1&1\end{array}\right)\right\},\\
{\cal F}_{C_3}&=\left\{\left(\begin{array}{ccc}1&1&1\\
1&0&1\\
1&1&0\end{array}\right), \left(\begin{array}{ccc}1&1&1\\
1&1&0\\
1&0&1\end{array}\right)\right\}.
\end{align*}
The set ${\cal SD}_3=\bigcup_{i=1}^3{\cal F}_{C_i}$ can be determined also with the help of Figure~\ref{slika}.
\end{exa}

In view of Theorem~\ref{particija} and the identification $C \leftrightarrow {\cal F}_C$, Theorem~\ref{thm-distances} says that self-dual codes are detected by graph parameters $d(A,I_n)$ and $d_{\widehat{\Gamma}_n}(A,I_n)$.
\begin{thm}\label{thm-distances} If $\tfrac{n+1}{2}$ is odd, then
\begin{equation}\label{eq69}
{\cal SD}_n=\left\{A\in\sgl\ :\ d(A,I_n)=\tfrac{n+5}{2},\ d_{\widehat{\Gamma}_n}(A,I_n)=\tfrac{n+1}{2}\right\}.
\end{equation}
If $\frac{n+1}{2}$ is even, then
\begin{equation}\label{eq70}
{\cal SD}_n=\left\{A\in\sgl\ :\ d(A,I_n)=\tfrac{n+5}{2},\ d_{\widehat{\Gamma}_n}(A,I_n)\in\{\tfrac{n+1}{2},\tfrac{n+3}{2}\}\right\}.
\end{equation}
\end{thm}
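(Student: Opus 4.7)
The strategy is to translate the rank condition $\rank(A-I_n)=\tfrac{n+1}{2}$ built into the definition of ${\cal SD}_n$ into a constraint on $d_{\widehat{\Gamma}_n}(A,I_n)$. For this I would exploit the identity, recalled in Section~\ref{s2}, that $d_{\widehat{\Gamma}_n}(A,I_n)=\rank(A-I_n)$ when $A-I_n$ is nonalternate or zero, while $d_{\widehat{\Gamma}_n}(A,I_n)=\rank(A-I_n)+1$ when $A-I_n$ is a nonzero alternate matrix. Since alternate matrices have even rank, a rank-$\tfrac{n+1}{2}$ alternate $A-I_n$ can only exist when $\tfrac{n+1}{2}$ is even; this parity dichotomy is precisely what separates \eqref{eq69} from \eqref{eq70}.

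For the inclusion ${\cal SD}_n\subseteq$ (right-hand side), the plan is to invoke the structural description of the members of ${\cal SD}_n$ recorded in the paragraph immediately preceding Proposition~\ref{selfdual}: $A-I_n$ is either nonalternate of rank $\tfrac{n+1}{2}$, whence $d_{\widehat{\Gamma}_n}(A,I_n)=\tfrac{n+1}{2}$, or alternate of rank $\tfrac{n+1}{2}$ with $\tfrac{n+1}{2}$ necessarily even, whence $d_{\widehat{\Gamma}_n}(A,I_n)=\tfrac{n+3}{2}$. In the odd-$\tfrac{n+1}{2}$ setting only the first subcase can occur, matching \eqref{eq69}; in the even-$\tfrac{n+1}{2}$ setting both subcases occur, matching \eqref{eq70}.

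For the converse, I would fix $A$ satisfying the right-hand side of \eqref{eq69} or \eqref{eq70}, set $r=\rank(A-I_n)$, and argue that $r=\tfrac{n+1}{2}$ by splitting on whether $A-I_n$ is alternate and enumerating the possibilities for $r$. In the nonalternate case $r=d_{\widehat{\Gamma}_n}(A,I_n)$, so $r=\tfrac{n+1}{2}$ is immediate in the odd subcase, whereas in the even subcase one must exclude $r=\tfrac{n+3}{2}$; such $r$ is odd, so Lemma~\ref{lemma3} rules out Theorem~\ref{thm-nonalter}~\eqref{nonalter_i}, while Lemma~\ref{orthocode1} applied to the rank-one matrix in Theorem~\ref{thm-nonalter}~\eqref{nonalter_ii} forces $r\leq\tfrac{n+1}{2}$ and thereby rules that case out as well, and Theorem~\ref{thm-nonalter}~\eqref{nonalter_iii} would produce the wrong distance $d(A,I_n)=r=\tfrac{n+3}{2}\neq\tfrac{n+5}{2}$. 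In the alternate case $r=d_{\widehat{\Gamma}_n}(A,I_n)-1$ is even; in the odd subcase $r=\tfrac{n-1}{2}$ and Theorem~\ref{thm-alter} bounds $d(A,I_n)\leq r+2=\tfrac{n+3}{2}$, contradicting $d(A,I_n)=\tfrac{n+5}{2}$, while in the even subcase $r=\tfrac{n-1}{2}$ is odd and therefore excluded, leaving only $r=\tfrac{n+1}{2}$.

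The main obstacle is the nonalternate subcase $r=\tfrac{n+3}{2}$ when $\tfrac{n+1}{2}$ is even. Here $d_{\widehat{\Gamma}_n}(A,I_n)=\tfrac{n+3}{2}$ lies in the admissible set of \eqref{eq70}, so this configuration cannot be excluded using the two distance parameters alone; its elimination requires combining a parity observation via Lemma~\ref{lemma3} with the self-orthogonal dimension bound of Lemma~\ref{orthocode1} applied through the three options of Theorem~\ref{thm-nonalter}.
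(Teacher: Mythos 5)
Your proposal is correct and follows essentially the same route as the paper: the forward inclusion from the structural description of ${\cal SD}_n$ plus the formula for $d_{\widehat{\Gamma}_n}$, and for the converse the same case analysis whose only delicate point is excluding a nonalternate $A-I_n$ of rank $\tfrac{n+3}{2}$ when $\tfrac{n+1}{2}$ is even, which both you and the paper resolve by noting that $d(A,I_n)=r+1$ forces case~\eqref{nonalter_i} of Theorem~\ref{thm-nonalter} and then invoking the odd-rank parity obstruction (Lemma~\ref{lemma3}, packaged in the paper as Lemma~\ref{diameter_lemma1}~\eqref{ii-diameter}). The paper's Case with $d_{\widehat{\Gamma}_n}(A,I_n)=\tfrac{n+1}{2}$ is dispatched slightly more compactly via the inequality $\tfrac{n+5}{2}-r=d(A,I_n)-r\leq 2$, but this is only a cosmetic difference from your alternate/nonalternate split.
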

\begin{proof}
Let ${\cal M}$ be the set in the right-hand side of~\eqref{eq69}-\eqref{eq70}. If $A\in {\cal SD}_n$, then $\rank(A-I_n)=\frac{n+1}{2}$ and $A-I_n$ is nonzero. Hence, either $A-I_n$ is nonalternate and $d_{\widehat{\Gamma}_n}(A,I_n)=\rank(A-I_n)=\frac{n+1}{2}$, or $A-I_n$ is alternate with even rank $\frac{n+1}{2}$, in which case $d_{\widehat{\Gamma}_n}(A,I_n)=\rank(A-I_n)+1=\frac{n+3}{2}$. Therefore, $A\in {\cal M}$.

Conversely, assume that  $A\in {\cal M}$. Let $r=\rank(A-I_n)$. We split two cases.
\begin{myenumerate}{Case}
\item Let $d_{\widehat{\Gamma}_n}(A,I_n)=\frac{n+1}{2}$. Then, $\frac{n+1}{2}\geq r$. Since $\frac{n+5}{2}-r=d(A,I_n)-r\leq 2$ by Theorems~\ref{thm-nonalter} and \ref{thm-alter}, it follows that $r=\frac{n+1}{2}$, i.e. $A\in {\cal SD}_n$.
\item Let $d_{\widehat{\Gamma}_n}(A,I_n)=\frac{n+3}{2}$ and $\frac{n+1}{2}$ is even. If $A-I_n$ is alternate, then $r=d_{\widehat{\Gamma}_n}(A,I_n)-1=\frac{n+1}{2}$ and $A\in {\cal SD}_n$. Assume now that $A-I_n$ is nonalternate. Then, $r=d_{\widehat{\Gamma}_n}(A,I_n)=\frac{n+3}{2}$ is odd and $d(A,I_n)=r+1$. By Theorem~\ref{thm-nonalter}, $A-I_n=\sum_{i=1}^r {\bf x}_i^2$ for some linearly independent ${\bf x}_1,\ldots,{\bf x}_r\in \FF_2^n$ such that ${\bf x}_i^{\tr} {\bf x}_i=1$ for all $i$, and there exist $i,j$ such that ${\bf x}_i^{\tr} {\bf x}_j=0$. We get a contradiction by Lemma~\ref{diameter_lemma1}~\eqref{ii-diameter} because $r$ is odd.\qedhere
\end{myenumerate}
\end{proof}

In the rest of the paper, we describe how `classical' automorphisms of the graph $\Gamma_n$ that fix the identity matrix $I_n$, namely the maps
$$\Phi(A)=A^{-1}\quad \textrm{and} \quad \Phi(A)=PAP^{\tr}\ \textrm{where}\ P^{\tr}=P^{-1}\quad (A\in\sgl),$$
act on ${\cal SD}_n$ (the classification of all automorphisms of $\Gamma_n$ is expected in~\cite{inPreparation}). Observe that we really have $\Phi({\cal SD}_n)={\cal SD}_n$ for both automorphisms because
\begin{align*}
d\big(\Phi(A),I_n\big)&=d\big(\Phi(A),\Phi(I_n)\big)=d(A,I),\\
\rank(A^{-1}-I_n)&=\rank\big(A(A^{-1}-I_n)\big)=\rank(A-I_n),\\
\rank(PAP^{\tr}-I_n)&=\rank\big(P(A-I_n)P^{\tr}\big)=\rank(A-I_n)
\end{align*}
for all $A\in \sgl$. We denote ${\cal F}_{C}^{-1}=\{A^{-1}: A\in {\cal F}_{C}\}$, $P{\cal F}_{C}P^{\tr}=\{PAP^{\tr}: A\in {\cal F}_{C}\}$, and refer $P\in GL_n(\FF_2)$ with $P^{\tr}=P^{-1}$ as an \emph{orthogonal} matrix. Orthogonal matrices form a group, here denoted by ${\cal O}_n(\FF_2)$. Given $P\in {\cal O}_n(\FF_2)$, let $P\oplus 1\in {\cal O}_{n+1}(\FF_2)$ be defined by
$$\left(\begin{array}{cc}P&O_{n\times 1}\\
O_{1\times n}&1\end{array}\right).$$
Observe that $(P\oplus 1){\bf y}=\overline{P\underline{{\bf y}}}$ whenever ${\bf y}\in \FF_2^{n+1}$ satisfies ${\bf y}^{\tr}{\bf y}=0$. Given a self-dual code $C$ in $\FF_2^{n+1}$, we denote $(P\oplus 1)C=\{(P\oplus 1){\bf y}: {\bf y}\in C\}$.
\begin{thm}\label{thm-delovanjeinverza}
${\cal F}_C^{-1}={\cal F}_C$ for each self-dual code $C$ in $\FF_2^{n+1}$.
\end{thm}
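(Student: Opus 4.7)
The plan is to establish the inclusion ${\cal F}_C^{-1}\subseteq {\cal F}_C$; the reverse inclusion follows at once, since applied to $A^{-1}\in{\cal F}_C$ it yields $A=(A^{-1})^{-1}\in{\cal F}_C^{-1}$. Fix $A\in{\cal F}_C$. As observed in the paragraph preceding the statement, inversion is a $\Gamma_n$-automorphism fixing $I_n$ and therefore preserves ${\cal SD}_n$, so $A^{-1}\in{\cal SD}_n$. By Theorem~\ref{particija}, $A^{-1}$ lies in a unique family ${\cal F}_{C'}$, and it remains to prove $C'=C$.

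I would treat the two sub-cases $A=A'_{{\cal B}}$ and $A=A''_{{\cal B}}$ uniformly by writing $A=I_n+YY^{\tr}$, where, setting $r=\frac{n+1}{2}$, $Y$ is either the $n\times r$ matrix with columns $\underline{{\bf y}_1},\ldots,\underline{{\bf y}_r}$, or the $n\times (r+1)$ matrix that carries in addition the column $\sum_{i=1}^r\underline{{\bf y}_i}$. Entry-wise computations from the proof of Theorem~\ref{particija}, together with Remark~\ref{opomba1} in the alternate case, show that $Y^{\tr}Y\in\ronetzero$ in both situations. Proposition~\ref{prop0} then yields $(I+Y^{\tr}Y)^{-1}=I+Y^{\tr}Y$, and Lemma~\ref{l2} gives the explicit formula
\begin{equation*}
A^{-1}-I_n=Y(I+Y^{\tr}Y)Y^{\tr}=YY^{\tr}+Y(Y^{\tr}Y)Y^{\tr},
\end{equation*}
both summands having column space contained in $\mathrm{col}(Y)$.

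The decisive structural fact is that $\mathrm{col}(Y)$ equals $\underline{C}:=\{\underline{{\bf v}}:{\bf v}\in C\}$, which is $r$-dimensional because the map $\underline{\cdot}\colon C\to\FF_2^n$ is linear and injective (as ${\bf e}_{n+1}\notin C$). Since $\rank(A^{-1}-I_n)=\rank(A-I_n)=r$, the above inclusion upgrades to an equality $\mathrm{col}(A^{-1}-I_n)=\underline{C}$. Applying Proposition~\ref{selfdual} to $A^{-1}$ produces linearly independent vectors $\widetilde{{\bf x}_1},\ldots,\widetilde{{\bf x}_r}$ with $A^{-1}-I_n=\sum_{j}\widetilde{{\bf x}_j}^2$ or $\sum_j\widetilde{{\bf x}_j}^2+(\sum_j\widetilde{{\bf x}_j})^2$, together with the identification $C'=\langle\overline{\widetilde{{\bf x}_1}},\ldots,\overline{\widetilde{{\bf x}_r}}\rangle$. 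A dimension count forces $\widetilde{{\bf x}_j}\in\mathrm{col}(A^{-1}-I_n)=\underline{C}$, so $\widetilde{{\bf x}_j}=\underline{{\bf v}_j}$ for a unique ${\bf v}_j\in C$. Self-orthogonality ${\bf v}_j^{\tr}{\bf v}_j=0$ makes the last entry of ${\bf v}_j$ equal to $\underline{{\bf v}_j}^{\tr}\underline{{\bf v}_j}$, whence $\overline{\widetilde{{\bf x}_j}}=\overline{\underline{{\bf v}_j}}={\bf v}_j\in C$. Therefore $C'=\langle {\bf v}_1,\ldots,{\bf v}_r\rangle\subseteq C$, and equality of dimensions gives $C'=C$.

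The only genuinely technical point I foresee is the uniform verification that $Y^{\tr}Y\in\ronetzero$ in the $A''_{{\cal B}}$ sub-case: a brief split on the parity of $|S_1|$ (the number of basis vectors whose last entry equals $1$) reduces it to the formulas~\eqref{eq60} already recorded in the proof of Theorem~\ref{particija}, combined with the identity $|S_1|^2\equiv|S_1|\pmod{2}$ needed to control the extra $(r+1)$-st row and column.
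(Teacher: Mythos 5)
Your proof is correct and rests on the same core idea as the paper's: show that the column space of $A^{-1}-I_n$ coincides with $\langle\underline{{\bf y}_1},\ldots,\underline{{\bf y}_{\frac{n+1}{2}}}\rangle$ and lift back through $\overline{(\cdot)}$ to conclude that the associated self-dual code is unchanged. The execution differs in two respects, both of which streamline the argument. First, the paper writes $A^{-1}=I_n+YBY^{\tr}$ for an \emph{unspecified} $B\in SGL_{\frac{n+1}{2}}(\FF_2)$ via Lemma~\ref{l2}, then needs Lemma~\ref{alternate-canonical} to extract vectors ${\bf z}_i$ and a rank-contradiction argument to show $\langle {\bf z}_i\rangle=\langle\underline{{\bf y}_i}\rangle$; your observation that $Y^{\tr}Y\in\ronetzero$ in both sub-cases (a fact already recorded, in the form of the rank-one and zero-trace statements, in the proof of Theorem~\ref{particija}) lets you invoke Proposition~\ref{prop0} for an explicit inverse and read off the column-space containment at once, with equality forced by $\rank(A^{-1}-I_n)=\frac{n+1}{2}$. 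Second, by routing the final identification through Theorem~\ref{particija} and Proposition~\ref{selfdual} you avoid the paper's closing step of exhibiting $A^{-1}$ explicitly as $A'_{\dot{\cal B}}$ or $A''_{\dot{\cal B}}$ for a concrete basis $\dot{\cal B}$ of $C$ and verifying $\dot{\cal B}\in\mathfrak{B}_C^2$ via~\eqref{eq59}. Both shortcuts are legitimate, since the facts you cite are established before or within the proof of Theorem~\ref{particija}; the only price is that your version identifies the code containing $A^{-1}$ abstractly rather than producing the basis realizing $A^{-1}$ as a member of ${\cal F}_C$.
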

\begin{proof}
Let $A\in {\cal F}_C$. Observe from \eqref{eq57}-\eqref{eq58} that there exists a basis ${\cal B}=\{{\bf y}_1,\ldots,{\bf y}_{\frac{n+1}{2}}\}\in \mathfrak{B}_C$ such that either $A=I_n+YY^{\tr}$ or $A=I_n+Y(I_{\frac{n+1}{2}}+J_{\frac{n+1}{2}})Y^{\tr}$ where $Y$ is the $n\times \big(\frac{n+1}{2}\big)$ matrix with $\underline{{\bf y}_i}$ as the $i$-th column. By Lemma~\ref{l2}, $A^{-1}=I_n+Y B Y^{\tr}$ for a suitable matrix $B\in S_{\frac{n+1}{2}}(\FF_2)$. Since $A^{-1}\in {\cal SD}_n$, it follows that $\rank (Y B Y^{\tr})=\frac{n+1}{2}$, i.e. $B\in SGL_{\frac{n+1}{2}}(\FF_2)$. By Lemma~\ref{alternate-canonical}, either $Y B Y^{\tr}=\sum_{i=1}^{\frac{n+1}{2}} {\bf z}_i^2 + ({\bf z}_1+ \cdots + {\bf z}_{\frac{n+1}{2}})^2$ or $Y B Y^{\tr}=\sum_{i=1}^{\frac{n+1}{2}} {\bf z}_i^2$  for some linearly independent vectors ${\bf z}_1,\ldots,{\bf z}_{\frac{n+1}{2}}\in \FF_2^n$, depending on whether $Y B Y^{\tr}$ is alternate or not. The alternate case is possible only if $\frac{n+1}{2}$ is even.

We claim that $\langle {\bf z}_1,\ldots,{\bf z}_{\frac{n+1}{2}}\rangle = \langle \underline{{\bf y}_1},\ldots, \underline{{\bf y}_{\frac{n+1}{2}}}\rangle$. If this is not true, then ${\bf z}_j\notin \langle \underline{{\bf y}_1},\ldots, \underline{{\bf y}_{\frac{n+1}{2}}}\rangle$ for some $j$. Let $R\in GL_n(\FF_2)$ be any invertible matrix with $Y$ as its $n\times \big(\frac{n+1}{2}\big)$ left block and ${\bf z}_j$ as its $\big(\frac{n+3}{2}\big)$-th column. Then,
$$R\left(\begin{array}{ccc}B&O&O\\
O&-1&O\\
O&O&O\end{array}\right)R^{\tr}=Y B Y^{\tr}-{\bf z}_j^2\in \left\{\sum_{i\neq j} {\bf z}_i^2 , \sum_{i\neq j} {\bf z}_i^2 + ({\bf z}_1+ \cdots + {\bf z}_{\frac{n+1}{2}})^2\right\}$$
is a contradiction because the left-hand side is a matrix of rank $\frac{n+3}{2}$, while the right-hand side is a matrix with rank at most $\frac{n+1}{2}$. Therefore, the set $\dot{{\cal B}}=\{\overline{{\bf z}}_1,\ldots,\overline{{\bf z}}_{\frac{n+1}{2}}\}$ is a basis of $C$ and either $A^{-1}=A''_{\dot{{\cal B}}}$ with $\frac{n+1}{2}$ even or  $A^{-1}=A'_{\dot{{\cal B}}}$. In the last case, \eqref{eq59} yields $\dot{{\cal B}}\in \mathfrak{B}_C^2$. Hence, $A^{-1}\in {\cal F}_C$.
\end{proof}
\begin{remark}
If $\frac{n+1}{2}$ is even and all members of a basis ${\cal B}=\{{\bf y}_1,\ldots,{\bf y}_{\frac{n+1}{2}}\}\in \mathfrak{B}_C$ have the last entry 1 (i.e. $\underline{{\bf y}_i}^{\tr}\underline{{\bf y}_j}=1$ for all $i,j$), then it follows from Lemma~\ref{l2} and Proposition~\ref{prop0} that $(A'_{{\cal B}})^{-1}=A''_{{\cal B}}$ and $(A''_{{\cal B}})^{-1}=A'_{{\cal B}}$.
\end{remark}

To understand how automorphisms of the form $A\mapsto PAP^{\tr}$, with $P$ orthogonal, act on ${\cal SD}_n$, we need two more lemmas. The proof of Lemma~\ref{lemma8-7} is straightforward and left to the reader.
\begin{lemma}\label{lemma8-7}
Let $C$ be a self-dual code in $\FF_2^{n+1}$ and $P\in {\cal O}_n(\FF_2)$. Then, $\widetilde{C}=(P\oplus 1)C$ is a self-dual code. Moreover, if ${\cal B}=\{{\bf y}_1,\ldots,{\bf y}_{\frac{n+1}{2}}\}\in \mathfrak{B}_C$, then $\widetilde{{\cal B}}=\{(P\oplus 1){\bf y}_1,\ldots,(P\oplus 1){\bf y}_{\frac{n+1}{2}}\}\in  \mathfrak{B}_{\widetilde{C}}$ and:
\begin{enumerate}
\item\label{codei} ${\cal B}\in \mathfrak{B}_C^2$ if and only if  $\widetilde{{\cal B}}\in \mathfrak{B}_{\widetilde{C}}^2$;
\item\label{codeii} if $A'_{{\cal B}}\in {\cal F}_{C}$, then $PA'_{{\cal B}}P^{\tr}=A'_{\widetilde{{\cal B}}}\in {\cal F}_{\widetilde{C}}$;
\item\label{codeiii} if $A''_{{\cal B}}\in {\cal F}_{C}$, then $PA''_{{\cal B}}P^{\tr}=A''_{\widetilde{{\cal B}}}\in {\cal F}_{\widetilde{C}}$;
\item\label{codeiv} $P{\cal F}_{C}P^{\tr}={\cal F}_{\widetilde{C}}$.
\end{enumerate}
\end{lemma}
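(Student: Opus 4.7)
\bigskip

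\noindent\textbf{Proof plan for Lemma~\ref{lemma8-7}.}
The plan is to reduce everything to two elementary observations about $P\oplus 1\in {\cal O}_{n+1}(\FF_2)$: it preserves the standard inner product on $\FF_2^{n+1}$, and it acts on a column vector by applying $P$ to the first $n$ entries while keeping the last entry unchanged. In particular, for any ${\bf y}\in\FF_2^{n+1}$ one has $\underline{(P\oplus 1){\bf y}}=P\underline{{\bf y}}$, and the last entry of $(P\oplus 1){\bf y}$ equals the last entry of ${\bf y}$.

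First I would check that $\widetilde{C}=(P\oplus 1)C$ is a self-dual code: since $P\oplus 1$ is orthogonal, it preserves inner products, so self-orthogonality of $C$ transfers to $\widetilde{C}$, and since $P\oplus 1$ is invertible, $\dim \widetilde{C}=\dim C=\tfrac{n+1}{2}$. Similarly, invertibility shows that the image $\widetilde{{\cal B}}$ of a basis ${\cal B}$ is a basis of $\widetilde{C}$. Claim~\eqref{codei} is then immediate from the fact that $(P\oplus 1){\bf y}_i$ and ${\bf y}_i$ share the same last entry, so the parities of $|\{i:({\bf y}_i)_{n+1}=1\}|$ and $|\{i:((P\oplus 1){\bf y}_i)_{n+1}=1\}|$ coincide.

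For \eqref{codeii} and \eqref{codeiii}, I would just compute. Using $PP^{\tr}=I_n$ and $\underline{(P\oplus 1){\bf y}_i}=P\underline{{\bf y}_i}$,
\begin{align*}
PA'_{{\cal B}}P^{\tr}&=PP^{\tr}+\sum_{i=1}^{\frac{n+1}{2}} (P\underline{{\bf y}_i})(P\underline{{\bf y}_i})^{\tr}=I_n+\sum_{i=1}^{\frac{n+1}{2}} \underline{(P\oplus 1){\bf y}_i}^2=A'_{\widetilde{{\cal B}}},
\end{align*}
and an entirely analogous computation, using in addition that $P(\underline{{\bf y}_1}+\cdots+\underline{{\bf y}_{\frac{n+1}{2}}})=\underline{(P\oplus 1){\bf y}_1}+\cdots+\underline{(P\oplus 1){\bf y}_{\frac{n+1}{2}}}$, yields $PA''_{{\cal B}}P^{\tr}=A''_{\widetilde{{\cal B}}}$. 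Combined with \eqref{codei}, the membership statements $A'_{\widetilde{{\cal B}}}\in {\cal F}_{\widetilde{C}}$ and $A''_{\widetilde{{\cal B}}}\in {\cal F}_{\widetilde{C}}$ follow directly from the definition of the families.

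Finally, claim \eqref{codeiv} is a consequence of \eqref{codei}--\eqref{codeiii}: the inclusion $P{\cal F}_{C}P^{\tr}\subseteq {\cal F}_{\widetilde{C}}$ is immediate from \eqref{codeii}--\eqref{codeiii}, since every element of ${\cal F}_C$ is of the form $A'_{{\cal B}}$ with ${\cal B}\in \mathfrak{B}_C^2$ or $A''_{{\cal B}}$ with $\tfrac{n+1}{2}$ even. For the reverse inclusion, apply the same argument with $P^{-1}=P^{\tr}\in {\cal O}_n(\FF_2)$ to the self-dual code $\widetilde{C}$, noting that $(P^{-1}\oplus 1)\widetilde{C}=C$, to obtain $P^{\tr}{\cal F}_{\widetilde{C}}P\subseteq {\cal F}_{C}$, i.e.\ ${\cal F}_{\widetilde{C}}\subseteq P{\cal F}_{C}P^{\tr}$. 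I don't anticipate any genuine obstacle; the only point that merits care is keeping the bookkeeping of the $A'$/$A''$ dichotomy and the parity class $\mathfrak{B}_C^1$ vs.\ $\mathfrak{B}_C^2$ straight, which \eqref{codei} handles once and for all.
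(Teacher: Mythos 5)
Your argument is correct and is exactly the routine verification the paper has in mind: the authors explicitly leave the proof of this lemma to the reader as straightforward, and your reduction to the two observations that $P\oplus 1$ preserves the inner product and satisfies $\underline{(P\oplus 1){\bf y}}=P\underline{{\bf y}}$ with the last entry unchanged is the intended route. All four items, including the reverse inclusion in (iv) via $P^{-1}=P^{\tr}$, check out.
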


\begin{lemma}\label{lema-baza-self-dual}
Each self-dual code $C$ in $\FF_2^{n+1}$ has a basis $\{{\bf y}_1,\ldots,{\bf y}_{\frac{n+1}{2}}\}$ such that ${\bf y}_1+\cdots +{\bf y}_{\frac{n+1}{2}}={\bf j}_{n+1}$.
\end{lemma}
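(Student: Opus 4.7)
The plan is to first observe that ${\bf j}_{n+1}\in C$, and then to modify any basis of $C$ by a single replacement so that the sum of the new basis becomes ${\bf j}_{n+1}$.

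For the containment ${\bf j}_{n+1}\in C$: since $C$ is self-orthogonal, each codeword ${\bf c}\in C$ satisfies ${\bf c}^{\tr}{\bf c}=0$, which in characteristic two is equivalent to saying that the Hamming weight of ${\bf c}$ is even. Consequently, ${\bf j}_{n+1}^{\tr}{\bf c}$, being the sum of the entries of ${\bf c}$, equals the weight of ${\bf c}$ modulo two, which is zero. Hence ${\bf j}_{n+1}\in C^{\bot}=C$.

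Now let $k=\frac{n+1}{2}$ and pick any basis $\{{\bf z}_1,\ldots,{\bf z}_k\}$ of $C$, setting ${\bf s}=\sum_{i=1}^k {\bf z}_i$. Since ${\bf j}_{n+1}+{\bf s}\in C$, I can write ${\bf j}_{n+1}+{\bf s}=\sum_{i=1}^k a_i {\bf z}_i$ for some $a_i\in\FF_2$, and I claim that some $a_j=0$. Indeed, if $a_i=1$ for all $i$, then ${\bf j}_{n+1}+{\bf s}={\bf s}$, forcing ${\bf j}_{n+1}=0$, which is impossible because $n\geq 3$. Having chosen such a $j$, I would define ${\bf y}_j:={\bf z}_j+({\bf j}_{n+1}+{\bf s})$ and ${\bf y}_i:={\bf z}_i$ for $i\neq j$. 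Then directly $\sum_{i=1}^k{\bf y}_i={\bf s}+({\bf j}_{n+1}+{\bf s})={\bf j}_{n+1}$ as required.

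The only thing left to check is that $\{{\bf y}_1,\ldots,{\bf y}_k\}$ is still a basis of $C$. The change-of-basis matrix equals $I_k+{\bf a}{\bf e}_j^{\tr}$ with ${\bf a}=(a_1,\ldots,a_k)^{\tr}$, whose determinant by Lemma~\ref{l2} is $1+{\bf e}_j^{\tr}{\bf a}=1+a_j=1$, so it is invertible. I do not foresee any serious obstacle in this argument; the slight subtlety is making sure that the replacement index $j$ exists and yields an invertible transition matrix, which is exactly why we pick $j$ with $a_j=0$.
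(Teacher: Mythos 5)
Your proof is correct, and it takes a somewhat different route from the paper's. The paper writes ${\bf j}_{n+1}=\sum_{i\in S}{\bf z}_i$ for a nonempty $S\subseteq\{1,\ldots,\frac{n+1}{2}\}$ and then, as long as $S^c\neq\emptyset$, performs the elementary replacement ${\bf z}_s\mapsto {\bf z}_s+{\bf z}_t$ (with $s\in S$, $t\in S^c$) to enlarge $S$ by one index, iterating until $S$ is everything; each step is a transvection, so the basis property is preserved for free, and no determinant computation is needed. You instead do the whole correction in a single rank-one update ${\bf y}_j={\bf z}_j+({\bf j}_{n+1}+{\bf s})$, choosing $j$ with $a_j=0$ (which exists since ${\bf j}_{n+1}\neq 0$), and verify invertibility of the transition matrix $I_k+{\bf a}{\bf e}_j^{\tr}$ via the determinant formula of Lemma~\ref{l2}. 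Your version is shorter and non-iterative; the paper's is more elementary at each step. You also supply the justification that ${\bf j}_{n+1}\in C$ (every codeword of a self-orthogonal binary code has even weight, so ${\bf j}_{n+1}\in C^{\bot}=C$), which the paper uses without comment; including it is a small but genuine plus.
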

\begin{proof}
Let $\{{\bf z}_1,\ldots,{\bf z}_{\frac{n+1}{2}}\}$ be an arbitrary basis of $C$. Since ${\bf j}_{n+1}\in C$, there exists a nonempty subset $S\subseteq \{1,\ldots,\frac{n+1}{2}\}$ such that ${\bf j}_{n+1}=\sum_{i\in S} {\bf z}_i$.

If $S^c=\emptyset$, we select ${\bf y}_i={\bf z}_i$. Otherwise, we fix $s\in S$ and $t\in S^{c}$ to define
$${\bf w}_i=\left\{\begin{array}{ll}{\bf z}_i& \textrm{if}\ i\neq s,\\
{\bf z}_s+{\bf z}_t& \textrm{if}\ i=s.\end{array}\right.$$
Then, ${\bf j}_{n+1}=\sum_{i\in S\cup\{t\}} {\bf w}_i$ and $\{{\bf w}_1,\ldots,{\bf w}_{\frac{n+1}{2}}\}\in \mathfrak{B}_{C}$. Now we repeat the last paragraph (possibly several times) with $S, {\bf z}_i$ replaced by $S\cup \{t\}, {\bf w}_i$.
\end{proof}

\begin{thm}\label{thm-code}
If $C,\widetilde{C}$ are self-dual codes in $\FF_2^{n+1}$, then there exists $P\in {\cal O}_{n}(\FF_2)$ such that $\widetilde{C}=(P\oplus 1)C$ and  statements~\eqref{codei}-\eqref{codeiv} in Lemma~\ref{lemma8-7} are true.
\end{thm}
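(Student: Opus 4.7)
The approach is to apply Witt's theorem (Lemma~\ref{witt-osnovna}) to a pair of carefully chosen bases. Once an orthogonal matrix $P$ with $\widetilde{C}=(P\oplus 1)C$ has been produced, statements (i)--(iv) follow immediately from Lemma~\ref{lemma8-7}, so the entire task reduces to constructing such a $P$.

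First I would sharpen Lemma~\ref{lema-baza-self-dual} and show that every self-dual code $D\subseteq \FF_2^{n+1}$ admits a basis $\{{\bf z}_1,\ldots,{\bf z}_{(n+1)/2}\}$ with ${\bf z}_1+\cdots+{\bf z}_{(n+1)/2}={\bf j}_{n+1}$, $z_{1,n+1}=1$, and $z_{i,n+1}=0$ for $i\geq 2$. Codewords of a binary self-dual code have even weight, so ${\bf j}_{n+1}\in D$; on the other hand ${\bf e}_{n+1}\notin D$ because ${\bf e}_{n+1}^{\tr}{\bf e}_{n+1}=1$. Hence $D':=\{{\bf c}\in D:c_{n+1}=0\}$ has codimension one in $D$, and taking any basis ${\bf z}_2,\ldots,{\bf z}_{(n+1)/2}$ of $D'$ together with ${\bf z}_1:={\bf j}_{n+1}+{\bf z}_2+\cdots+{\bf z}_{(n+1)/2}$ yields the required basis.

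Next I would choose such bases $\{{\bf z}_i\}$ for $C$ and $\{\widetilde{{\bf z}}_i\}$ for $\widetilde{C}$ and define $\sigma:U\to \FF_2^n$ by $\sigma(\underline{{\bf z}_i})=\underline{\widetilde{{\bf z}}_i}$ on $U:=\langle \underline{{\bf z}_1},\ldots,\underline{{\bf z}_{(n+1)/2}}\rangle$. The underlined vectors are linearly independent, since any relation $\sum c_i\underline{{\bf z}_i}=0$ would force $\sum c_i {\bf z}_i\in\{0,{\bf e}_{n+1}\}\cap C=\{0\}$, so $\sigma$ is well defined and injective. Self-orthogonality gives $\underline{{\bf z}_i}^{\tr}\underline{{\bf z}_j}={\bf z}_i^{\tr}{\bf z}_j+z_{i,n+1}z_{j,n+1}=z_{i,n+1}z_{j,n+1}$, so by the choice of basis the Gram matrices of $\{\underline{{\bf z}_i}\}$ and $\{\underline{\widetilde{{\bf z}}_i}\}$ both equal ${\bf e}_1{\bf e}_1^{\tr}$, and $\sigma$ preserves the bilinear form. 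Moreover $\sum \underline{{\bf z}_i}=\underline{{\bf j}_{n+1}}={\bf j}_n$, the analogous identity holds for $\widetilde{U}:=\sigma(U)$, and $\sigma({\bf j}_n)=\sum \underline{\widetilde{{\bf z}}_i}={\bf j}_n$. Both hypotheses of Lemma~\ref{witt-osnovna} are satisfied, so $\sigma$ extends to an orthogonal automorphism of $\FF_2^n$; let $P\in{\cal O}_n(\FF_2)$ be its matrix.

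Finally, since ${\bf z}_i^{\tr}{\bf z}_i=0$, the identity $(P\oplus 1){\bf y}=\overline{P\underline{{\bf y}}}$ valid when ${\bf y}^{\tr}{\bf y}=0$ gives $(P\oplus 1){\bf z}_i=\overline{P\underline{{\bf z}_i}}=\overline{\underline{\widetilde{{\bf z}}_i}}=\widetilde{{\bf z}}_i$ for all $i$, and hence $(P\oplus 1)C=\widetilde{C}$; Lemma~\ref{lemma8-7} then yields (i)--(iv). The main obstacle is the basis refinement: the condition that exactly one of ${\bf z}_1,\ldots,{\bf z}_{(n+1)/2}$ has last entry one forces the two Gram matrices to coincide so that $\sigma$ is an isometry, while the sum condition places ${\bf j}_n$ into both $U$ and $\widetilde{U}$ as the same linear combination of the basis, which is exactly what makes $\sigma({\bf j}_n)={\bf j}_n$ automatic and allows Witt's theorem to apply.
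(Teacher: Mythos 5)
Your proof is correct, and while the endgame is the same as the paper's (build matched bases of $C$ and $\widetilde{C}$ whose projections to $\FF_2^n$ have identical Gram matrices and sum to ${\bf j}_n$, then invoke Lemma~\ref{witt-osnovna} to produce $P$), your normalization of the bases is genuinely different and considerably cleaner. The paper starts from Lemma~\ref{lema-baza-self-dual}, splits into the cases $n=3$ (handled ad hoc via Example~\ref{example}), $n=5$, and $n\geq 7$, and for large $n$ must massage the two rank-one Gram matrices $[\underline{{\bf y}_i}^{\tr}\underline{{\bf y}_j}]$ into a common canonical form by invoking Lemma~\ref{lemma2}~\eqref{lemma2i} on the first $\frac{n-1}{2}$ vectors and re-permuting; the dimension restrictions in that lemma are precisely what force the small-$n$ cases to be treated separately. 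You instead observe that $D'=\{{\bf c}\in D: c_{n+1}=0\}$ has codimension one (since ${\bf e}_{n+1}\notin D=D^{\bot}$ forces some codeword to have last entry $1$), take a basis of $D'$ and complete it by ${\bf z}_1={\bf j}_{n+1}+{\bf z}_2+\cdots+{\bf z}_{(n+1)/2}$; because $\underline{{\bf z}_i}^{\tr}\underline{{\bf z}_j}=z_{i,n+1}z_{j,n+1}$ the Gram matrix is then forced to be ${\bf e}_1{\bf e}_1^{\tr}$ for \emph{every} self-dual code, so the two Gram matrices coincide with no further work, the sum condition $\sum\underline{{\bf z}_i}={\bf j}_n$ makes $\sigma({\bf j}_n)={\bf j}_n$ automatic, and the whole argument is uniform in $n$. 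The remaining steps (linear independence of the projections via ${\bf e}_{n+1}\notin C$, orthogonality of $P$ from $\sigma({\bf e}_i)^{\tr}\sigma({\bf e}_j)=\delta_{ij}$, and $(P\oplus 1){\bf z}_i=\overline{P\underline{{\bf z}_i}}=\widetilde{{\bf z}}_i$) match the paper's. What the paper's longer route buys is essentially nothing here beyond reusing machinery (Lemma~\ref{lemma2}) already developed for the distance theorems; your hyperplane-adapted basis is the shorter path.
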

\begin{proof}
It suffices to find $P\in {\cal O}_{n}(\FF_2)$ with $\widetilde{C}=(P\oplus 1)C$. The rest follows from Lemma~\ref{lemma8-7}.
If $n=3$, then it is easily deduced from Example~\ref{example} that ${\cal F}_{\widetilde{C}}=P{\cal F}_{C}P^{\tr}$ for an appropriate permutation matrix $P$. A straightforward argument shows that the same permutation matrix $P$ satisfies $\widetilde{C}=(P\oplus 1)C$.

Hence, we may assume that $n\geq 5$. We claim that there exist $\{{\bf z}_1,\ldots,{\bf z}_{\frac{n+1}{2}}\}\in \mathfrak{B}_C$ and $\{\widetilde{{\bf z}}_1,\ldots,\widetilde{{\bf z}}_{\frac{n+1}{2}}\}\in \mathfrak{B}_{\widetilde{C}}$ such that
\begin{equation}\label{eq64}
\underline{{\bf z}_1}+\cdots+\underline{{\bf z}_{\frac{n+1}{2}}}={\bf j}_{n}=\underline{\widetilde{{\bf z}}_1}+\cdots+\underline{\widetilde{{\bf z}}_{\frac{n+1}{2}}}
\end{equation}
and
\begin{equation}\label{eq65}
\underline{{\bf z}_{i}}^{\tr}\underline{{\bf z}_{j}}=\underline{\widetilde{{\bf z}}_{i}}^{\tr}\underline{\widetilde{{\bf z}}_{j}}\qquad \left(1\leq i,j\leq \frac{n+1}{2}\right).
\end{equation}
Lemma~\ref{lema-baza-self-dual} provides ${\cal B}=\{{\bf y}_1,\ldots,{\bf y}_{\frac{n+1}{2}}\}\in \mathfrak{B}_C, \widetilde{{\cal B}}=\{\widetilde{{\bf y}}_1,\ldots,\widetilde{{\bf y}}_{\frac{n+1}{2}}\}\in \mathfrak{B}_{\widetilde{C}}$ with
\begin{equation}\label{eq61}
{\bf y}_1+\cdots+{\bf y}_{\frac{n+1}{2}}={\bf j}_{n+1}=\widetilde{{\bf y}}_1+\cdots+\widetilde{{\bf y}}_{\frac{n+1}{2}}
\end{equation}
and therefore
\begin{equation}\label{eq67}
\underline{{\bf y}_1}+\cdots+\underline{{\bf y}_{\frac{n+1}{2}}}={\bf j}_{n}=\underline{\widetilde{{\bf y}}_1}+\cdots+\underline{\widetilde{{\bf y}}_{\frac{n+1}{2}}}.
\end{equation}
We separate two cases.
\begin{myenumerate}{Case}
\item\label{case1-sd} Let $n=5$. By~\eqref{eq61}, either all vectors ${\bf y}_1, {\bf y}_2, {\bf y}_3$ have the last entry 1, or there exist $i$ and distinct $j,k\in \{1,\ldots, \frac{n+1}{2}\}\backslash\{i\}$ such that ${\bf y}_i$ has the last entry 1, whereas ${\bf y}_j,{\bf y}_k$ have the last entry 0. In the first case we select $\{{\bf z}_1,{\bf z}_2,{\bf z}_3\}=\{{\bf y}_1,{\bf y}_2,{\bf y}_3\}$ whereas in the second case we define ${\bf z}_i={\bf y}_i$, ${\bf z}_j={\bf y}_j+{\bf y}_i$, ${\bf z}_k={\bf y}_k+{\bf y}_i$. We define $\widetilde{{\bf z}}_1, \widetilde{{\bf z}}_2, \widetilde{{\bf z}}_3$ analogously and achieve \eqref{eq64}, \eqref{eq65}.

\item Let $n\geq 7$. We may assume that each of the bases ${\cal B}, \widetilde{{\cal B}}$ contains at least one vector with the last entry 0, and at least two vectors with the last entry 1. In fact, if for example ${\cal B}$ does not meet this criteria, then as in Case~\ref{case1-sd} either all vectors in ${\cal B}$ have the last entry 1, or ${\bf y}_i$, for some $i$, is the only such vector. In both cases, we select distinct $j,k\in \{1,\ldots, \frac{n+1}{2}\}\backslash\{i\}$ and replace ${\bf y}_j, {\bf y}_k$ by ${\bf y}_j+{\bf y}_i, {\bf y}_k+{\bf y}_i$. Since $n\geq 7$, we  get the desired property while keeping \eqref{eq61}.

    \quad Further, by permuting the indices we may assume  the last entries of ${\bf y}_{\frac{n+1}{2}}$ and $\widetilde{{\bf y}}_{\frac{n+1}{2}}$ are both 1. Hence,
\begin{align}
\label{eq62}\underline{{\bf y}_{\frac{n+1}{2}}}^{\tr}\underline{{\bf y}_{\frac{n+1}{2}}}&=1=\underline{\widetilde{{\bf y}}_{\frac{n+1}{2}}}^{\tr}\underline{\widetilde{{\bf y}}_{\frac{n+1}{2}}},\\
\label{eq63}\underline{{\bf y}_{i_1}}^{\tr}\underline{{\bf y}_{i_1}}&=1=\underline{\widetilde{{\bf y}}_{j_1}}^{\tr}\underline{\widetilde{{\bf y}}_{j_1}},\\
\label{eq66}\underline{{\bf y}_{i_2}}^{\tr}\underline{{\bf y}_{i_2}}&=0=\underline{\widetilde{{\bf y}}_{j_2}}^{\tr}\underline{\widetilde{{\bf y}}_{j_2}}
\end{align}
for some $i_1,i_2,j_1,j_2\in\{1,\ldots,\frac{n-1}{2}\}$ and~\eqref{eq67} is true. Recall from the proof of Theorem~\ref{particija} that matrices $[\underline{{\bf y}_i}^{\tr} \underline{{\bf y}_j}]_{i,j=1}^{\frac{n+1}{2}}$ and $[\underline{\widetilde{{\bf y}}_i}^{\tr} \underline{\widetilde{{\bf y}}_j}]_{i,j=1}^{\frac{n+1}{2}}$ are both of rank one, but their traces are 1, as \eqref{eq61} implies that ${\cal B}\in \mathfrak{B}_{C}^1$ and  $\widetilde{{\cal B}}\in \mathfrak{B}_{\widetilde{C}}^1$. By~\eqref{eq62}-\eqref{eq66}, we deduce that matrices $[\underline{{\bf y}_i}^{\tr} \underline{{\bf y}_j}]_{i,j=1}^{\frac{n-1}{2}}$, $[\underline{\widetilde{{\bf y}}_i}^{\tr} \underline{\widetilde{{\bf y}}_j}]_{i,j=1}^{\frac{n-1}{2}}$ are both in $\ronetzero$ and they have at least one zero entry. Consequently, by Lemmas~\ref{lemma2}~\eqref{lemma2i}, \ref{lema-pomozna}, and \eqref{eq67}, there exist linearly independent ${\bf w}_1,\ldots, {\bf w}_{\frac{n-1}{2}}\in\FF_2^n$, linearly independent  $\widetilde{{\bf w}}_1,\ldots, \widetilde{{\bf w}}_{\frac{n-1}{2}}\in\FF_2^n$, and permutation matrices $Q,\widetilde{Q}\in GL_{\frac{n-1}{2}}(\FF_2)$ such that $\langle {\bf w}_1,\ldots, {\bf w}_{\frac{n-1}{2}}\rangle=\langle\underline{{\bf y}_1},\ldots,\underline{{\bf y}_{\frac{n-1}{2}}}\rangle$, $\langle \widetilde{{\bf w}}_1,\ldots, \widetilde{{\bf w}}_{\frac{n-1}{2}}\rangle=\langle\underline{\widetilde{{\bf y}}_1},\ldots,\underline{\widetilde{{\bf y}}_{\frac{n-1}{2}}}\rangle$,
\begin{equation}\label{eq68}
{\bf w}_1+\cdots+{\bf w}_{\frac{n-1}{2}}+\underline{{\bf y}_{\frac{n+1}{2}}}={\bf j}_{n}=\widetilde{{\bf w}}_1+\cdots+\widetilde{{\bf w}}_{\frac{n-1}{2}}+\underline{\widetilde{{\bf y}}_{\frac{n+1}{2}}}
\end{equation}
and
$$[{\bf w}_i^{\tr} {\bf w}_j]_{i,j=1}^{\frac{n-1}{2}}=Q\left(\begin{array}{cc}J_{2\times 2} & O\\
O&O\end{array}\right)Q^{\tr},\quad [\widetilde{{\bf w}}_i^{\tr} \widetilde{{\bf w}}_j]_{i,j=1}^{\frac{n-1}{2}}=\widetilde{Q}\left(\begin{array}{cc}J_{2\times 2} & O\\
O&O\end{array}\right)\widetilde{Q}^{\tr}.$$
Hence, by permuting the indices in $\widetilde{{\bf w}}_1,\ldots, \widetilde{{\bf w}}_{\frac{n-1}{2}}$ we may assume that ${\bf w}_i^{\tr} {\bf w}_j=\widetilde{{\bf w}}_i^{\tr} \widetilde{{\bf w}}_j$ for all $i,j$. If we define ${\bf z}_i=\overline{{\bf w}}_i$, $\widetilde{{\bf z}}_i=\overline{\widetilde{{\bf w}}_i}$ for $i\leq \frac{n-1}{2}$ and ${\bf z}_{\frac{n+1}{2}}={\bf y}_{\frac{n+1}{2}}$, $\widetilde{{\bf z}}_{\frac{n+1}{2}}=\widetilde{{\bf y}}_{\frac{n+1}{2}}$, then \eqref{eq65} is automatically guaranteed for $i,j\leq \frac{n-1}{2}$, while~\eqref{eq62} covers the case $i=\frac{n+1}{2}=j$. Moreover, $\{{\bf z}_1,\ldots,{\bf z}_{\frac{n+1}{2}}\}\in \mathfrak{B}_C$, $\{\widetilde{{\bf z}}_1,\ldots,\widetilde{{\bf z}}_{\frac{n+1}{2}}\}\in \mathfrak{B}_{\widetilde{C}}$, and \eqref{eq68} implies \eqref{eq64}. Lastly, the equality~\eqref{eq64} for $i\leq \frac{n-1}{2}$ implies that
\begin{align*}
\underline{{\bf z}_i}^{\tr}\underline{{\bf z}_{\frac{n+1}{2}}}&=\underline{{\bf z}_i}^{\tr}\left(\underline{{\bf z}_1}+\cdots+\underline{{\bf z}_{\frac{n-1}{2}}}+{\bf j}_{n}\right)\\
&=\underline{{\bf z}_i}^{\tr}\left(\underline{{\bf z}_1}+\cdots+\underline{{\bf z}_{\frac{n-1}{2}}}+\underline{{\bf z}_i}\right)\\
&=\underline{\widetilde{{\bf z}}_i}^{\tr}\left(\underline{\widetilde{{\bf z}}_1}+\cdots+\underline{\widetilde{{\bf z}}_{\frac{n-1}{2}}}+\underline{\widetilde{{\bf z}}_i}\right)\\
&=\underline{\widetilde{{\bf z}}_i}^{\tr}\left(\underline{\widetilde{{\bf z}}_1}+\cdots+\underline{\widetilde{{\bf z}}_{\frac{n-1}{2}}}+{\bf j}_{n}\right)\\
&=\underline{\widetilde{{\bf z}}_i}^{\tr}\underline{\widetilde{{\bf z}}_{\frac{n+1}{2}}},
\end{align*}
which completes the proof of the claim~\eqref{eq64}-\eqref{eq65}.
\end{myenumerate}

Consider the linear map $\sigma: \langle\underline{{\bf z}_1},\ldots,\underline{{\bf z}_{\frac{n+1}{2}}}\rangle\to\FF_2^n$ defined by $\sigma\left(\underline{{\bf z}_i}\right)=\underline{\widetilde{{\bf z}}_i}$ for all $i$. By~\eqref{eq64}-\eqref{eq65}, $\sigma({\bf j}_n)={\bf j}_n$ and \eqref{eq24} is satisfied for all ${\bf u}_1,{\bf u}_2\in \langle\underline{{\bf z}_1},\ldots,\underline{{\bf z}_{\frac{n+1}{2}}}\rangle$. By Lemma~\ref{witt-osnovna}, $\sigma$ can be linearly and injectively extended on $\FF_2^n$ such that \eqref{eq24} holds for all ${\bf u}_1,{\bf u}_2\in\FF_2^n$. Consequently, $\sigma({\bf x})=P{\bf x}$ for all ${\bf x}\in\FF_2$ where $P\in GL_n(\FF_2)$ has $\sigma({\bf e}_i)$ as its $i$-th column. Since the $(i,j)$-th entry of $P^{\tr}P$ equals $\sigma({\bf e}_i)^{\tr}\sigma({\bf e}_j)={\bf e}_i^{\tr}{\bf e}_j$ for all $i,j$, it follows that $P$ is orthogonal. Since,
$\langle P \underline{{\bf z}_1},\ldots, P \underline{{\bf z}_{\frac{n+1}{2}}} \rangle=\langle \underline{\widetilde{{\bf z}}_1},\ldots,\underline{\widetilde{{\bf z}}_{\frac{n+1}{2}}}\rangle$, we deduce that $\widetilde{C}=\langle \widetilde{{\bf z}}_1,\ldots,\widetilde{{\bf z}}_{\frac{n+1}{2}}\rangle=\langle \overline{P\underline{{\bf z}_1}},\ldots,\overline{P\underline{{\bf z}_{\frac{n+1}{2}}}}\rangle=\langle(P\oplus 1){\bf z}_1,\ldots,(P\oplus 1){\bf z}_{\frac{n+1}{2}}\rangle=(P\oplus 1)C$.
\end{proof}

By Theorem~\ref{thm-code}, the subgroup $\{P\oplus 1 : P\in {\cal O}_n(\FF_2)\}$ in ${\cal O}_{n+1}(\FF_2)$, which is isomorphic to ${\cal O}_n(\FF_2)$, acts transitively on the set of all self-dual codes in $\FF_2^{n+1}$. This improves the result of Janusz~\cite[Theorem~10]{janusz}, which states that ${\cal O}_{n+1}(\FF_2)$ acts transitively. Some generators of ${\cal O}_n(\FF_2)$ can be found in~\cite{janusz,sole}.

As we saw in this paper, graph $\Gamma_n$ contains a lot of information about binary (self-dual) codes. We expect that a deeper graph theoretical analysis of $\Gamma_n$ could provide new insights in coding theory in the future.\bigskip

\noindent{\bf Acknowledgements.} The work of Marko Orel is supported in part by the Slovenian Research and Innovation Agency (research program P1-0285 and research projects N1-0140, N1-0208, N1-0210, J1-4084, N1-0296 and J1-50000). The work of Dra\v{z}enka Vi\v{s}nji\'{c} is supported in part by the Slovenian Research and Innovation Agency (research program P1-0285 and Young Researchers Grant).

\end{document}